\documentclass[11pt,reqno]{amsart}
\usepackage{amsmath, amsthm, amssymb, amsfonts}
\usepackage{bbm}
\usepackage{bm}
\usepackage{graphicx}
\usepackage{xcolor}
\usepackage{enumitem}
\usepackage{tikz}
\usepackage{tikz-cd}
\usepackage{pgfplots}
\usepackage{rotating}
\usepackage{hyperref}
\usepackage{cleveref}

\pgfplotsset{compat=1.18}

\theoremstyle{plain}

\usepackage[backend=biber,sorting=nty,giveninits=true]{biblatex}

\addbibresource{references.bib}

\numberwithin{equation}{section}

\newtheorem{theorem}{Theorem}[section]
\newtheorem{lemma}[theorem]{Lemma}
\newtheorem{proposition}[theorem]{Proposition}
\newtheorem{corollary}[theorem]{Corollary}
\newtheorem{claim}{Claim}

\theoremstyle{definition}
\newtheorem{definition}[theorem]{Definition}
\newtheorem{notation}[theorem]{Notation}

\newtheorem{prob}[theorem]{Problem}

\newcommand{\func}{\operatorname}

\renewcommand{\c}[1]{\langle #1 \rangle}
\newcommand{\bigc}[1]{\left\langle #1 \right\rangle}

\newcommand{\bb}[1]{\left( #1 \right)}
\renewcommand{\ss}[1]{\left\{ #1 \right\}}
\renewcommand{\mod}[1]{\left| #1 \right|}
\newcommand{\R}{\mathbb{R}}

\renewcommand{\P}{\mathbb{P}}
\newcommand{\Po}{\cal{P}}
\newcommand{\N}{\mathbb{N}}
\newcommand{\F}{\cal{F}}

\renewcommand{\bar}[1]{\overline{#1}}


\newcommand{\iffbreak}{\phantom{{} \iff {}}}

\newcommand{\cal}{\mathcal}
\renewcommand{\frak}{\mathfrak}

\renewcommand{\bf}{\mathbf}



\newcommand{\supp}{\func{supp}}

\newcommand{\m}[1]{\begin{matrix} #1 \end{matrix}}


\newcommand{\ZFC}{\mathsf{ZFC}}

\newcommand{\CH}{\mathsf{CH}}

\newcommand{\MA}{\mathsf{MA}}

\newcommand{\dom}{\func{dom}}

\newcommand{\A}{\cal{A}}
\newcommand{\B}{\cal{B}}

\newcommand{\non}{\func{non}}

\newcommand{\forces}{\Vdash}

\newcommand{\restrictedto}{\mathord{\upharpoonright}}

\newcommand{\Coll}{\func{Coll}}

\newcommand{\M}{\mathcal{M}}

\newcommand{\FIN}{\bf{FIN}}
\newcommand{\U}{\cal{U}}
\newcommand{\V}{\bf{V}}

\newcommand{\X}{\cal{X}}

\renewcommand{\H}{\cal{H}}
\newcommand{\I}{\cal{I}}
\newcommand{\lh}{\func{lh}}

\renewcommand{\O}{\cal{O}}
\newcommand{\depth}{\func{depth}}

\newcommand{\fin}{\mathrm{fin}}
\newcommand{\D}{\cal{D}}

\newcommand{\E}{\cal{E}}

\newcommand{\Lim}{\func{Lim}}
\newcommand{\avec}{\frak{a}_{\mathrm{vec},\mathbb{F}}}

\let\strokeL\L
\DeclareRobustCommand{\L}{\ifmmode\mathbf{L}\else\strokeL\fi}

\let\Sec\S
\renewcommand{\S}{\mathcal{S}}

\newenvironment{midproof}[1][Proof]
  {\begin{proof}[#1]}
  {\end{proof}}

\makeatletter
\DeclareRobustCommand\bigop[1]{%
  \mathop{\vphantom{\sum}\mathpalette\bigop@{#1}}\slimits@
}
\newcommand{\bigop@}[2]{%
  \vcenter{%
    \sbox\z@{$#1\sum$}%
    \hbox{\resizebox{\ifx#1\displaystyle.9\fi\dimexpr\ht\z@+\dp\z@}{!}{$\m@th#2$}}%
  }%
}
\makeatother

\DeclareFontShape{OT1}{cmr}{bx}{sc}{<-> cmbcsc10}{}

\emergencystretch=1em
\raggedbottom
\title[Full mad families of vector spaces]{Full mad families of vector spaces and two local Ramsey theories}
\author{Clement Yung}

\begin{document}

\begin{abstract}
    Let $E$ be a vector space over a countable field of dimension $\aleph_0$. Two infinite-dimensional subspaces $V,W \subseteq E$ are \emph{almost disjoint} if $V \cap W$ is finite-dimensional. This paper provides some improvements on results about the definability of maximal almost disjoint families (mad families) of subspaces in \cite{S19}. We construct a full mad family of block subspaces in $\ZFC$, answering a problem by Smythe in the positive. A variant of this construction shows that there exists a completely separable mad family of block subspaces in $\ZFC$. We also discuss the abstract Mathias forcing introduced by Di Prisco-Mijares-Nieto in \cite{DMN15}, and apply it to show that in the Solovay's model obtained by the collapse of a Mahlo cardinal, there are no full mad families of subspaces over $\mathbb{F}_2$.
\end{abstract}

\maketitle

\section{Introduction}
\label{sec:intro}

Let $E$ be a vector space over a countable field $\mathbb{F}$ of dimension $\aleph_0$. In \cite{S19}, Smythe studied \emph{almost disjoint families} of infinite-dimensional subspaces $V,W \subseteq E$, i.e. families of infinite-dimensional subspaces of $E$ whose pairwise intersections are finite-dimensional, and raised the following questions.
\begin{prob}
\label{prob:definable.mad.families}
    Can mad families of subspaces be definable? In particular:
    \begin{enumerate}
        \item Are there analytic mad families of subspaces?
        
        \item Assuming the existence of large cardinals, are there mad families of subspaces in $\L(\R)$?
    \end{enumerate}
\end{prob}

The study of almost disjoint families in various linear algebraic notions, or applications of the theory of almost disjoint families to linear algebra or functional analysis, predates Smythe's work in \cite{S19}. In \cite{BS91}, Baumgartner-Spinas studied the existence of quadratic spaces, and used almost disjoint families in $[\omega]^\omega$ to show that under the assumption $\frak{b} = \aleph_1$, there exists an $\aleph_1$-dimensional ($**$)-space over a class of countable fields. Possible cardinalities of variants of almost disjoint families, including \emph{almost orthogonal families}, of infinite-dimensional subspaces of a separable Hilbert space were also studied in \cite{B11} and \cite{W08}. Kolman also studied almost disjoint families of infinite-dimensional subspaces with no additional topological assumptions in \cite{K00} (which he called \emph{almost disjoint packings}), but he assumed that almost disjoint families satisfy a property stronger than pairwise almost disjointness (see also footnote 2 of \cite{S19}).

Horowitz-Shelah proved in \cite{HS22} that there are no analytic mad families of subspaces over $\mathbb{F}_2$, but the first problem remains open for other fields. Smythe remarked in \cite{S19} that if there is a supercompact cardinal, then every set in $\L(\R)$ is $\H$-strategically Ramsey, so there are no full mad families of subspaces there (the definition of a \emph{full} mad family of subspaces will be introduced in the subsequent paragraph). Strategically Ramsey subsets of $E^{[\infty]}$ were introduced in \cite{R09} and \cite{R10} to study the combinatorics of Banach space theory and dichotomies, and Smythe introduced a localised version of the strategically Ramsey property in \cite{S18}. To the author's knowledge, the precise large cardinal strength needed for the non-existence of mad families of subspaces in $\L(\R)$ is mostly unknown.

Inspired by Mathias' original proof of the non-existence of analytic mad families in $[\omega]^\omega$ in \cite{M77}, Smythe approached this problem using a local Ramsey theoretic approach in \cite{S19}, replacing $\H$-Ramsey with $\H$-strategically Ramsey. Given an almost disjoint family $\A$ of vector spaces, he extended Mathias' definition of the ``happy family'' $\mathfrak{F}_\A$ of subsets of $\omega$, to a ``family'' $\H(\A)$ of subspaces of $E$. His approach requires $\H(\A)$ to be \emph{full}, a property which closely resembles the pigeonhole property of coideals of $[\omega]^\omega$. Calling a mad family $\A$ \emph{full} if $\H(\A)$ is full, Smythe proved the following theorem:

\begin{theorem}[Smythe \cite{S19}]
\label{thm:mad.not.analytic}
    There exist no analytic full mad families of subspaces.
\end{theorem}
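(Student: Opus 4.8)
The plan is to follow Mathias's original blueprint for proving the non-existence of analytic mad families on $[\omega]^\omega$, but transported to the setting of block subspaces via Smythe's fullness machinery. The key structural input is that fullness of $\H(\A)$ gives a pigeonhole-type property, which is exactly what lets one run a Mathias-style genericity/diagonalization argument. So the high-level strategy is: suppose toward a contradiction that $\A$ is an analytic \emph{full} mad family of block subspaces, show that analyticity forces $\A$ (equivalently, the relevant subspaces) to be $\H(\A)$-strategically Ramsey, and then use strategic Ramsey-ness together with fullness to build a block subspace that is almost disjoint from every member of $\A$, contradicting maximality.

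First I would recall the local Ramsey-theoretic framework: $\H(\A)$ is a family of block subspaces, and the relevant game is the infinite-dimensional asymptotic/block game of Rosendal (from \cite{R08}, \cite{R09}), localized to $\H(\A)$ as in \cite{S18}. The crucial lemma to establish is that every analytic (indeed every set with the Baire property in the appropriate topology) subset of $E^{[\infty]}$ is $\H(\A)$-strategically Ramsey, which is the vector-space analogue of the Galvin--Prikry / Silver theorem. Granting this, I would apply it to the set coded by the analytic mad family $\A$ itself (or to a naturally associated analytic set measuring ``almost disjointness from $\A$''). Strategic Ramsey-ness yields a block subspace $Y \in \H(\A)$ such that the whole asymptotic game restricted below $Y$ lands on one side of the partition — i.e. either every block subspace of $Y$ is ``almost contained'' in the family's union or none is.

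The heart of the argument, and where fullness does the real work, is the diagonalization step. Here I would use that $\A$ is mad to rule out the ``positive'' side: if every block subspace of $Y$ were captured by $\A$, one could extract from $Y$ a single block subspace meeting some member of $\A$ infinite-dimensionally, which is fine, but maximality must be leveraged against the constructed diagonal space. Concretely, using fullness (the pigeonhole property of $\H(\A)$) I would build, by a fusion argument along a carefully chosen sequence of block vectors, an infinite-dimensional block subspace $Z \leq Y$ whose intersection with each enumerated member of $\A$ is finite-dimensional. The pigeonhole/fullness property is what guarantees at each finite stage that we can choose the next block vector avoiding a prescribed finite-dimensional ``trace'' while staying inside $\H(\A)$; the fusion then produces a genuinely infinite-dimensional almost-disjoint $Z$, contradicting maximality of $\A$.

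The main obstacle I anticipate is precisely the combinatorial fusion in the last step: unlike the $[\omega]^\omega$ case, where one diagonalizes against countably many sets by a simple back-and-forth on integers, here each member of $\A$ is an infinite-dimensional subspace and ``almost disjoint'' means finite codimension of intersection, so the bookkeeping must simultaneously control linear dependence across all previously chosen block vectors and all enumerated members of $\A$. Making the fusion converge to an \emph{infinite}-dimensional block subspace — rather than stalling at a finite-dimensional one — requires the fullness hypothesis in an essential way, and verifying that the strategic-Ramsey output interfaces correctly with the fusion (so that the constructed $Z$ genuinely lies in $\H(\A)$ and witnesses the contradiction) is the delicate point. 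I expect the analyticity hypothesis to enter only through the strategic Ramsey lemma, so that the real content is entirely in the interplay between fullness and the block-subspace fusion.
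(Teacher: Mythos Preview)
This theorem is not proved in the present paper --- it is cited as Smythe's result from \cite{S19}, so there is no ``paper's own proof'' to compare against here. That said, the paper does describe Smythe's strategy (analytic $\Rightarrow$ $\H(\A)$-strategically Ramsey, then contradiction), and it carries out the analogous argument in full for the $\L(\R)$ case in the proof of Corollary~\ref{cor:no.full.mad.family.in.L(R)}. Your high-level plan matches that description, but your final step is off.

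The concrete problem is your diagonalization paragraph. You propose to ``enumerate'' members of $\A$ and build, by fusion, a block subspace $Z$ almost disjoint from each enumerated member. But $\A$ is a mad family, hence uncountable (indeed $|\A| \geq \avec \geq \non(\M) \geq \aleph_1$), so no such enumeration exists, and a countable fusion cannot defeat all of $\A$. This is not a bookkeeping issue you can fix; it is the wrong mechanism for the contradiction.

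The actual contradiction, visible in the paper's proof of Corollary~\ref{cor:no.full.mad.family.in.L(R)}, is much more direct and does not build any almost-disjoint $Z$. One applies the Ramsey-type theorem not to $\A$ but to its $\leq$-downward closure $\bar{\A} = \{B : B \leq A \text{ for some } A \in \A\}$, obtaining some $C \in \H(\A)$ with $[\emptyset,C] \subseteq \bar{\A}$ or $[\emptyset,C] \subseteq \bar{\A}^c$. The first alternative fails because $C \in \H(\A)$ forces $C \notin \bar{\A}$; the second fails because maximality of $\A$ gives some $D \leq C$ with $D \leq A \in \A$, so $D \in [\emptyset,C] \cap \bar{\A}$. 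Fullness enters earlier, in making $\H(\A)$ well-behaved enough for the (strategically) Ramsey theorem to apply, not in a terminal fusion step.
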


Smythe proved this theorem by studying the combinatorial properties of families of \emph{block subspaces}. Block subspaces are subspaces with a basis of vectors with strictly increasing support (see \Sec\ref{sec:setup} for a precise definition). Block subspaces are more combinatorially tame, and since every infinite-dimensional subspace contains an infinite-dimensional block subspace, an almost disjoint family of subspaces is maximal with respect to block subspaces iff it is maximal. 

While it is clear from Smythe's proof that the fullness property plays an important role in understanding the definability of mad families of subspaces, the extent which mad families of subspaces satisfy this property remains poorly understood. In particular, Smythe asked the following question in \cite{S19}:

\begin{prob}[Smythe \cite{S19}]
\label{prob:full.mad.families}
    Does $\ZFC$ prove the existence of a full mad family of subspaces? Even more, does $\ZFC$ prove that every mad family of subspaces is full?
\end{prob}

In the same paper, Smythe proved that assuming $\frak{p} = \frak{c}$, there exists a full mad family of block subspaces.

Since every infinite-dimensional subspace contains an infinite-dimensional block subspace, not all generality is lost when restricting our attention to block subspaces. However, several open problems concerning differences between mad families of subspaces and of block subspaces remain. Consider the following two cardinals:
\begin{align*}
    \avec^* &:= \min\{|\A| : \text{$\A$ is an infinite mad family of subspaces over $\mathbb{F}$}\}, \\
    \avec &:= \min\{|\A| : \text{$\A$ is an infinite mad family of block subspaces over $\mathbb{F}$}\}.
\end{align*}
It is unknown if they are equal in $\ZFC$.

\begin{prob}[Smythe, \cite{S19}]
\label{prob:avec*.and.avec}
    Do we lose any generality when restricting mad families to block subspaces? In particular, is $\avec^* = \avec$ true in $\ZFC$?
\end{prob}

The first result in the paper answers the first part of Problem \ref{prob:definable.mad.families} in the positive.

\begin{theorem}
\label{thm:main.full.mad.family}
    There exists a full mad family of block subspaces.
\end{theorem}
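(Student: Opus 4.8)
The plan is to build the family by transfinite recursion of length $\mathfrak{c} = |E^{[\infty]}|$, but to organise the recursion as a \emph{fusion} along a tree of block subspaces, so that the hypothesis $\mathfrak{p} = \mathfrak{c}$ used in Smythe's construction can be dispensed with. The starting observation is that, since $\mathbb{F}$ is countable, the set of \emph{finite} block sequences is countable; hence every requirement that refers only to finite data below a fixed block subspace — in particular, every finite colouring of the block vectors below it — is a countable object. Fullness of $\H(\A)$ is, I read it, exactly the assertion that $\H(\A)$ inherits a one-dimensional pigeonhole: for every $V \in \H(\A)$ and every finite colouring of the block vectors below $V$, there is a monochromatic $W \subseteq V$ with $W \in \H(\A)$. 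Note that, unlike the case of $[\omega]^\omega$ where the one-dimensional pigeonhole for a coideal is automatic, here the bare existence of a monochromatic block subspace already requires a genuine theorem (Gowers' block Ramsey theorem, or Hindman's theorem over $\mathbb{F}_2$), which is available in $\ZFC$. The entire construction is therefore designed to produce a mad family $\A$ whose $\H(\A)$ is rich enough to retain such monochromatic witnesses.

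First I would fix an enumeration $\langle V_\alpha : \alpha < \mathfrak{c}\rangle$ of all infinite-dimensional block subspaces and an enumeration $\langle (W_\alpha, c_\alpha) : \alpha < \mathfrak{c}\rangle$ of all pairs consisting of a block subspace $W$ and a finite colouring $c$ of the block vectors below $W$ (there are $\mathfrak{c}$ such pairs). I would then recursively build an increasing chain of almost disjoint families $\A_\alpha$ together with a decreasing fusion sequence of ``reservoir'' block subspaces. At stage $\alpha$ two demands must be met: (maximality) if $V_\alpha$ is still almost disjoint from $\A_\alpha$, add an infinite-dimensional block subspace $B_\alpha \subseteq V_\alpha$ almost disjoint from every member of $\A_\alpha$, so that $V_\alpha$ is no longer almost disjoint from the eventual family; and (fullness) for the pair $(W_\alpha, c_\alpha)$, provided $W_\alpha$ will remain positive, apply Gowers' theorem to extract a $c_\alpha$-monochromatic $W_\alpha' \subseteq W_\alpha$ and \emph{commit} $W_\alpha'$ to staying in $\H(\A_\beta)$ for all later $\beta$.

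The heart of the matter, and the main obstacle, is reconciling these two demands without $\mathfrak{p} = \mathfrak{c}$. Every maximality step enlarges $\A$ and hence \emph{shrinks} $\H(\A)$, threatening to expel the witnesses $W_\alpha'$ that fullness has committed to. The device I would use is to carry out all choices below the tail of a single fusion sequence of block subspaces: every future $B_\beta$ and every future monochromatic witness is drawn from the current fusion tail, and each committed $W_\alpha'$ is chosen almost disjoint from (or almost contained in a fixed member of) the pieces scheduled for later addition. Because finite block sequences are countable, each commitment is only a \emph{countable} constraint on the remaining construction, so a standard fusion suffices to honour it — whereas Smythe, facing towers of size $|\alpha|$, needed pseudo-intersections supplied by $\mathfrak{p} = \mathfrak{c}$. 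This is precisely where the cardinal hypothesis is eliminated, and it is the same mechanism that I expect the ``variant'' yielding a completely separable family to exploit.

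Finally I would verify the three closure conditions on $\A = \bigcup_{\alpha<\mathfrak{c}} \A_\alpha$: almost disjointness (immediate from the step conditions and the induction hypothesis), maximality (every $V_\alpha$ was handled), and fullness of $\H(\A)$ (every pair $(W_\alpha,c_\alpha)$ with $W_\alpha$ positive received a surviving monochromatic witness). The most delicate verification is the last: one must check that the fusion genuinely keeps each committed $W_\alpha'$ positive, i.e.\ that $W_\alpha'$ is never almost covered by finitely many members of the final $\A$, which is where the almost-disjointness/almost-containment discipline imposed during the fusion does its work. I expect the technical core of the proof to be a bookkeeping lemma certifying that the maximality requirements and the fullness commitments can be scheduled on one and the same fusion tree, together with the verification that Gowers' monochromatic subspaces can always be taken below the current fusion tail.
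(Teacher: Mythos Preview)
Your proposal has a genuine gap at precisely the point where you claim to eliminate Smythe's hypothesis. You assert that ``each commitment is only a \emph{countable} constraint on the remaining construction, so a standard fusion suffices''; but a fusion has length $\omega$, while at stage $\alpha$ you have $|\alpha|$ many prior commitments $W'_\beta$ ($\beta<\alpha$) to keep in $\H(\A)$ \emph{and} $|\alpha|$ many members of $\A_\alpha$ from which the new element must be almost disjoint. You never explain how a single fusion, or a fusion per stage, handles these $|\alpha|$-sized families. The sentence ``finite block sequences are countable'' is true but irrelevant: the obstruction is not the local combinatorics below a fixed block subspace, it is the global bookkeeping across $\mathfrak c$ stages, and that is exactly what forced Smythe to use $\mathfrak p=\mathfrak c$.

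The paper's proof replaces $\mathfrak p=\mathfrak c$ by an entirely different mechanism that your proposal does not anticipate: a \emph{block-splitting family} $\{Z_\xi:\xi<\mathfrak{bs}\}$ (where $\mathfrak{bs}=\max\{\mathfrak b,\mathfrak s\}\leq\avec$) is fixed in advance, and to each stage $\alpha$ one attaches a code $\sigma_\alpha\in 2^{<\mathfrak{bs}}$ recording on which side of each $Z_\xi$ the element $A_\alpha$ lives. Almost disjointness of $A_\alpha$ from $A_\beta$ is then obtained \emph{for free} whenever $\sigma_\alpha$ and $\sigma_\beta$ are $\sqsubseteq$-incomparable; only the $<\mathfrak{bs}$ many $\beta$ with $\sigma_\beta\sqsubseteq\sigma_\alpha$ require direct diagonalisation, and those can be handled because $\mathfrak{bs}\leq\avec$. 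At stage $\alpha$ one grows a binary tree of candidates in $\H^-(\A_\alpha)$ using the $Z_\xi$, chooses a branch avoiding all previous $\sigma_\beta$, and diagonalises using only the $(p)$-property of $\H^-(\A_\alpha)$ (not of a tower of length $|\alpha|$). Fullness is secured not by ``committing'' a witness and protecting it forever, but by placing an entire $\omega$-sized almost disjoint family $\{A_{\alpha+n}:n<\omega\}$ below $A_\alpha'$ at one stroke, so that $A_\alpha'\in\H(\A)$ automatically. A crucial technical input you do not mention is Lemma~\ref{lem:block.subspaces.intersection.lemma}, that $\langle A\rangle\cap\langle B\rangle$ is itself a block subspace; without it one could not guarantee the $|\B|<\avec$ family at stage $\alpha$ is an almost disjoint family of \emph{block} subspaces inside $\langle D\rangle$, and the appeal to $\avec$ would fail.

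Finally, your reading of fullness as ``a one-dimensional pigeonhole for finite colourings, via Gowers'' is not quite right. Fullness (Definition~\ref{def:full.semicoideal}) concerns arbitrary $\H$-dense $Y\subseteq E$, and the required witness is an element of $\H$ with span inside $Y$; no Gowers-type dichotomy is invoked anywhere in the construction.
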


Our method is inspired by a variant of the construction of a completely separable mad family in $[\omega]^\omega$ in \cite{MRS14} assuming $\frak{s} \leq \frak{a}$, provided by Guzm\'{a}n in \cite{G21}. Construction of completely separable mad families in $[\omega]^\omega$ has been extensively studied in the past two decades. Furthermore, complete separable mad families in $[\omega]^\omega$ have proved to be a powerful tool for constructing other set-theoretic objects. These include objects of interest in functional analysis, such as a masa in the Calkin algebra which does not lift to a masa of $B(H)$ in \cite{SS11}. Shelah attained a breakthrough in \cite{Sh11}, where he showed that a completely separable mad family exists assuming either $\frak{c} < \aleph_\omega$, or there is no inner model with a measurable cardinal. However, the existence of a completely separable mad family in $\ZFC$ alone remains open. We shall show that the natural generalisation of a completely separable mad family to vector spaces (see Definition \ref{def:cs.mad.family}) exists in $\ZFC$.

\begin{theorem}
\label{thm:cs.mad.family}
    There exists a completely separable mad family of block subspaces.
\end{theorem}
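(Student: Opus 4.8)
The plan is to adapt the construction in Theorem \ref{thm:main.full.mad.family} so that the resulting mad family is \emph{completely separable} rather than merely full. Since the preceding theorem already builds a full mad family of block subspaces by a variant of the Guzmán-style construction from \cite{G21,MRS14}, I would first isolate what additional combinatorial control the complete separability requirement (Definition \ref{def:cs.mad.family}) imposes, and then thread that control through the same transfinite recursion. In the classical setting over $[\omega]^\omega$, complete separability asks that for every set $X$ in the relevant coideal, either $X$ is covered by finitely many members of the family, or there is a member entirely (almost) contained in $X$. The vector-space analogue should demand: for every infinite-dimensional block subspace $W$, either $W$ is almost covered by finitely many members of $\A$, or some member $V \in \A$ satisfies $V \subseteq^* W$ (i.e. $V$ is almost contained in $W$). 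The construction must enumerate all block subspaces $W$ of $E$ in a length-$\frak{c}$ recursion and, at each stage, make a decision that secures exactly this dichotomy for the $W$ currently under consideration.

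The core of the argument, as in \cite{MRS14,G21}, is a bookkeeping recursion that simultaneously builds the family $\A = \{V_\alpha : \alpha < \frak{c}\}$ and, for each target block subspace $W_\alpha$, either commits $W_\alpha$ to the ``small'' side by ensuring it is trapped under finitely many previously chosen spaces, or else constructs a new almost disjoint block subspace $V_\alpha \subseteq^* W_\alpha$ to add to the family. First I would set up the enumeration and the invariants to be maintained: at stage $\alpha$, the partial family $\{V_\beta : \beta < \alpha\}$ should be almost disjoint, and I would track which block subspaces have already been ``decided''. The key technical device is a tree of finite-dimensional approximations to block subspaces together with a diagonalisation lemma showing that given a block subspace $W$ not almost covered by finitely many $V_\beta$, one can extract an infinite-dimensional block subspace of $W$ almost disjoint from all $V_\beta$ ($\beta < \alpha$). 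This is precisely where the block structure and the combinatorics developed for Theorem \ref{thm:main.full.mad.family} are essential: the strictly increasing supports allow a fusion-style argument, choosing successive block vectors whose supports avoid the finite-dimensional intersections accumulated so far.

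The main obstacle I anticipate is maintaining the \emph{completely separable} dichotomy as a genuine invariant through limit stages while keeping the family almost disjoint and, ultimately, \emph{maximal}. In the classical proof this is the delicate point: one must ensure that the ``small'' block subspaces (those assigned to the finite-cover side) cannot later conspire to violate maximality, and conversely that adding $V_\alpha \subseteq^* W_\alpha$ does not destroy almost disjointness with the $\frak{c}$-many spaces still to be added. I expect the resolution to mirror the invariant used in the full-mad-family construction — presumably a cardinal-characteristic inequality or a fusion argument guaranteeing enough ``room'' at each stage — so the work reduces to verifying that the stronger complete-separability decision can always be honoured within that same budget. Once the recursion is specified, showing maximality is routine (every block subspace is enumerated as some $W_\alpha$ and hence decided), and showing complete separability is immediate from the stagewise dichotomy; the nontrivial content is entirely in proving the extraction/diagonalisation lemma respects the block structure and in checking the limit-stage invariants, so I would devote the bulk of the proof to those two points.
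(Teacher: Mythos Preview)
Your high-level plan --- adapt the transfinite recursion from Theorem~\ref{thm:main.full.mad.family} with modified bookkeeping --- is exactly what the paper does, and the modification is in fact simpler than the full-mad-family case: at stage $\alpha$ one enumerates subsets $Y_\alpha \subseteq E$ (cofinally often), and if $Y_\alpha \in \I^+(\A_\alpha)$ one starts the tree $\{C_s : s \in 2^{<\omega}\}$ with $C_\emptyset \in \H^-(\A_\alpha)$ chosen so that $\c{C_\emptyset} \subseteq Y_\alpha$. The resulting $A_\alpha$ then lies inside $\c{Y_\alpha}$, and since $\I^+(\A) \subseteq \I^+(\A_\alpha)$ for every $\alpha$, complete separability follows immediately. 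There is no limit-stage difficulty and no separate ``dichotomy decision'' to maintain as an invariant.

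However, your description of the ``key technical device'' contains a genuine gap. You write that one extracts from $W$ an infinite-dimensional block subspace almost disjoint from \emph{all} $V_\beta$ with $\beta < \alpha$. This is precisely what cannot be done directly once $\alpha \geq \avec$, and the recursion runs to $\frak{c}$. The actual mechanism --- which you gesture at (``a cardinal-characteristic inequality'') but do not identify --- is the block-splitting family $\{Z_\alpha : \alpha < \frak{bs}\}$ from Lemma~\ref{lem:splitting.lemma} together with the labels $\sigma_\alpha \in 2^{<\frak{bs}}$. Each $A_\alpha$ is built so that $\c{A_\alpha} \setminus Z_\xi^{\sigma_\alpha(\xi)}$ is very small for every $\xi < \dom(\sigma_\alpha)$; hence whenever $\sigma_\beta$ and $\sigma_\alpha$ are incomparable, almost disjointness of $A_\alpha$ and $A_\beta$ is automatic. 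At stage $\alpha$ one therefore only needs to explicitly avoid the $A_\beta$ with $\sigma_\beta \sqsubseteq \sigma_\alpha$ together with the finite witnessing sets $\F_\xi$, a collection of size $< \frak{bs} \leq \avec$ (Lemma~\ref{lem:bs.leq.avec}); this is where Lemma~\ref{lem:block.subspaces.intersection.lemma} is invoked to see that the restricted family inside $\c{D}$ is still an almost disjoint family of block subspaces, so an $A_\alpha$ below $D$ avoiding it exists. Without this labelling trick the extraction step you describe simply fails, so your plan as stated would not go through at stages beyond $\avec$.
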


Our constructions for Theorem \ref{thm:main.full.mad.family} and \ref{thm:cs.mad.family} utilise various results from the local Ramsey theory of block subspaces developed by Smythe in \cite{S18} and \cite{S19}, along with a few additional results that we shall prove. The non-requirement of additional set-theoretic hypothesis is a result of the $\ZFC$ inequality $\non(\M) \leq \avec$ by Smythe in \cite{S19}. Since $\frak{s} \leq \non(\M)$ holds in $\ZFC$, the vector space variant of the inequality $\frak{s} \leq \frak{a}$, i.e. $\frak{s} \leq \avec$, is also a $\ZFC$ inequality. Readers may also refer to \cite{Rag09} and \cite{Rag10} for more constructions of mad families in $[\omega]^\omega$ involving the cardinal invariant $\non(\M)$. 

The second part in this paper addresses the existence of full mad families of block subspaces over $\mathbb{F}_2$ in Solovay's model. We utilise the abstract Mathias forcing developed by Di Prisco--Mijares--Nieto in \cite{DMN15} --- the abstract Mathias forcing was developed to expand the local Ramsey theory of $[\omega]^\omega$ to topological Ramsey spaces (i.e. closed triples $(\cal{R},\leq,r)$ satisfying the axioms \textbf{A1}-\textbf{A4} introduced by Todor\v{c}evi\'{c} in \cite{T10}). 

If $E$ is a countable vector space over $\mathbb{F}_2$, then the pigeonhole principle holds, so $(E^{[\infty]},\leq,r)$ is a topological Ramsey space. By identifying vectors in $E$ with their supports, this topological Ramsey space coincides with Hindman's space $(\FIN^{[\infty]},\leq,r)$ studied in \cite{B87}. Filters in $\FIN^{[\infty]}$ have been studied by Blass in \cite{B87}, and are known as \emph{ordered-union ultrafilters}. Additional overlaps between Blass' theory of ordered-union ultrafilters and Smythe's theory of filters in \cite{S18} and \cite{S23} include the notion of \emph{stable} ordered-union ultrafilters, which in Smythe's terminology is known as a full (p)-filter. This allows us to study the local Ramsey theory of infinite block sequences in $E$ from the following perspectives:
\begin{enumerate}
    \item Abstract local Ramsey theory by Di Prisco--Mijares--Nieto in \cite{DMN15}.
    
    \item Local Ramsey theory of block sequences by Smythe in \cite{S18} and \cite{S23}.
    
    \item The theory of (stable) ordered-union ultrafilters by Blass in \cite{B87}.
\end{enumerate}
In particular, the first two theories proposed versions of (semi)coideals and (ultra)filters in $E^{[\infty]}$, along with definitions of various selectivity properties. In \Sec\ref{sec:TRS.and.abstract.mathias.forcing} and \Sec\ref{sec:TRS.block.subspaces}, we shall show that these three theories coincide, so definitions may be used interchangeably and results proven in the individual theories may be freely applied to (semi)coideals and (ultra)filters in $E^{[\infty]}$.

Let $\Coll(\omega,<\!\kappa)$ be the L\'{e}vy collapse of uncountable cardinal $\kappa$ to $\omega_1$. Using the abstract Mathias forcing developed, we may apply the abstract Mathias forcing to obtain the following variant of a celebrated theorem of Mathias in \cite{M77}:
\begin{theorem}
\label{thm:mathias.thm.vector.spaces}
    Let $\kappa$ be a Mahlo cardinal, and let $G$ be a $\Coll(\omega,<\!\kappa)$-generic filter. Suppose that $E$ is a countable vector space over $\mathbb{F}_2$. If $\H \in \V[G]$ is a selective coideal on $E^{[\infty]}$, and $\X \in \L(\R)^{\V[G]}$ is a subset of $E^{[\infty]}$, then $\X$ is $\H$-Ramsey.
\end{theorem}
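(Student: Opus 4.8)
The plan is to follow Mathias' proof that in the L\'{e}vy collapse of a Mahlo cardinal every set of reals in $\L(\R)$ is completely Ramsey, transported to the abstract topological Ramsey space $E^{[\infty]}$. Since $E$ is a countable vector space over $\mathbb{F}_2$ the pigeonhole principle holds, so $(E^{[\infty]}, \leq, r)$ satisfies the axioms \textbf{A1}--\textbf{A4} of \cite{T10} and the abstract Mathias forcing $\mathbb{M}_{\H}$ of \cite{DMN15} is available. The first reduction is the standard forcing characterization of the local Ramsey property: $\X$ is $\H$-Ramsey precisely when for every condition $(a, A) \in \mathbb{M}_{\H}$ there is a \emph{pure} extension $(a, B)$, i.e. some $B \in \H$ with $B \leq A$, such that the basic set $[a, B]$ is either contained in $\X$ or disjoint from $\X$. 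Thus it suffices to prove this dichotomy for every $\X \in \L(\R)^{\V[G]}$.

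Two ingredients drive the argument. The first is combinatorial and provably holds in $\ZFC$: because $\H$ is a selective coideal, $\mathbb{M}_{\H}$ has the pure-decision (Prikry) property of \cite{DMN15}, so that any sentence of the forcing language can be decided by a pure extension, and the abstract Ellentuck theorem relative to $\H$ yields the dichotomy above for all Souslin-measurable (in particular analytic) $\X$. This reduces the problem to upgrading the base case from definable sets to all of $\L(\R)$. The second ingredient is the Solovay analysis of the collapse. Any $\X \in \L(\R)^{\V[G]}$ has the form $\{ s : \V[G] \models \varphi(s, c, \vec{\xi}) \}$ for a formula $\varphi$, a real parameter $c$, and ordinals $\vec{\xi}$, and $c$ appears at some bounded stage, say $c \in \V[G \restrictedto \alpha]$ with $\alpha < \kappa$.

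I would then exploit the Mahlo-ness of $\kappa$: since the inaccessibles below $\kappa$ are stationary, and since $\H$ is selective, below any condition $(a, A)$ one can find a pure refinement $(a, B)$ with $B \in \H$ lying in an intermediate model $\V[G \restrictedto \lambda]$ for some inaccessible $\lambda$ with $\alpha < \lambda < \kappa$ (selectivity provides representatives of $\H$ in intermediate models, and reflection places $\lambda$ past $\alpha$). Over $\V[G \restrictedto \lambda]$ the remainder of $G$ is a L\'{e}vy collapse of the interval, and Solovay's lemma for the inaccessible $\lambda$ shows that the set of $s \in [a, B]$ which are $\mathbb{M}_{\H \cap \V[G \restrictedto \lambda]}$-generic over $\V[G \restrictedto \lambda][c]$ is Ellentuck-comeager in $[a, B]$. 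By the homogeneity of $\Coll(\omega, <\!\kappa)$ the truth value of $\varphi(s, c, \vec{\xi})$ is constant across all such generic $s$, so $[a, B]$ is, modulo an Ellentuck-meager set, contained in $\X$ or in its complement. Finally I would invoke the Prikry property (equivalently, the abstract Ellentuck theorem applied to the meager error term) to thin $B$ to some $B' \in \H$ with $B' \leq B$ and $[a, B']$ entirely inside $\X$ or entirely inside its complement, yielding the desired pure decision, and hence that $\X$ is $\H$-Ramsey.

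The main obstacle is the reflection/absorption step linking the coideal $\H$, which lives in the full extension $\V[G]$, to the Solovay genericity argument, which must be run over an intermediate model. Concretely, one must show that selectivity of $\H$ lets us refine any condition to one whose witnessing set lies in some $\V[G \restrictedto \lambda]$ with $\lambda$ inaccessible, and that the abstract Mathias forcing relativized to $\H \cap \V[G \restrictedto \lambda]$ is genuinely captured by the tail collapse, so that Solovay's measurability-of-forcing lemma applies. This is exactly the point at which stationarily many inaccessibles (Mahlo), rather than a single inaccessible, is required, mirroring Mathias' original use of a Mahlo cardinal for the Ramsey property; verifying it in the vector-space Ramsey space $E^{[\infty]}$, together with checking that selectivity descends to the intermediate models, is where the real work lies.
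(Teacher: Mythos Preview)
Your outline has the right architecture (reflect to an intermediate model $\V[G\!\restrictedto\!\lambda]$ with $\lambda$ inaccessible, use homogeneity of the collapse, decide via Prikry, spread via the Mathias property), but it diverges from the paper at the central technical step and is imprecise exactly there.

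The paper does \emph{not} work with $\mathbb{M}_{\bar{\H}}$ for $\bar{\H}=\H\cap\V[G\!\restrictedto\!\lambda]$. Instead, over $\V[G\!\restrictedto\!\lambda]$ it first forces with the $\sigma$-distributive poset $(\bar{\H},\leq^*)$ to add a \emph{selective ultrafilter} $\U\subseteq\bar{\H}$ (Proposition~\ref{prop:semiselective.adds.selective.ultrafilter}), and only then runs the abstract Mathias forcing $\mathbb{M}_\U$. The payoff is Proposition~\ref{prop:M-generic.characterisation}: an $A_H$ is $\mathbb{M}_\U$-generic iff $A_H\leq^* A$ for every $A\in\U$. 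Since $\U$ is countable in $\V[G][\U]$, such an $A_H$ can be \emph{explicitly} constructed inside $[a,A']$ for any Prikry-deciding condition $(a,A')$, and the Mathias property then makes every $B\in[a,A_H]$ generic, giving the homogeneity of $[a,A_H]$ for $\X$ in one stroke.

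Your alternative, working directly with $\mathbb{M}_{\bar{\H}}$ and asserting that ``Solovay's lemma shows the $\mathbb{M}_{\bar{\H}}$-generics are Ellentuck-comeager in $[a,B]$,'' is not what Solovay's factor lemma says; that lemma only gives absorption of a single small forcing into the tail collapse, not any comeagerness statement. Your route can be made to work (apply Prikry first to get a deciding $(a,A')\in\mathbb{M}_{\bar{\H}}$, then use Solovay's lemma to find \emph{one} $\mathbb{M}_{\bar{\H}}$-generic $A_H\in[a,A']\cap\V[G]$, then invoke the Mathias property), but as written the order of the steps is muddled and the comeagerness claim is a red herring. The ultrafilter detour in the paper is precisely what replaces your vague ``generics are comeager'' with the clean, checkable criterion $A_H\leq^* A$ for all $A\in\U$.
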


A rough sketch of the proof of Theorem \ref{thm:mathias.thm.vector.spaces} (in the context of $\FIN$) was stated by Blass in \cite{B87}, and he stated that it ``will be shown, in a more general context'' in \cite{B88}, but the published version of \cite{B88} does not contain such a proof. 

As a consequence, we obtain the following corollary:
\begin{corollary}
\label{cor:no.full.mad.family.in.L(R)}
    Let $\kappa$ be a Mahlo cardinal, and let $G$ be a $\Coll(\omega,<\!\kappa)$-generic filter. Then there are no full mad families of subspaces over $\mathbb{F}_2$ in $\L(\R)^{\V[G]}$.
\end{corollary}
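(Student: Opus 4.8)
The plan is to derive Corollary~\ref{cor:no.full.mad.family.in.L(R)} from Theorem~\ref{thm:mathias.thm.vector.spaces} by adapting Mathias' classical argument for the non-existence of analytic (indeed $\L(\R)$-definable) mad families of $[\omega]^\omega$, transported to the block-subspace setting. The key conceptual point is that a full mad family $\A$ gives rise, via Smythe's construction, to the coideal $\H(\A)$ on $E^{[\infty]}$, and that fullness is precisely what is needed to make $\H(\A)$ a \emph{selective coideal}. So the first step is to verify that if $\A \in \L(\R)^{\V[G]}$ is a full mad family of block subspaces over $\mathbb{F}_2$, then $\H(\A)$ is a selective coideal lying in $\V[G]$ (it is definable from $\A$, hence in $\V[G]$ since $\L(\R)^{\V[G]} \subseteq \V[G]$). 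This lets us invoke Theorem~\ref{thm:mathias.thm.vector.spaces} with $\H = \H(\A)$.

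Next I would identify a suitable $\L(\R)$-definable set $\X \subseteq E^{[\infty]}$ whose $\H(\A)$-Ramseyness contradicts the maximality of $\A$. Following Mathias, the natural candidate is a set coding ``witnesses of almost disjointness'' or the set of block subspaces that are almost contained in some member of $\A$; concretely, one wants a set $\X$ such that no single $V \in \H(\A)$ can be homogeneous for $\X$ unless $\A$ fails to be either maximal or almost disjoint. By Theorem~\ref{thm:mathias.thm.vector.spaces}, such an $\X$ is $\H(\A)$-Ramsey, so there is $V \in \H(\A)$ with $V^{[\infty]} \subseteq \X$ or $V^{[\infty]} \cap \X = \emptyset$. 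The plan is then to show that each alternative is impossible: one alternative violates almost disjointness (a block subspace almost disjoint from all of $\A$ would extend $\A$, contradicting maximality), while the other forces $V$ to be almost contained in a member of $\A$ in a way incompatible with $V \in \H(\A)$.

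The main obstacle I anticipate is pinning down the correct definable set $\X$ and verifying that its dichotomy genuinely localises the failure of maximality to a single homogeneous block subspace. In the classical $[\omega]^\omega$ case this rests on the interplay between the happy family $\mathfrak{F}_\A$ and the ad family $\A$, and here the analogous interplay between $\H(\A)$ and $\A$ must be controlled using the specific combinatorics of block subspaces over $\mathbb{F}_2$ and the fullness hypothesis. In particular I would need Smythe's characterisation (from \cite{S19}) that $\H(\A)$ is a coideal exactly when $\A$ is full, together with the pigeonhole principle for countable vector spaces over $\mathbb{F}_2$ that makes $E^{[\infty]}$ a topological Ramsey space, to ensure the $\H(\A)$-Ramsey dichotomy is available and sharp.

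Finally, I would assemble the contradiction: assuming toward a contradiction that a full mad family $\A$ of block subspaces over $\mathbb{F}_2$ exists in $\L(\R)^{\V[G]}$, the two preceding steps produce a homogeneous $V \in \H(\A)$ both of whose alternatives are ruled out, so no such $\A$ exists. The restriction to $\mathbb{F}_2$ enters through Theorem~\ref{thm:mathias.thm.vector.spaces} and the pigeonhole principle, and the restriction to block subspaces is harmless by the remark following Theorem~\ref{thm:mad.not.analytic} that maximality with respect to block subspaces coincides with maximality. This completes the proof of the corollary modulo the two technical lemmas about $\H(\A)$ and the choice of $\X$.
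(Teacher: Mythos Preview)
Your plan is correct and matches the paper's proof essentially step for step: the paper takes $\X = \bar{\A} := \{B \in E^{[\infty]} : B \leq A \text{ for some } A \in \A\}$ (your ``block subspaces almost contained in some member of $\A$'' is the same up to the inessential $\leq$ vs.\ $\leq^*$), invokes Proposition~\ref{prop:H^-(A).is.selective} together with Lemma~\ref{lem:full.iff.coideal} to see that $\H(\A)$ is a selective coideal, and then applies Theorem~\ref{thm:mathias.thm.vector.spaces} to $\bar{\A}$. The dichotomy is dispatched exactly as you outline: $[\emptyset,C] \subseteq \bar{\A}$ fails since $C \in \H(\A)$ and $\H(\A) \cap \bar{\A} = \emptyset$, while $[\emptyset,C] \subseteq \bar{\A}^c$ fails since maximality gives some $D \leq C$ with $D \leq A \in \A$, hence $D \in \bar{\A}$.
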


Our paper is organised as follows:
\begin{enumerate}
    \item[(\Sec\ref{sec:setup})] We provide a concise introduction for various definitions and notations to block subspaces of $E$. We prove that finite intersections of block subspaces are also block subspaces (Lemma \ref{lem:block.subspaces.intersection.lemma}).
    
    \item[(\Sec\ref{sec:local.ramsey.theory})] We review the local Ramsey theory of block subspaces developed by Smythe, i.e. combinatorial properties of \emph{semicoideals} of block subspaces (which Smythe calls \emph{families}). We introduce the notion of a \emph{selective} semicoideal of block subspaces, a strengthening of the (p)-property. 
    
    \item[(\Sec\ref{sec:ad.families})] We introduce some definitions and basic results about almost disjoint families of subspaces. We show that $\avec^*$ is uncountable, addressing a remark by Smythe in \cite{S19}. We also show that the semicoideal $\H^-(\A)$ is selective for any almost disjoint family $\A$ of subspaces --- the non-requirement for $\A$ to be maximal is crucial in the proofs of Theorems \ref{thm:main.full.mad.family} and \ref{thm:cs.mad.family}.
    
    \item[(\Sec\ref{sec:full.mad.families})] We apply various lemmas and results proven in \Sec\ref{sec:local.ramsey.theory} and \Sec\ref{sec:ad.families} to provide detailed proofs of Theorems \ref{thm:main.full.mad.family} and \ref{thm:cs.mad.family}.
    
    \item[(\Sec\ref{sec:TRS.and.abstract.mathias.forcing})] We provide an overview of the axioms of topological Ramsey spaces \textbf{A1}-\textbf{A4} introduced by Todor\v{c}evi\'{c}, and the abstract local Ramsey theory developed in \cite{DMN15}. We show that given a closed triple $(\cal{R},\leq,r)$ satisfying \textbf{A1}-\textbf{A4}, if $\H \subseteq \cal{R}$ is semiselective then $\mathbb{M}_\H$ satisfies the Mathias property and the Prikry property. This was proven in \cite{DMN15}, but it was assumed that the almost reduction relation $\leq^*$ is transitive (which is not generally true, such as for the topological Ramsey space of strong subtrees). We provide a proof of this result without this assumption.
    
    \item[(\Sec\ref{sec:TRS.block.subspaces})] We study the intersection between the local Ramsey theory of block subspaces and abstract local Ramsey theory when considering block subspaces over $\mathbb{F}_2$. We show that various notions and results introduced in \Sec3 and \Sec6 coincide in the setting of block subspaces over $\mathbb{F}_2$ and provide a proof of Theorem \ref{thm:mathias.thm.vector.spaces} and Corollary \ref{cor:no.full.mad.family.in.L(R)}.
    
    \item[(\Sec\ref{sec:conclusion})] We conclude the paper with several further remarks and open problems.
\end{enumerate}

\section{Vector spaces and block subspaces}
\label{sec:setup}
We introduce some basic definitions and properties of vector spaces and block subspaces. Throughout this article, only non-zero vectors shall be considered (thus, for instance, for two subspaces $V,W$, we write $V \cap W = \emptyset$ if their intersection is the trivial subspace).

\subsection{Setup and notations}
Let $E$ be a vector space over a countable field $\mathbb{F}$, with a countable Hamel basis $(e_n)_{n<\omega}$. For each $x \in E$, we may write $x := \sum_{n<\omega} \lambda_n(x)e_n$, where $\lambda_n(x) \in \mathbb{F}$. This allows us to define the \emph{support} of a vector $x \in E$ by:
\begin{align*}
    \supp(x) := \{n < \omega : \lambda_n(x) \neq 0\}.
\end{align*}
Note that $\supp(x)$ is finite for all $x \in E$, as $(e_n)_{n<\omega}$ is a Hamel basis. This gives us a partial order $<$ on $E$, where for any two vectors $x,y \in E$:
\begin{align*}
    x < y \iff \max(\supp(x)) < \min(\supp(y)).
\end{align*}
A \emph{block sequence} of vectors in $E$ is thus a $<$-increasing sequence. Note that any block sequence of vectors is linearly independent. 

\begin{definition}
    An infinite-dimensional subspace $V \subseteq E$ is a \emph{block subspace} if $V$ has a \emph{block basis}, i.e. an infinite block sequence $(x_n)_{n<\omega}$ such that $V = \func{span}\{x_n : n < \omega\}$.
\end{definition}

Since such a block basis is unique up to scaling, we may conflate block subspaces of $E$ with infinite block sequences. This allows us to equip $E^{[\infty]}$ with a Polish topology by considering the first difference metric on infinite block sequences --- that is, if $(x_n)_{n<\omega}$ and $(y_n)_{n<\omega}$ are infinite block sequences, then:
\begin{align*}
    d((x_n)_{n<\omega},(y_n)_{n<\omega}) = 2^{-\min\{n : x_n \neq y_n\}}.
\end{align*}

\begin{notation}
\label{not:notations}
\hfill
    \begin{enumerate}
        \item We let $E^{[\infty]}$ (resp. $E^{[<\infty]}$) denote the set of infinite (resp. finite) block sequences of vectors in $E$. 

        \item $E^{[\leq\infty]} := E^{[\infty]} \cup E^{[<\infty]}$.

        \item If $A = (x_n)_{n<\omega} \in E^{[\infty]}$ and $N < \omega$, we write $A/N := (x_n)_{n \geq N}$\footnote{This notation differs from that introduced in \cite{S23}, where it was instead defined to be the tail sequence of $A$ consisting of vectors supported above $N$.}.
        
        \item If $A = (x_n)_{n<\omega} \in E^{[\infty]}$ and $n < \omega$, we write $r_n(A) := (x_i)_{i<n}$.
        
        \item If $a = (x_n)_{n<N},b = (y_m)_{m<M} \in E^{[<\infty]}$, we write $a < b$ if $x_{N-1} < y_0$.
        
        \item If $A \in E^{[\infty]}$ and $a \in E^{[<\infty]}$, we write $a < A$ if $a < r_1(A)$.
        
        \item If $A \in E^{[\infty]}$ and $a \in E^{[<\infty]}$ we write $d_A(a) := \min\{N < \omega : a < A/N\}$.

        \item If $A \in E^{[\infty]}$ and $a \in E^{[<\infty]}$, then $A/a := A/d_A(a)$.
        
        \item If $Y \subseteq E$, we write:
        \begin{align*}
            \c{Y} := \func{span}(Y).
        \end{align*}

        \item If $(x_n)_{n<\alpha} \in E^{[\leq\infty]}$, we write:
        \begin{align*}
            \c{(x_n)_{n<\alpha}} := \c{x_n : n < \alpha}.
        \end{align*}

        \item If $A,B \in E^{[\infty]}$, we write $A \leq B$ iff $\c{A} \subseteq \c{B}$.
        
        \item If $A,B \in E^{[\infty]}$, we write $A \leq^* B$ iff $A/N \leq B$ for some $N < \omega$.
        
        \item If $A \in E^{[\infty]}$, we write $E^{[\infty]}\restrictedto A := \{B \in E^{[\infty]} : B \leq A\}$.
        
        \item If $A \in E^{[\infty]}$, we write $E^{[<\infty]}\restrictedto A := \{a \in E^{[<\infty]} : \c{a} \subseteq \c{A}\}$.
    \end{enumerate}
\end{notation}

Block sequences/subspaces were introduced as they are more combinatorially well-behaved than arbitrary infinite-dimensional subspaces. The focus on block sequences is further justified by the observation that every infinite-dimensional subspace of $E$ contains a block subspace. This will be apparent once we have introduced the relevant definitions and notations for almost disjoint families of subspaces.

\subsection{Intersection of block subspaces}
Given $A,B \in E^{[\infty]}$, if $\c{A} \cap \c{B}$ is infinite-dimensional, then by the remark above it contains a block subspace. The following lemma shows that $\c{A} \cap \c{B}$ is itself a block subspace\footnote{This lemma corrects footnote 8 of \cite{S19}, which incorrectly stated that intersection of block subspaces need not be block.}. This observation is central in ensuring that the proof of Theorem \ref{thm:main.full.mad.family} and \ref{thm:cs.mad.family} do not require additional set-theoretic hypotheses.

\begin{lemma}
\label{lem:block.subspaces.intersection.lemma}
    Let $A,B \in E^{[\infty]}$ be such that $\c{A} \cap \c{B}$ is infinite-dimensional. Then there exists some $C \in E^{[\infty]}$ such that $\c{C} = \c{A} \cap \c{B}$.
\end{lemma}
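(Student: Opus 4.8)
The plan is to use the \emph{common gaps} of the two bases to cut $\omega$ into consecutive finite blocks, on each of which $W := \c{A} \cap \c{B}$ contributes at most one dimension; concatenating the resulting vectors then gives a block basis of $W$. Write $A = (a_i)_{i<\omega}$ and $B = (b_j)_{j<\omega}$, and set $I_i := \supp(a_i)$ and $J_j := \supp(b_j)$. The basic input is the elementary observation that, since the $I_i$ are pairwise disjoint, any $w = \sum_i \lambda_i a_i \in \c{A}$ satisfies $\supp(w) = \bigcup_{\lambda_i \neq 0} I_i$, and symmetrically for $\c{B}$.

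First I would call a boundary $k \mid k{+}1$ a \emph{common gap} if it is straddled by no $I_i$ and no $J_j$. If $k \mid k{+}1$ is a common gap, then the $A$-expansion of any $w \in \c{A}$ splits at $k$, so $w \restrictedto [0,k]$ and $w \restrictedto (k,\omega)$ both lie in $\c{A}$; the same holds for $\c{B}$, and hence both pieces lie in $W$. The common gaps therefore partition $\omega$ into maximal consecutive \emph{indecomposable blocks} $R_0 < R_1 < \cdots$, and every $w \in W$ decomposes as $w = \sum_k w \restrictedto R_k$ with each summand in $W$, giving $W = \bigoplus_k \big( W \cap \c{e_n : n \in R_k} \big)$.

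Next I would show that each block is finite and supports at most one dimension. For finiteness, note that if a nonzero $w \in W$ is supported inside a block $R$ and $q := \max\supp(w)$, then by the structural fact the $I_i$ and the $J_j$ containing $q$ both end at $q$, so $q \mid q{+}1$ is a common gap; were $R$ infinite this gap would be interior to $R$, contradicting indecomposability, so an infinite block supports only $0$. As $W$ is infinite-dimensional, every block is therefore finite and infinitely many are nonzero. For the dimension bound, fix a finite block $R$ and form the bipartite graph on the intervals $I_i, J_j \subseteq R$ joining $I_i$ to $J_j$ when $I_i \cap J_j \neq \emptyset$. Since the $I_i$ are pairwise disjoint and linearly ordered, and likewise the $J_j$, this graph has no cycle; indecomposability of $R$ (no interior common gap) forces it to be connected, hence a tree. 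A vector $w \in W \cap \c{e_n : n \in R}$ is determined by its coefficient tuples $(\lambda_i),(\mu_j)$, and each shared coordinate $n \in I_i \cap J_j$ imposes a nonzero proportionality $\lambda_i a_i(n) = \mu_j b_j(n)$ between $\lambda_i$ and $\mu_j$; propagating along the tree from one vertex shows $w$ is fixed by a single scalar, so $\dim \big( W \cap \c{e_n : n \in R} \big) \leq 1$.

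Finally, picking a nonzero $c_k$ in each nonzero summand $W \cap \c{e_n : n \in R_k}$ produces vectors whose supports lie in the increasing consecutive blocks $R_k$, hence a genuine block sequence $C \in E^{[\infty]}$ (infinite since $W$ is infinite-dimensional) with $\c{C} = W$. I expect the crux to be the dimension bound on an indecomposable block: the essential geometric fact is that the overlap graph of two families of pairwise-disjoint intervals on a line is a forest, which becomes a tree under indecomposability and pins the dimension at one. Coordinates of $R$ lying in some $I_i$ but no $J_j$ (or vice versa) only force further coefficients to vanish, so they can lower but never raise the dimension and do not disturb the bound.
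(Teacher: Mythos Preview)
Your common-gap decomposition is a genuinely different route from the paper's: there the argument runs through an equivalence relation on $A$-basis indices ($m \sim n$ iff every $v \in \c A \cap \c B$ has $m \in \supp_A(v) \leftrightarrow n \in \supp_A(v)$), showing that its classes are consecutive and that each is realized as some $\supp_A(v)$. Your coordinate-level decomposition $W = \bigoplus_k (W \cap \c{e_n : n \in R_k})$ is cleaner, and the finiteness step is nice.

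There is one genuine gap, in the dimension bound. The bipartite graph with edges $I_i \cap J_j \neq \emptyset$ need \emph{not} be connected when $R$ is indecomposable: take $R = \{0,1,2,3\}$ with a single $I_1 = \{0,3\}$ and a single $J_1 = \{1,2\}$. The convex hull of $I_1$ straddles every interior boundary, so $R$ has no interior common gap, yet $I_1 \cap J_1 = \emptyset$ and the graph is two isolated vertices. Indecomposability only buys connectedness of the \emph{convex-hull} overlap graph, and along a convex-hull edge that is not an actual edge you get no proportionality between $\lambda_i$ and $\mu_j$ (only, as you note at the end, a vanishing constraint on one of them --- which does not obviously propagate across components). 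The fix is already latent in your finiteness argument: you showed that for nonzero $w$ supported in $R$ there is a common gap at $\max\supp(w)$, hence $\max\supp(w) = \max R =: M$. If $w_1, w_2$ are both nonzero in $W \cap \c{e_n : n \in R}$, then $w_1 - w_1(M)w_2(M)^{-1}\, w_2$ has strictly smaller max-support and so must vanish; this gives $\dim \leq 1$ directly, bypassing the graph entirely.
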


\begin{proof}
    We write $A = (x_n)_{n<\omega}$ and $B = (y_n)_{n<\omega}$. For each $v \in \c{A} \cap \c{B}$, we let:
    \begin{align*}
        \supp_A(v) &:= \{n < \omega : \supp(x_n) \subseteq \supp(v)\}, \\
        \supp_B(v) &:= \{n < \omega : \supp(y_n) \subseteq \supp(v)\}.
    \end{align*}
    We remark that for all $v,w \in \c{A} \cap \c{B}$:
    \begin{align*}
        \supp(v) \subseteq \supp(w) &\iff \supp_A(v) \subseteq \supp_A(w) \\
        &\iff \supp_B(v) \subseteq \supp_B(w).
    \end{align*}
    For an arbitrary $v \in E$ and $k \in \supp(v)$, recall that $\lambda_k(v) \in \mathbb{F}$ denotes the unique coefficient such that $v = u + \lambda_k(v)e_k + w$ for some vectors $u < e_k < w$ (or $u = 0$ or $w = 0$).

    \begin{claim}
    \label{claim:supp(v).intersection}
        Let $v_0,v_1 \in \c{A} \cap \c{B}$, and suppose that $\supp_A(v_0) \cap \supp_A(v_1) \neq \emptyset$. Then there exists some $w \in \c{A} \cap \c{B}$ such that $\supp_A(w) = \supp_A(v_0) \cap \supp_A(v_1)$.
    \end{claim}

    \begin{midproof}
        Let $v_0,v_1 \in \c{A} \cap \c{B}$ be such that $\supp_A(v_0) \cap \supp_A(v_1) \neq \emptyset$. We may write:
        \begin{align}
            v_0 &= \sum_{i \in \supp_A(v_0)} \xi_i^0x_i = \sum_{j \in \supp_B(v_0)} \mu_j^0y_j, \label{eq:v_0} \\
            v_1 &= \sum_{i \in \supp_A(v_1)} \xi_i^1x_i = \sum_{j \in \supp_B(v_1)} \mu_j^1y_j. \label{eq:v_1}
        \end{align}
        where $\xi_i^0,\xi_i^1,\xi_j^0,\xi_j^1$ are all non-zero. We let:
        \begin{align*}
            w := \sum_{i \in \supp_A(v_0) \cap \supp_A(v_1)} \xi_i^0 x_i, \\
            w' := \sum_{j \in \supp_B(v_0) \cap \supp_B(v_1)} \mu_j^0 y_j.
        \end{align*}
        It's clear that $\supp_A(w) = \supp_A(v_0) \cap \supp_A(v_1)$. We shall show that $w = w'$. This implies that that $w \in \c{A} \cap \c{B}$, proving the claim.

        By the symmetry of the argument, it suffices to show that $\supp(w) \subseteq \supp(w')$, and $\lambda_k(w) = \lambda_k(w')$ for all $k \in \supp(w)$. Let $k \in \supp(w)$, so $k \in \supp(x_i)$ for some $i \in \supp_A(v_0) \cap \supp_A(v_1)$. By the second equalities in (\ref{eq:v_0}) and (\ref{eq:v_1}), we have that $k \in \supp(y_j)$ for some $j \in \supp_B(v_0)$, and $k \in \supp(y_{j'})$ for some $j' \in \supp_B(v_1)$. Since $B$ is a block sequence, $k \in \supp(y_j) \cap \supp(y_{j'})$ implies that $j = j'$. Therefore, $j \in \supp_B(v_0) \cap \supp_B(v_1)$, so $k \in \supp(w')$. By (\ref{eq:v_0}), we can conclude that:
        \begin{align*}
            \lambda_k(w) = \xi_i^0\lambda_k(x_i) = \lambda_k(v_0) = \mu_j^0\lambda_k(y_j) = \lambda_k(w'),
        \end{align*}
        which completes the proof. 
    \end{midproof}
    
    Let $K \subseteq \omega$ be the set of all $n$ such that $n \in \supp_A(v)$ for some $v \in \c{A} \cap \c{B}$. Since $\c{A} \cap \c{B}$ is infinite-dimensional, $K$ is infinite. Define a relation $\sim$ on $K$ such that for all $m,n \in K$:
    \begin{align*}
        m \sim n \iff \forall v \in \c{A} \cap \c{B}(m \in \supp_A(v) \leftrightarrow n \in \supp_A(v)).
    \end{align*}
    It's clear from the definition that $\sim$ is an equivalence relation, and that every equivalence class is finite. 

    \begin{claim}
        For each $n \in K$, there exists some $v \in \c{A} \cap \c{B}$ such that $\supp_A(v) = [n]_\sim$. 
    \end{claim}

    \begin{midproof}
        It follows from the definition of $\sim$ that:
        \begin{align*}
            [n]_\sim = \bigcap\{\supp_A(u) : u \in \c{A} \cap \c{B} \text{ and } n \in \supp_A(u)\}.
        \end{align*}
        However, since $\supp_A(u)$ is finite for each $u \in \c{A} \cap \c{B}$, we may find $u_0,\dots,u_{m-1} \in \c{A} \cap \c{B}$ such that $[n]_\sim = \bigcap_{i<m} \supp_A(u_i)$. Then the existence of such a $v$ follows from Claim \ref{claim:supp(v).intersection}.
    \end{midproof}

    Thus, for each $n$ we let $z_n \in \c{A} \cap \c{B}$ be a fixed vector such that $\supp_A(z_n) = [n]_\sim$. 

    \begin{claim}
    \label{claim:eq.classes.are.blocks}
        Let $m,n,l \in K$ be such that $m < n < l$, and suppose that $m \sim l$. Then $m \sim n \sim l$.
    \end{claim}

    \begin{midproof}
        Suppose otherwise, so there exist $m < n < l$ in $K$ such that $m \sim l$ but $n \notin [m]_\sim$. We write:
        \begin{align*}
            z_m &= \sum_{i \in \supp_A(z_m)} \xi_ix_i = \sum_{j \in \supp_B(z_m)} \mu_jy_j,
        \end{align*} 
        where $\xi_i,\mu_j$ are non-zero. Let $i_0 := \max\{i \in \supp_A(z_m) : i < n\}$ and $i_1 := \min\{i \in \supp_A(z_m) : i > n\}$, and let $k_0 := \max(\supp(x_{i_0}))$ and $k_1 := \min(\supp(x_{i_1}))$. We define:
        \begin{align*}
            w := \sum_{\substack{i \in \supp_A(z_m) \\ \max(\supp(x_i)) \leq k_0}} \xi_ix_i.
        \end{align*}
        We shall show that $w \in \c{A} \cap \c{B}$. This gives us the required contradiction, as $m \in \supp_A(w)$ but $l \not\in \supp_A(w)$. 

        Observe that for each $j \in \supp_B(z_m)$, either $k_0 < \min(\supp(y_j))$ or $\max(\supp(y_j)) < k_1$ --- otherwise, if $j \in \supp_B(z_m)$ is such that $\min(\supp(y_j)) \leq k_0 < k_1 \leq \max(\supp(y_j))$, then by the block sequence property of $B$, we have that for all $j' \neq j$, either $\max(\supp(y_{j'})) < k_0$ or $k_1 < \min(\supp(y_{j'}))$. Since $k_0 < \min(\supp(x_n)) \leq \max(\supp(x_n)) < k_1$ and $j \notin \supp_B(z_n)$, this implies that:
        \begin{align*}
            \supp(x_n) \cap \bigcup_{j' \in \supp_B(z_n)} \supp(y_{j'}) = \emptyset,
        \end{align*}
        which contradicts that $\supp(x_n) \subseteq \supp(z_n)$. 

        Note that for all $k \in \supp(z_m)$, either $k \leq k_0$ or $k_1 \leq k$. Thus, for all $j \in \supp_B(z_m)$, since $\supp(y_j) \subseteq \supp(z_m)$, if $\max(\supp(y_j)) < k_1$ then $\max(\supp(y_j)) \leq k_0$. Similarly, if $k_0 < \min(\supp(y_j))$ then $k_1 \leq \min(\supp(y_j))$. Therefore, we have that:
        \begin{align*}
            z_m = \sum_{\substack{j \in \supp_B(z_m) \\ \max(\supp(y_i)) \leq k_0}} \mu_j y_j + \sum_{\substack{j \in \supp_B(z_m) \\ k_1 \leq \min(\supp(y_i))}} \mu_j y_j.
        \end{align*}
        We may now express $\supp(z_m)$ in two different ways:
        \begin{align*}
            \bigcup_{i \in \supp_A(z_m)} \supp(x_i) &= \bigcup_{\substack{i \in \supp_A(z_m) \\ \max(\supp(x_i)) \leq k_0}} \supp(x_i) \cup \bigcup_{\substack{i \in \supp_A(z_m) \\ k_1 \leq \min(\supp(x_i))}} \supp(x_i), \\
            \bigcup_{j \in \supp_B(z_m)} \supp(y_j) &= \bigcup_{\substack{j \in \supp_B(z_m) \\ \max(\supp(y_j)) \leq k_0}} \supp(y_j) \cup \bigcup_{\substack{j \in \supp_B(z_m) \\ k_1 \leq \min(\supp(y_j))}} \supp(y_j).
        \end{align*}
        This implies that:
        \begin{align*}
            \bigcup_{\substack{i \in \supp_A(z_m) \\ \max(\supp(x_i)) \leq k_0}} \supp(x_i) &= \bigcup_{\substack{j \in \supp_B(z_m) \\ \max(\supp(y_j)) \leq k_0}} \supp(y_j), \\
            \bigcup_{\substack{i \in \supp_A(z_m) \\ k_1 \leq \min(\supp(x_i))}} \supp(x_i) &= \bigcup_{\substack{j \in \supp_B(z_m) \\ k_1 \leq \min(\supp(y_j))}} \supp(y_j).
        \end{align*}
        Therefore, if we let:
        \begin{align*}
            w' := \sum_{\substack{j \in \supp_B(z_m) \\ \max(\supp(y_j)) \leq k_0}} \mu_jy_j,
        \end{align*}
        then we have that $\supp(w) = \supp(w')$. Furthermore, for any $k \in \supp(w)$, $\lambda_k(w) = \lambda_k(z_m) = \lambda_k(w')$, so $w = w'$. It's clear from the definition that $w \in \c{A}$ and $w' \in \c{B}$, so we have that $w \in \c{A} \cap \c{B}$, as desired.
    \end{midproof}

    We now choose an increasing sequence $(n_i)_{i<\omega}$ of representatives of all equivalence classes in $\sim$ (i.e. $\bigcup_{i<\omega} [n_i]_\sim = K$). We let $C := (z_{n_i})_{i<\omega}$. By Claim \ref{claim:eq.classes.are.blocks}, $C$ is an infinite block sequence. We may finish the proof of this lemma by proving the following claim:

    \begin{claim}
        $\c{C} = \c{A} \cap \c{B}$.
    \end{claim}

    \begin{midproof}
        Since $z_{n_i} \in \c{A} \cap \c{B}$ for all $i$, we have that $\c{C} \subseteq \c{A} \cap \c{B}$. Conversely, let $v \in \c{A} \cap \c{B}$. Let $n_{i_0},\dots,n_{i_{t-1}}$ be an increasing enumeration of all $n_{i_j}$'s such that $n_{i_j} \in \supp_A(v)$. Fix any $k_j \in \supp(x_{n_{i_j}}) \subseteq \supp(z_{n_{i_j}})$, and let:
        \begin{align*}
            \xi_j := \frac{\lambda_{k_j}(v)}{\lambda_{k_j}(z_{n_{i_j}})}.
        \end{align*}
        Note that $\xi_j \neq 0$ for all $j$. Now let:
        \begin{align*}
            v' := \sum_{j<t} \xi_jz_{n_{i_j}}.
        \end{align*}
        Clearly $v' \in \c{C}$. We shall show that $v = v'$, which proves the claim.

        We first note that $\supp_A(v') = \supp_A(v)$ (which implies that $\supp(v') = \supp(v)$) --- observe that $\supp_A(v') = \bigcup_{j<t} \supp_A(z_{n_{i_j}})$. If $m \in \supp_A(v)$, then $[m]_\sim \subseteq \supp_A(v)$ and $[m]_\sim = [n_{i_j}]_\sim = \supp_A(z_{n_{i_j}})$ for some $j$, so $m \in \supp_A(v')$. Conversely, if $m \in \supp_A(z_{n_{i_j}})$, then since $n_{i_j} \in \supp_A(v)$, we have that $[m]_\sim = [n_{i_j}]_\sim \subseteq \supp_A(v)$, so $m \in \supp_A(v)$. 

        It remains to show that $\lambda_k(v') = \lambda_k(v)$ for all $k \in \supp(v)$. We fix some $j < t$, and observe that:
        \begin{align*}
            \lambda_{k_j}(v') = \frac{\lambda_{k_j}(v)}{\lambda_{k_j}(z_{n_{i_j}})} \cdot \lambda_{k_j}(z_{n_{i_j}}) = \lambda_{k_j}(v).
        \end{align*}
        Since $v \in \c{A}$ and $k_j \in \supp(x_{n_{i_j}}) \subseteq \supp(v)$, there exist some vectors $u,w \in \c{A}$ (or $u = 0$ or $w = 0$) such that $u < x_{n_{i_j}} < w$, and:
        \begin{align*}
            v = u + \frac{\lambda_{k_j}(v)}{\lambda_{k_j}(x_{n_{i_j}})}x_{n_{i_j}} + w.
        \end{align*}
        Similarly, there exist some vectors $u',w' \in \c{A}$ such that $u' < x_{n_{i_j}} < w'$ (or $u' = 0$ or $w'      = 0$), and:
        \begin{align*}
            v' = u' + \frac{\lambda_{k_j}(v)}{\lambda_{k_j}(x_{n_{i_j}})}x_{n_{i_j}} + w'.
        \end{align*}
        Therefore, $v - v' \in \c{A} \cap \c{B}$, and $n_{i_j} \notin \supp(v - v')$. But this implies that $[n_{i_j}]_\sim \cap \supp(v - v') = \emptyset$, which is only possible if for all $k \in \supp(z_{n_{i_j}})$, $\lambda_k(v') = \lambda_k(v)$. Therefore, for all $j < t$ and $k \in \supp(z_{n_{i_j}})$, $\lambda_k(v') = \lambda_k(v)$, so $v' = v$, as desired.
    \end{midproof}
\end{proof}

\section{Local Ramsey theory of block subspaces}
\label{sec:local.ramsey.theory}
Smythe explored the local Ramsey theory of block subspaces of a countable vector space in \cite{S18}, mainly focusing on $\leq^*$-upward closed subsets of $E^{[\infty]}$ (which Smythe calls \emph{families}) and their various properties.  We provide an overview of this theory, along with several additional results which will prove to be useful in later sections.

\subsection{Semicoideals}
To avoid confusion with other uses of the word ``family'' such as almost disjoint families, we shall use the term ``semicoideals'' in place of ``families''.

\begin{definition}
\label{def:semicoideal}
    A subset $\H \subseteq E^{[\infty]}$ is a \emph{semicoideal} if it is $\leq^*$-upward closed. That is, if $A \in \H$ and $A \leq^* B$, then $B \in \H$.
\end{definition}

To cope with the failure of the pigeonhole principle in the setting of countable vector spaces over $\mathbb{F} \neq \mathbb{F}_2$, Smythe introduced a weaker form of the pigeonhole property of semicoideals, which he calls \emph{fullness}.

\begin{notation}
    Let $\H \subseteq E^{[\infty]}$ be a semicoideal. Given $A \in \H$, we write:
    \begin{align*}
        \H\restrictedto A := \{B \in \H : B \leq A\}.
    \end{align*}
\end{notation}

\begin{definition}
\label{def:full.semicoideal}
    Let $\H \subseteq E^{[\infty]}$ be a semicoideal.
    \begin{enumerate}
        \item A subset $\D \subseteq E^{[\infty]}$ is \emph{$\H$-dense} below some $A \in \H$ if for all $B \in \H\restrictedto A$, there exists some $C \leq B$ such that $C \in \D$. A set $Y \subseteq E$ is \emph{$\H$-dense} below $A$ if the set $\{B \in E^{[\infty]} : \c{B} \subseteq Y\}$ is.

        \item We say that $\H$ is \emph{full} if for all $Y \subseteq E$ such that $Y$ is $\H$-dense below some $A \in \H$, there exists some $B \in \H\restrictedto A$ such that $\c{B} \subseteq Y$.
    \end{enumerate}
\end{definition}

It turns out that if $E$ is a vector space over $\mathbb{F}_2$, then fullness coincides with the local pigeonhole principle property. In this case, we say that $\H$ is a \emph{coideal}.

\begin{definition}
\label{def:coideal.vector.spaces}
    Let $\H \subseteq E^{[\infty]}$ be a semicoideal. We say that $\H$ is a \emph{coideal} if for all $Y \subseteq E$ and $A \in \H$, there exists some $B \in \H\restrictedto A$ such that $\c{B} \subseteq Y$ or $\c{B} \subseteq Y^c$.
\end{definition}

By Hindman's theorem, $E^{[\infty]}$ is a coideal if $E$ is a vector space over $\mathbb{F}_2$.

\begin{lemma}
\label{lem:full.iff.coideal}
    Suppose that $E$ is a vector space over $\mathbb{F}_2$, and let $\H \subseteq E^{[\infty]}$ be a semicoideal. Then $\H$ is full iff $\H$ is a coideal.
\end{lemma}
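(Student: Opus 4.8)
The plan is to prove the two implications separately; the interesting content lies entirely in the direction \emph{full $\Rightarrow$ coideal}, where the hypothesis $|\mathbb{F}|=2$ enters through the fact (noted above, via Hindman's theorem) that $E^{[\infty]}$ is itself a coideal. The reverse implication is purely formal and uses neither Hindman nor the cardinality of $\mathbb{F}$.

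For \emph{coideal $\Rightarrow$ full}, I would suppose $\H$ is a coideal and let $Y \subseteq E$ be $\H$-dense below some $A \in \H$. Applying the coideal property to $Y$ and $A$ yields $B \in \H\restrictedto A$ with $\c{B} \subseteq Y$ or $\c{B} \subseteq Y^c$. I would then rule out the second alternative: if $\c{B} \subseteq Y^c$, then since $Y$ is $\H$-dense below $A$ and $B \in \H\restrictedto A$, there is $C \leq B$ with $\c{C} \subseteq Y$; but then $\c{C} \subseteq \c{B} \subseteq Y^c$ together with $\c{C} \subseteq Y$ forces $\c{C} \subseteq Y \cap Y^c = \emptyset$, which is absurd since $\c{C}$ is a subspace. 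Hence $\c{B} \subseteq Y$, witnessing fullness.

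For \emph{full $\Rightarrow$ coideal}, I would fix $Y \subseteq E$ and $A \in \H$ and split on whether $Y$ is $\H$-dense below $A$. If it is, fullness immediately produces $B \in \H\restrictedto A$ with $\c{B} \subseteq Y$. If it is not, then unwinding the negation gives some $B_0 \in \H\restrictedto A$ such that no $C \leq B_0$ satisfies $\c{C} \subseteq Y$. The key step is to show that $Y^c$ is then $\H$-dense below $B_0$: given any $B \in \H\restrictedto B_0$, I would apply the coideal property of $E^{[\infty]}$ (Hindman, using $|\mathbb{F}|=2$) to $Y$ and $B$ to obtain $C \leq B$ in $E^{[\infty]}$ with $\c{C} \subseteq Y$ or $\c{C} \subseteq Y^c$; since $C \leq B \leq B_0$, the first option is excluded by the choice of $B_0$, so $\c{C} \subseteq Y^c$. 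Finally, fullness applied to $Y^c$ below $B_0$ yields $B \in \H\restrictedto B_0 \subseteq \H\restrictedto A$ with $\c{B} \subseteq Y^c$, which is the second alternative in the definition of a coideal.

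The only genuinely nontrivial input is the coideal-ness of the ambient space $E^{[\infty]}$ for $\mathbb{F}_2$, which is exactly where the restriction on the field is used and which is supplied by Hindman's theorem; everything else is a formal manipulation of the density and fullness definitions. I expect the one point requiring minor care is that the witnesses $C$ in the density condition are only required to lie in $E^{[\infty]}$ rather than in $\H$, together with the transitivity of $\leq$ (immediate from $\c{\cdot} \subseteq \c{\cdot}$), both of which are what make the case analysis go through cleanly.
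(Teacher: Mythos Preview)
Your proposal is correct and follows essentially the same approach as the paper's proof: both directions are argued identically, using the coideal property to rule out the $Y^c$ alternative in one direction, and in the other direction splitting on whether $Y$ is $\H$-dense below $A$, invoking Hindman's theorem to show $Y^c$ is $\H$-dense below the witness $B_0$ when it is not, and then applying fullness to $Y^c$. Your write-up is in fact slightly more explicit than the paper's in spelling out the contradiction in the easy direction.
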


\begin{proof}
    Suppose that $\H$ is a coideal. Let $Y \subseteq E$, and suppose that $Y$ is $\H$-dense below some $A \in \H$. Since $\H$ is a coideal, there exists some $B \in \H\restrictedto A$ such that $\c{B} \subseteq Y$ or $\c{B} \subseteq Y^c$. Since $Y$ is $\H$-dense below $A$, the latter option is not possible.

    Now suppose that $\H$ is full. Let $Y \subseteq E$, and let $A \in \H$. If $Y$ is $\H$-dense below $A$, then we are done by the fullness of $\H$, so assume otherwise. This means that there exists some $B \in \H\restrictedto A$ such that for all $C \leq B$, $\c{C} \not\subseteq Y$. By Hindman's theorem, for all $C \in \H\restrictedto B$, there exists some $D \leq C$ such that $\c{D} \subseteq Y^c$ (as $\c{D} \not\subseteq Y$). But this means that $Y^c$ is $\H$-dense below $B$, so there exists some $C \in \H\restrictedto B$ such that $\c{C} \subseteq Y^c$, as desired.
\end{proof}

\subsection{Selectivity properties}
In the process of studying semicoideals related to mad families (particularly $\H(\A)$, see Definition \ref{def:ad.semi-ideals.and.semicoideals}), Smythe extended several local Ramsey theoretic properties of coideals of $[\N]^\infty$ to semicoideals of $E^{[\infty]}$. Selectivity properties of (ultra)filters (see Definition \ref{def:filter}) in $E^{[\infty]}$ has also been studied in \cite{S23}.  The first such property extends the (p)-property for coideals of $[\N]^\infty$ (see Lemma 7.4, \cite{T10}).

\begin{definition}
\label{def:(p)-semicoideal}
    Let $\H \subseteq E^{[\infty]}$ be a semicoideal.
    \begin{enumerate}
        \item Let $(A_n)_{n<\omega}$ be a $\leq$-decreasing sequence in $\H$. We say that $B \in \H$ \emph{weakly diagonalises} $(A_n)_{n<\omega}$ if $B \leq A_0$ and $B \leq^* A_n$ for all $n$\footnote{In Definition 2.2 of \cite{S18}, this property was similarly defined using the term \emph{diagonalises}. We shall reserve this term for a stronger form of diagonalisation later (Definition \ref{def:selective})}.

        \item We say that $\H$ satisfies the \emph{(p)-property}, or that $\H$ is a \emph{(p)-semicoideal}, if every $\leq$-decreasing sequence in $\H$ has a weak diagonalisation in $\H$.
    \end{enumerate}
\end{definition}

We shall provide a similar extension of the \emph{(q)-property} (see Lemma 7.4, \cite{T10}). Smythe also made a similar extension, which he termed as \emph{spreadness} (Definition 8.11, \cite{S18}).

\begin{definition}
\label{def:(q)-semicoideal}
    Let $\H \subseteq E^{[\infty]}$ be a semicoideal. We say that $\H$ satisfies the \emph{(q)-property}, or that $\H$ is a \emph{(q)-semicoideal}, if for all $A \in \H$ and for all increasing sequence of finite intervals $(I_m)_{m<\omega}$, there exists some $B \in \H\restrictedto A$ such that for all $n$, there exists some $m$ such that:
    \begin{enumerate}
        \item $\c{r_n(B)} \subseteq \c{r_{\min(I_m)-1}(A)}$, and;

        \item $B/n \leq A/\max(I_m)$.
    \end{enumerate}
\end{definition}

In a similar vein, we extend the definition of selectivity of coideals of $[\N]^\infty$ (see Definition 7.3, \cite{T10}) to semicoideals of block subspaces.

\begin{definition}
\label{def:selective}
    Let $\H \subseteq E^{[\infty]}$ be a semicoideal.
    \begin{enumerate}
        \item Let $A \in \H$, and let $(A_n)_{n<\omega}$ be a $\leq$-decreasing sequence in $\H$ below $A$ (i.e. $A_0 \leq A$). We say that $B \in \H\restrictedto A$ \emph{diagonalises $(A_n)_{n<\omega}$ below $A$} if $B/n \leq A_{d_A(r_n(B))}$ for all $n < \omega$.

        \item We say that $\H$ is \emph{selective} if for all $A \in \H$ and $\leq$-decreasing chain $(A_n)_{n<\omega}$ of elements in $\H$ below $A$, there exists a diagonalisation of $(A_n)_{n<\omega}$ below $A$ in $\H$. 
    \end{enumerate}
\end{definition}

Smythe also proposed a notion of \emph{strong diagonalisation} in Definition 4.1 of \cite{S18}, followed by a notion of \emph{strong (p)-property}. The following lemma shows that they are equivalent if $E$ is a vector space over a finite field. It is unclear if they are equivalent in case the underlying field is infinite.

\begin{lemma}
    Suppose that $E$ is a vector space over a finite field. Let $\H \subseteq E^{[\infty]}$ be a semicoideal. The following are equivalent:
    \begin{enumerate}
        \item $\H$ is selective.
        
        \item If $(A_a)_{a \in E^{[<\infty]}}$ generates a filter in $\H$, then there exists some $B \in \H$ such that for all $n$, $B/n \leq A_{r_n(B)}$. 
    \end{enumerate}
\end{lemma}

\begin{proof}
    \underline{(2)$\implies$(1):} Let $(A_n)_{n<\omega}$ be a $\leq$-decreasing sequence in $\H$ below some $A \in \H$. For each $a \in E^{[<\infty]}\restrictedto A$, we let $A_a := A_{d_A(a)}$, and if $a \notin E^{[<\infty]}\restrictedto A$ then we let $A_a := A$. By (2), there exists some $B \in \H$ such that $B/n \leq A_{r_n(B)} = A_{d_A(r_n(B))}$ for all $n$, so $B$ diagonalises $(A_n)_{n<\omega}$ below $A$. 

    \underline{(1)$\implies$(2):} Suppose that $(A_a)_{a \in E^{[<\infty]}}$ generates a filter in $\H$. Let $A := A_\emptyset$. For each $n$, let:
    \begin{align*}
        \F_n := \{a \in E^{[<\infty]}\restrictedto A : d_A(a) \leq n\}.
    \end{align*}
    Since the underlying field is finite, $\F_n$ is finite for all $n$. Thus, we may inductively define a $\leq$-decreasing sequence $(A_n)_{n<\omega}$ in $\H$ such that $A_n \leq A_a$ for all $a \in \F_n$. Since $\H$ is selective, we may let $B \in \H$ be such that $B/n \leq A_{d_A(r_n(B))} \leq A_{r_n(B)}$ for all $n$. Then $B$ is the desired element in $\H$.
\end{proof}

Note that the assumption that the underlying field is finite is not required in the direction (2)$\implies$(1).

We conclude the section by examining the relationship among the selectivity property, the (p)-property and the (q)-property.

\begin{lemma}
\label{lem:(q)-property.refines.diagonalisation}
    Let $\H \subseteq E^{[\infty]}$ be a (q)-semicoideal. Let $(A_n)_{n<\omega}$ be a $\leq$-decreasing sequence in $\H$. If $B \in \H\restrictedto A_0$ weakly diagonalises $(A_n)_{n<\omega}$, then there exists some $C \in \H\restrictedto B$ which diagonalises $(A_n)_{n<\omega}$ below $A_0$.
\end{lemma}

\begin{proof}
    Let $A := A_0$, and let $B \in \H\restrictedto A$ weakly diagonalise $(A_n)_{n<\omega}$. Let $(I_m)_{m<\omega}$ be any increasing sequence of intervals such that $B/\max(I_m) \leq A_{d_A(r_{\min(I_m)-1}(B))}$ for all $m$. By the (q)-property, there exists some $C \in \H\restrictedto B$ such that for all $n$, there exists some $m$ such that $\c{r_n(C)} \subseteq \c{r_{\min(I_m)-1}(B)}$ and $C/n \leq B/\max(I_m)$. The first property implies that $d_A(r_n(C)) \leq d_A(r_{\min(I_m)}(B))$, so by the second property:
    \begin{align*}
        C/n \leq B/\max(I_m) \leq A_{d_A(r_{\min(I_m)}(B))} \leq A_{d_A(r_n(C))},
    \end{align*}
    so $C$ diagonalises $(A_n)_{n<\omega}$ below $A_0$.
\end{proof}

\begin{proposition}
\label{prop:selective.iff.(p,q)}
    Let $\H \subseteq E^{[\infty]}$ be a semicoideal. The following are equivalent:
    \begin{enumerate}
        \item $\H$ is selective.

        \item $\H$ satisfies both the (p)-property and the (q)-property.
    \end{enumerate}
\end{proposition}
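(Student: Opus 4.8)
The plan is to prove the two implications separately, using Lemma \ref{lem:(q)-property.refines.diagonalisation} as the engine for the harder direction.

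For (1) $\implies$ (2), both properties will follow by feeding suitable chains into selectivity. For the (p)-property, given a $\leq$-decreasing chain $(A_n)_{n<\omega}$ in $\H$, I would apply selectivity with $A := A_0$ to obtain $B \in \H\restrictedto A_0$ with $B/n \leq A_{d_{A_0}(r_n(B))}$ for all $n$; since $d_{A_0}(r_n(B)) \to \infty$ as $n \to \infty$ and the chain is $\leq$-decreasing, this forces $B \leq^* A_n$ for every $n$, so $B$ is a weak diagonalisation. For the (q)-property, given $A \in \H$ and an increasing sequence of finite intervals $(I_m)_{m<\omega}$, I would manufacture a chain tailored to the intervals: for each $k$ let $m(k)$ be the least $m$ with $\min(I_m) > k$, and set $A_k := A/\max(I_{m(k)})$. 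Since $A \leq^* A/N$ for every $N$ and $\H$ is $\leq^*$-upward closed, each $A_k$ lies in $\H$; and $(A_k)_{k<\omega}$ is $\leq$-decreasing below $A$ because $k \mapsto m(k)$ is non-decreasing. Selectivity then yields $B \in \H\restrictedto A$ with $B/n \leq A_{d_A(r_n(B))}$ for all $n$. Writing $k := d_A(r_n(B))$ and $m := m(k)$, the equality $d_A(r_n(B)) = k$ gives $\c{r_n(B)} \subseteq \c{r_k(A)} \subseteq \c{r_{\min(I_m)-1}(A)}$ (as $\min(I_m) - 1 \geq k$), which is the first clause, while $B/n \leq A_k = A/\max(I_m)$ is the second.

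The implication (2) $\implies$ (1) is where the main obstacle lies. Given $A \in \H$ and a $\leq$-decreasing chain $(A_n)_{n<\omega}$ below $A$, I would first apply the (p)-property to obtain a weak diagonalisation $B \in \H\restrictedto A_0$, so that $B \leq A_0 \leq A$ and $B \leq^* A_n$ for all $n$. The temptation is to now invoke Lemma \ref{lem:(q)-property.refines.diagonalisation} directly, but that lemma produces a $C$ diagonalising $(A_n)$ only within $A_0$, i.e. $C/n \leq A_{d_{A_0}(r_n(C))}$, whereas selectivity demands diagonalisation within the possibly larger ambient space $A$, i.e. $C/n \leq A_{d_A(r_n(C))}$. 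These are genuinely different: since $A_0 \leq A$, one always has $d_{A_0}(r_n(C)) \leq d_A(r_n(C))$, so $A_{d_A(r_n(C))} \leq A_{d_{A_0}(r_n(C))}$ and the conclusion within $A_0$ is strictly weaker than the one we need.

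The resolution is to observe that the proof of Lemma \ref{lem:(q)-property.refines.diagonalisation} never uses that the ambient space equals $A_0$ — it only uses that $B \leq A$ (so that $d_A(r_j(B)) \to \infty$) and that $B \leq^* A_n$ for all $n$. Hence I would re-run that argument verbatim with $A$ as the ambient space: choose an increasing sequence of finite intervals $(I_m)_{m<\omega}$ with $B/\max(I_m) \leq A_{d_A(r_{\min(I_m)-1}(B))}$ for every $m$ (possible by induction, using $B \leq^* A_k$ to absorb each required tail), apply the (q)-property to $B$ and $(I_m)$ to obtain $C \in \H\restrictedto B$ with $\c{r_n(C)} \subseteq \c{r_{\min(I_m)-1}(B)}$ and $C/n \leq B/\max(I_m)$ for each $n$ with a suitable $m$, and then chain the inequalities
\[
    C/n \leq B/\max(I_m) \leq A_{d_A(r_{\min(I_m)-1}(B))} \leq A_{d_A(r_n(C))},
\]
where the last step uses $d_A(r_n(C)) \leq d_A(r_{\min(I_m)-1}(B))$. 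Finally, $C \leq B \leq A_0$ secures the case $n = 0$ (namely $C \leq A_{d_A(r_0(C))} = A_0$), so $C \in \H$ diagonalises $(A_n)$ within $A$, as required. The only real subtlety to check carefully is this ambient-space bookkeeping; everything else is a direct unwinding of the definitions.
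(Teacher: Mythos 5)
Your proposal is correct and, in outline, takes the same route as the paper: for (1)$\implies$(2) you use the identical constructions (a diagonalisation is in particular a weak diagonalisation, giving the (p)-property; the chain $A_k := A/\max(I_{m(k)})$ gives the (q)-property), and for (2)$\implies$(1) you compose the (p)-property with the interval argument underlying Lemma \ref{lem:(q)-property.refines.diagonalisation}.

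The point worth recording is that your handling of (2)$\implies$(1) is more careful than the paper's own write-up, and the extra care addresses a real issue. The paper's proof sets $A := A_0$ and cites Lemma \ref{lem:(q)-property.refines.diagonalisation} as stated, which only produces a diagonalisation within $A_0$; read literally, this verifies Definition \ref{def:selective} only in the special case where the ambient space is the first element of the chain, whereas the definition quantifies over every $A \in \H$ with $A_0 \leq A$. Your observation that the two conclusions genuinely differ is accurate: when $A_0 \leq A$ one has $d_{A_0}(a) \leq d_A(a)$ for every finite block sequence $a$, hence $A_{d_A(r_n(C))} \leq A_{d_{A_0}(r_n(C))}$, so diagonalisation within $A_0$ is the weaker statement. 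Your repair is exactly the right one: the proof of Lemma \ref{lem:(q)-property.refines.diagonalisation} only ever uses that $B \leq^* A_n$ for all $n$ (to build intervals with $B/\max(I_m) \leq A_{d_A(r_{\min(I_m)-1}(B))}$) together with the monotonicity of $d_A$, so it runs verbatim with $A$ as the ambient space, and the chain $C/n \leq B/\max(I_m) \leq A_{d_A(r_{\min(I_m)-1}(B))} \leq A_{d_A(r_n(C))}$, together with $C \leq B \leq A_0$ for the $n = 0$ case, yields diagonalisation within $A$. So your proof is not merely equivalent to the paper's; it closes a small genuine gap in it.
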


\begin{proof} 
    \underline{(2)$\implies$(1):} Let $(A_n)_{n<\omega}$ be a $\leq$-decreasing sequence in $\H$, and let $A := A_0$. By the (p)-property, there exists some $B \in \H\restrictedto A$ which weakly diagonalises $(A_n)_{n<\omega}$. Since $\H$ satisfies the (q)-property, by Lemma \ref{lem:(q)-property.refines.diagonalisation} there exists some $C \in \H\restrictedto B$ which diagonalises $(A_n)_{n<\omega}$. Therefore, $\H$ is selective.

    \underline{(1)$\implies$(2):} Since every diagonalisation is a weak diagonalisation, $\H$ satisfies the (p)-property. We shall show that $\H$ satisfies the (q)-property. Let $A \in \H$, and let $(I_m)_{m<\omega}$ be any increasing sequence of finite intervals. For each $n$, let $m(n)$ be the least integer such that $\min(I_{m(n)}) > n$. Let $B \in \H\restrictedto A$ diagonalise $(A/\max(I_{m(n)}))_{n<\omega}$ below $A$. Then for any $n$, if $k := d_A(r_n(B))$, then:
    \begin{align*}
        \c{r_n(B)} \subseteq \c{r_k(A)} \subseteq \c{r_{\min(I_{m(k)-1})}(A)},
    \end{align*}
    and:
    \begin{align*}
        B/n \leq A_{d_A(r_n(B))} = A/\max(I_{m(k)}),
    \end{align*}
    as desired.
\end{proof}

We shall examine the selectivity of semicoideals generated by almost disjoint families in the next section.

\section{Almost disjoint families of subspaces}
\label{sec:ad.families}
We recall the following:
\begin{definition}
    Two infinite-dimensional subspaces $V,W \subseteq E$ are \emph{almost disjoint} if $V \cap W$ is a finite-dimensional subspace of $E$. Otherwise, we say that $V$ and $W$ are \emph{compatible}.
\end{definition}

\begin{definition}    
    Let $\A$ be a family of infinite-dimensional subspaces of $E$. We say that $\A$ is \emph{almost disjoint} if all subspaces in $\A$ are pairwise almost disjoint. We say that $\A$ is \emph{maximal almost disjoint} (or just \emph{mad}) if $\A$ is not strictly contained in another almost disjoint family of infinite-dimensional subspaces.
\end{definition}

Since every infinite-dimensional subspace of $E$ contains a block subspace, an almost disjoint family of subspaces $\A$ is mad iff there is no block subspace which is almost disjoint from every element of $\A$.

We dedicate this section to some basic definitions/notations and results of almost disjoint families of block subspaces, and their relationship with the local Ramsey theory of semicoideals of block subspaces.

\subsection{Uncountability of almost disjoint families}
We begin with the most fundamental question of the theory of mad families --- the uncountability of infinite mad families. Consider the following two cardinals:
\begin{align*}
    \avec^* &:= \min\{|\A| : \text{$\A$ is an infinite mad family of subspaces over $\mathbb{F}$}\}, \\
    \avec &:= \min\{|\A| : \text{$\A$ is an infinite mad family of block subspaces over $\mathbb{F}$}\}.
\end{align*}
It is clear that $\avec^* \leq \avec \leq \frak{c}$. By Proposition 2.5 of \cite{S19}, $\aleph_1 \leq \avec$ (in fact, $\non(\M) \leq \avec$), so $\avec$ is a cardinal invariant. One may ask if the same holds for $\avec^*$ --- that is, if $\aleph_1 \leq \avec^*$ is true. Smythe's proof of $\aleph_1 \leq \avec$ heavily relies on the following lemma:

\begin{lemma}[Lemma 2.3, \cite{S19}]
\label{lem:iteration.lemma.for.subspaces}
    Let $A \in E^{[\infty]}$ be an infinite block sequence, and let $a \in E^{[<\infty]}$ be a finite block sequence. Then there exists some $N < \omega$, that depends only on $A$ and $\max(\supp(a))$, such that for all $x > N$ for which $x \notin \c{A}$, 
    \begin{align*}
        \c{a^\frown x} \cap \c{A} = \c{a} \cap \c{A}.
    \end{align*}
\end{lemma}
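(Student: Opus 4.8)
The plan is to prove only the nontrivial inclusion $\c{a^\frown x} \cap \c{A} \subseteq \c{a} \cap \c{A}$, since $\c{a} \subseteq \c{a^\frown x}$ gives the reverse inclusion for free (note also that $a^\frown x$ is a genuine block sequence once $\min(\supp(x)) > \max(\supp(a))$, which the eventual bound $N$ will guarantee). Writing $A = (y_n)_{n<\omega}$ and $M := \max(\supp(a))$, the entire argument will rest on the single structural fact that a block sequence has pairwise disjoint supports, so there is no cancellation: for any finite $F$ and nonzero scalars $(c_n)_{n \in F}$, one has $\supp\bb{\sum_{n \in F} c_n y_n} = \bigcup_{n \in F} \supp(y_n)$, a disjoint union. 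In particular no coordinate of $\sum_{n \in F} c_n y_n$ vanishes unless the corresponding $y_n$ is absent.

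For the choice of $N$, I would set $P := \{n < \omega : \min(\supp(y_n)) \leq M\}$; since $A$ is a block sequence, $\min(\supp(y_n)) \to \infty$, so $P$ is finite. Define $N := \max\bb{\{M\} \cup \{\max(\supp(y_n)) : n \in P\}}$, which manifestly depends only on $A$ and $M = \max(\supp(a))$. By construction, every $y_n$ with $n \in P$ has $\supp(y_n) \subseteq [0,N]$, whereas every $y_n$ with $n \notin P$ has $\min(\supp(y_n)) > M$. The intended role of the ``gap'' $(M,N]$ is to trap any block vector of $A$ that straddles the coordinate $M$.

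Now fix $x$ with $\min(\supp(x)) > N$ and $x \notin \c{A}$, and take $v \in \c{a^\frown x} \cap \c{A}$, written $v = u + \lambda x$ with $u \in \c{a}$ and $\lambda \in \mathbb{F}$. It suffices to show $\lambda = 0$, for then $v = u \in \c{a} \cap \c{A}$. Suppose toward a contradiction that $\lambda \neq 0$, and set $\alpha := \lambda^{-1} v \in \c{A}$ and $\beta := -\lambda^{-1} u \in \c{a}$, so that $x = \alpha + \beta$ with $\supp(\beta) \subseteq [0,M]$. Comparing coordinates in the three ranges $[0,M]$, $(M,N]$, $(N,\infty)$, using $\supp(x) \subseteq (N,\infty)$ and $\supp(\beta) \subseteq [0,M]$, forces $\alpha$ to vanish identically on $(M,N]$. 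Writing $\alpha = \sum_{n \in F} c_n y_n$ with all $c_n \neq 0$, the no-cancellation fact converts this into $\supp(y_n) \cap (M,N] = \emptyset$ for every $n \in F$: hence $n \in F \cap P$ gives $\supp(y_n) \subseteq [0,M]$ (as $\supp(y_n) \subseteq [0,N]$ already), while $n \in F \setminus P$ gives $\supp(y_n) \subseteq (N,\infty)$ (as $\min(\supp(y_n)) > M$ avoids the gap). Splitting $\alpha = \alpha_{\mathrm{low}} + \alpha_{\mathrm{high}}$ along $F \cap P$ and $F \setminus P$ yields $\supp(\alpha_{\mathrm{low}}) \subseteq [0,M]$ and $\supp(\alpha_{\mathrm{high}}) \subseteq (N,\infty)$; restricting $x = \alpha_{\mathrm{low}} + \beta + \alpha_{\mathrm{high}}$ to $[0,M]$ (where $x$ vanishes) gives $\alpha_{\mathrm{low}} = -\beta$, whence $x = \alpha_{\mathrm{high}} \in \c{A}$, contradicting $x \notin \c{A}$.

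The only genuinely delicate point is the possibility that a single $y_n$ straddles $M$, having support both in $[0,M]$ and above; such a vector could in principle enlarge the intersection when $x$ is adjoined. This is exactly what the gap $(M,N]$ combined with no cancellation rules out, and I expect the main (though routine) bookkeeping to be the coordinatewise verification that $\alpha$ vanishes on $(M,N]$ and the resulting clean low/high decomposition of $\alpha$; beyond the disjoint-support observation there is no hard step.
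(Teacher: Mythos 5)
Your proposal is correct. One thing to note at the outset: this paper never proves the lemma at all --- it is imported verbatim as Lemma 2.3 of \cite{S18}, so there is no in-paper argument to compare yours against; the comparison can only be on the merits of your proof itself. On those merits it holds up. Your choice $N := \max\bb{\{M\} \cup \{\max(\supp(y_n)) : n \in P\}}$ with $P := \{n : \min(\supp(y_n)) \leq M\}$ visibly depends only on $A$ and $M = \max(\supp(a))$, as the statement requires. The reduction to showing $\lambda = 0$ in $v = u + \lambda x$ is the right move, and the three-zone analysis is airtight: $\alpha := \lambda^{-1}v \in \c{A}$ must vanish on the gap $(M,N]$, the pairwise-disjointness of the supports $\supp(y_n)$ (your no-cancellation fact) upgrades this to $\supp(y_n) \cap (M,N] = \emptyset$ for every $y_n$ appearing in $\alpha$, and the case split along $P$ then cleanly sorts those $y_n$ into ones supported in $[0,M]$ and ones supported in $(N,\infty)$, so that matching coordinates on $[0,M]$ kills the low part against $\beta$ and leaves $x = \alpha_{\mathrm{high}} \in \c{A}$, contradicting $x \notin \c{A}$. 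You also correctly handle the side conditions that are easy to overlook: $a^\frown x$ is a genuine block sequence because $\min(\supp(x)) > N \geq M$, the trivial inclusion $\c{a} \cap \c{A} \subseteq \c{a^\frown x} \cap \c{A}$ is free, and the contradiction $x \in \c{A}$ is valid even in the degenerate case $\alpha_{\mathrm{high}} = 0$ since $0 \in \c{A}$. This gap-trapping argument is the standard (and, as far as the author is aware, essentially Smythe's) way to prove the lemma, so nothing is lost relative to the cited source.
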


He remarked that removing the ``block'' requirement from this lemma seems difficult. $\aleph_1 \leq \avec^*$ was also stated as Proposition 3.1 of \cite{K00}, but Kolman further assumed that in the countable almost disjoint family of subspaces $\{V_n : n < \omega\}$, $V_n \cap \sum_{i \in I} V_i$ is finite-dimensional for all finite $n \notin I \subseteq \omega$. This is generally not satisfied by families of subspaces that are pairwise almost disjoint. The author raised this question on MathOverflow, of which user527492 provided an affirmative answer (see \cite{u25}). 

\begin{lemma}
\label{lem:avec*.geq.aleph_1}
     $\aleph_1 \leq \avec^*$.
\end{lemma}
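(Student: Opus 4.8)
The plan is to prove the contrapositive: any countably infinite almost disjoint family $\A = \{V_n : n < \omega\}$ of infinite-dimensional subspaces of $E$ fails to be maximal. I will construct an infinite-dimensional subspace $W$ that is almost disjoint from every $V_n$ and differs from each of them, so that $\A \cup \{W\}$ properly extends $\A$. The idea is to build $W = \func{span}\{w_k : k < \omega\}$ greedily: setting $W_k := \langle w_0, \dots, w_{k-1}\rangle$, I choose $w_k$ at stage $k$ to avoid the finite union $W_k \cup \bigcup_{n \le k}(V_n + W_k)$. Note that this argument does not use any block structure, so it applies to arbitrary subspaces and hence bounds $\avec^*$ rather than just $\avec$.

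First I record a structural observation: since $\A$ is infinite and almost disjoint, no member has finite codimension. Indeed, if $V_0$ had finite codimension $c$, then for any other member $V_1$ the map $V_1 \to E/V_0$ has image of dimension at most $c$, forcing $\dim(V_0 \cap V_1) = \infty$ and contradicting almost disjointness. Consequently each $V_n + W_k$ still has infinite codimension, since adjoining the finite-dimensional $W_k$ lowers the codimension by only finitely much; in particular each $V_n + W_k$ is a proper subspace of $E$. The construction is then governed by the following bookkeeping. Writing $W_{k+1} = W_k \oplus \langle w_k \rangle$, applying the rank--nullity identity to the quotient map $W_{k+1} \to E/V_n$ shows that
\begin{align*}
    \dim(W_{k+1} \cap V_n) = \dim(W_k \cap V_n) + \begin{cases} 1 & \text{if } w_k \in V_n + W_k, \\ 0 & \text{otherwise.} \end{cases}
\end{align*}
Thus whenever $w_k \notin V_n + W_k$ for every $k \ge n$, the chain $(W_k \cap V_n)_k$ stabilises from stage $n$ onwards, giving $W \cap V_n = W_n \cap V_n$, of dimension at most $n$. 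Since I will pick $w_k$ outside $V_n + W_k$ for all $n \le k$, this holds for each fixed $n$ and every $k \ge n$, so $W$ is almost disjoint from all $V_n$. Choosing $w_k \notin W_k$ also forces linear independence, so $W$ is infinite-dimensional; and $W \neq V_m$ for every $m$, since otherwise $W \cap V_m = W$ would be infinite-dimensional.

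The one step that genuinely uses the hypotheses — and the main obstacle — is showing that the required $w_k$ exists, i.e. that $E$ is not the union of the finitely many proper subspaces $W_k, V_0 + W_k, \dots, V_k + W_k$. This is exactly where the infinite-codimension observation is essential: over a finite field a proper subspace of finite codimension can contribute to a finite cover (finitely many hyperplanes over $\mathbb{F}_q$ already exhaust the space), so properness alone is insufficient. The clean statement to isolate is that a vector space over any field is never the union of finitely many proper subspaces of \emph{infinite} codimension. Over an infinite field this is the classical fact that a vector space is never a finite union of proper subspaces, proved by the pigeonhole argument on a line $\{y + \lambda x : \lambda \in \mathbb{F}\}$ that must meet some subspace twice. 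Over a finite field it follows from B.~H.~Neumann's lemma on coset coverings: if the additive group $E$ is covered by finitely many subspaces, then it is already covered by those of finite index, and infinite codimension means infinite index, so no cover by infinite-codimension subspaces can exist. Applying this with the subspaces above yields the desired $w_k$, completing the construction and hence establishing $\aleph_1 \le \avec^*$.
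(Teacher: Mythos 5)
Your proof is correct, but the key step is handled by a genuinely different mechanism than in the paper. Both arguments are greedy diagonalizations with the same skeleton (build a linearly independent sequence $w_0, w_1, \dots$ so that $W \cap V_n$ stabilises at a finite stage, via the same rank--nullity bookkeeping), but they differ in where the new vector is drawn from and hence in what avoidance lemma is needed. The paper chooses its $n$-th vector \emph{inside} $V_n$, avoiding the sets $V_n \cap (V_k + W_n)$ for $k < n$; after proving a small claim that these sets are finite-dimensional (if $\dim(V \cap W) < \infty$ and $\dim(U) < \infty$ then $\dim((U+V) \cap W) < \infty$), the existence of the next vector is trivial over any field, since an infinite-dimensional space is never covered by finitely many finite-dimensional subspaces. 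You instead choose $w_k$ anywhere in $E$, avoiding $W_k \cup \bigcup_{n \le k}(V_n + W_k)$, which forces you to (a) observe that every member of an almost disjoint family with at least two members has infinite codimension, and (b) invoke a nontrivial covering theorem --- the classical infinite-field fact together with B.~H.~Neumann's lemma on coset coverings for finite fields --- to rule out a finite cover by infinite-codimension subspaces. You correctly identified that properness alone fails over finite fields, and your use of Neumann's lemma is sound, so the proof goes through. The trade-off: the paper's trick of working inside $V_n$ keeps the argument entirely self-contained and elementary (this is precisely why it sidesteps the finite-field covering issue), whereas your route imports an external group-theoretic result but dispenses with the paper's perturbation claim and yields a slightly more transparent stabilisation argument; as a side effect, the paper's $W$ meets every $V_n$ nontrivially, while yours need not.
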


\begin{proof}
    Let $\{V_n : n < \omega\}$ be a countable family of almost disjoint subspaces of $E$, and we wish to find some infinite-dimensional subspace $W$ which is almost disjoint from $V_n$ for all $n$. We shall first prove a useful claim.

    \begin{claim}
    \label{claim:finite-dimensional.addition}
        Let $U,V,W \subseteq E$ be subspaces such that $V$ and $W$ are almost disjoint, and $U$ is finite-dimensional. Then $U + V$ and $W$ are almost disjoint.
    \end{claim}

    \begin{midproof}
        Define a linear map $\phi : V \times W \to E$ by stipulating that $\phi(v,w) := w - v$. Let $\rho : E \to E/U$ be the canonical quotient map, and let $\psi := \rho \circ \phi$. Note that:
        \begin{align*}
            \dim(\ker(\psi)) \leq \dim(\ker(\phi)) + \dim(\ker(\rho)) < \infty.
        \end{align*}    
        Let $\pi : V \times W \to W$ be the projection map. Observe that for all $w \in W$, we have that:
        \begin{align*}
            &\iffbreak w \in (U + V) \cap W \\
            &\iff w = u + v \text{ for some $u \in U$, $v \in V$} \\
            &\iff w - v \in U \text{ for some $v \in V$} \\
            &\iff (v,w) \in \ker(\psi) \text{ for some $v \in V$}.
        \end{align*}
        This implies that $\pi$ maps $\ker(\psi)$ surjectively onto $(U + V) \cap W$, and since $\ker(\psi)$ is of finite dimension, so is $(U + V) \cap W$.
    \end{midproof}

    We shall define the subspace $W = \c{x_n : n < \omega}$ by defining $x_n$ inductively as follows: Let $x_0 \in V_0$ be any non-zero vector. Now assume that we have defined a linearly independent set $\{x_i : i < n\}$ such that for all $k < n$, we have that:
    \begin{align*}
        \c{x_i : i < n} \cap V_k \subseteq \c{x_i : i \leq k}.
    \end{align*}
    We wish to find some vector $x_n \in V_n$, linearly independent from $\{x_i : i < n\}$, such that for all $k < n$, we have that:
    \begin{align*}
        \c{x_i : i \leq n} \cap V_k \subseteq \c{x_i : i \leq k}.
    \end{align*}
    If we can complete this induction step, then we are done as we have that $W \cap V_n \subseteq \c{x_0,\dots,x_n}$ is finite-dimensional for all $n$.
    
    For each $k < n$, let $U_k := V_k + \c{x_i : i < n}$. By Claim \ref{claim:finite-dimensional.addition}, $V_n \cap U_k$ is of finite dimension. Since no infinite-dimensional vector space is a finite union of finite-dimensional subspaces, we have that:
    \begin{align*}
        V_n \setminus \bigcup_{k<n} V_n \cap U_k \neq \emptyset,
    \end{align*}
    so we may let $x_n$ be any vector in this set. We shall show that this vector $x_n$ works. We first note that $\{x_i : i \leq n\}$ is linearly independent, as $\c{x_i : i < n} \subseteq U_0$ and $x_n \notin U_0$. Now let $k < n$ and $x \in \c{x_i : i \leq n} \cap V_k$, so we have that $x = \sum_{i \leq n} \xi_ix_i \in V_k$. If $\xi_n \neq 0$, then we have that:
    \begin{align*}
        x_n = \xi_n^{-1}\bb{x - \sum_{i<n}\xi_i x_i} \in V_n \cap U_k,
    \end{align*}
    contradicting the choice of $x_n$. Therefore, $\xi_n = 0$, which implies that:
    \begin{align*}
        x \in \c{x_i : i < n} \cap V_k = \c{x_i : i \leq k} \cap V_k,
    \end{align*}
    as desired.
\end{proof}

Smythe asked in \cite{S19} whether $\avec^* = \avec$ holds in $\ZFC$, and this problem remains open to the author's knowledge. In the same paper, Smythe showed that every finite almost disjoint family of block subspaces of cardinality greater than $1$ is not maximal. We extend this fact to arbitrary subspaces.

\begin{proposition}
    Let $\A$ be a finite almost disjoint family of subspaces of $E$ such that $|\A| > 1$. Then $\A$ is not maximal.
\end{proposition}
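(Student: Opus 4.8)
The plan is to produce a single infinite-dimensional subspace $W \subseteq E$ that is almost disjoint from every member of $\A$; since $W$ cannot be almost disjoint from itself, $W \notin \A$, so $\A \subsetneq \A \cup \{W\}$ is a strictly larger almost disjoint family, witnessing that $\A$ is not maximal. Write $\A = \{V_1,\dots,V_k\}$ with $k = |\A| > 1$.

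First I would record that each $V_i$ has infinite codimension in $E$; this is exactly where the hypothesis $|\A| > 1$ enters. Fix some $V_j \in \A$ with $j \neq i$. If $E/V_i$ were finite-dimensional, then the composite $V_j \hookrightarrow E \twoheadrightarrow E/V_i$ would have finite-dimensional image and kernel $V_i \cap V_j$, the latter being finite-dimensional by almost disjointness; this forces $\dim V_j < \infty$, a contradiction. Consequently each $V_i$, and more generally each $V_i + F$ with $F$ finite-dimensional, has infinite codimension in $E$, since adjoining a finite-dimensional space can lower the codimension by at most $\dim F$.

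Next I would build $W = \func{span}\{x_n : n < \omega\}$ by recursion, in the spirit of the construction in Lemma \ref{lem:avec*.geq.aleph_1} but, crucially, choosing each new vector \emph{outside} all of the $V_i$ rather than inside one of them (cycling the chosen vectors through the finitely many $V_i$ would break the almost-disjointness bookkeeping used there). At stage $n$ set $U_i^{(n)} := V_i + \func{span}\{x_0,\dots,x_{n-1}\}$ for each $i \leq k$; by the previous paragraph these are finitely many subspaces of infinite codimension. The key fact I need is that \emph{a vector space over any field is never the union of finitely many subspaces of infinite codimension}. Granting this, I may choose $x_n \in E \setminus \bigcup_{i \leq k} U_i^{(n)}$. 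Since $U_i^{(n)} \supseteq \func{span}\{x_0,\dots,x_{n-1}\}$, the vector $x_n$ is independent from its predecessors, so $W$ is infinite-dimensional. Moreover, if $w = \sum_{j \leq n} \lambda_j x_j \in V_i$ with $\lambda_n \neq 0$, then $x_n \in V_i + \func{span}\{x_0,\dots,x_{n-1}\} = U_i^{(n)}$ (the case $n=0$ reads $x_0 \in V_i = U_i^{(0)}$), contradicting the choice of $x_n$; hence $W \cap V_i = \{0\}$, and in particular $W$ is almost disjoint from every member of $\A$.

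The main obstacle is the quoted covering fact. Over an infinite field it is the classical statement that a vector space is not a finite union of proper subspaces, and the infinite-codimension hypothesis is not even needed. Over a finite field $\mathbb{F}_q$ this fails for proper subspaces in general, so the infinite-codimension hypothesis is essential, and I would prove it by counting. Choosing an exhaustion $F_1 \subseteq F_2 \subseteq \cdots$ of $E$ by finite-dimensional subspaces, the image of $F_m$ in each quotient $E/U_i$ increases to all of $E/U_i$, which is infinite-dimensional; hence the codimension $c_i(m)$ of $U_i \cap F_m$ inside $F_m$ satisfies $c_i(m) \to \infty$ for every $i \leq k$. Since $|U_i \cap F_m| = q^{\dim F_m - c_i(m)}$, we get $\sum_{i \leq k} |U_i \cap F_m| = |F_m| \sum_{i \leq k} q^{-c_i(m)} < |F_m|$ once $m$ is large enough, so $F_m \not\subseteq \bigcup_{i \leq k} U_i$ and a fortiori $E \neq \bigcup_{i \leq k} U_i$. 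This is the only place where the argument genuinely diverges from the infinite-field intuition, and it is where I would spend the most care.
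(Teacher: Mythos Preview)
Your proof is correct and follows the same overall architecture as the paper: recursively choose $x_n$ outside $\bigcup_i \bigl(V_i + \func{span}\{x_0,\dots,x_{n-1}\}\bigr)$, then verify by the usual ``look at the top coefficient'' argument that the resulting span meets each $V_i$ trivially. The construction step and the verification are essentially identical to the paper's.

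Where you diverge is in justifying the covering fact. The paper proves a claim tailored to almost disjoint families: it first reduces to the case of pairwise trivial intersections by passing to the tail space $\func{span}\{e_n : n \geq M\}$, and then, over a finite field, counts inside a large finite-dimensional initial segment $W = \func{span}\{e_n : n < M\}$, using that pairwise disjoint subspaces of $W$ each have dimension at most $\dim(W)/2$. At the recursion step it must check (via Claim~\ref{claim:finite-dimensional.addition}) that the enlarged spaces $U_k = V_k + \func{span}\{x_i : i < m\}$ remain almost disjoint so that the claim applies again. Your route is cleaner and more general: you isolate the single hypothesis that actually matters---each $U_i$ has infinite codimension---and prove directly that finitely many infinite-codimension subspaces cannot cover $E$, via an exhaustion $F_m \uparrow E$ and the estimate $\sum_i q^{-c_i(m)} \to 0$. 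This avoids both the reduction to trivial intersections and the need to re-verify almost disjointness at each step, and it makes no use of the fixed Hamel basis. The paper's argument, on the other hand, stays closer to the concrete combinatorics of block structure already in play elsewhere.
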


\begin{proof}
    Let $\A = \{V_k : k < n\}$ be a finite almost disjoint family of subspaces of $E$. 

    \begin{claim}
    \label{claim:E.not.union}
        $E \neq \bigcup_{k<n} V_k$.
    \end{claim}

    \begin{midproof}
        We first assume that for all $j \neq k$, $V_j \cap V_k = \{0\}$. Let $\mathbb{F}$ be the field that $E$ is defined over. If $\mathbb{F}$ is infinite, then we are done, as any subspace over an infinite field is not a finite union of proper subspaces. Assume that $|\mathbb{F}| = N < \infty$, and suppose for a contradiction that $E = \bigcup_{k<n} V_k$. Let $M$ be some large number (which we shall fix later), and let $W := \c{e_n : n < M}$. Let $U_k := W \cap V_k$, and by taking $M$ large enough we may assume that $U_k$ is non-trivial for all $k$. Note that $W = \bigcup_{k<n} U_k$.
        
        Observe that $\dim(W) = M$, so if $U,U' \subseteq W$ are two subspaces such that $\dim(U),\dim(U') > \frac{M}{2}$, then $U$ and $U'$ intersect non-trivially. Since $\{U_k : k < n\}$ is a collection of proper subspaces of $W$ with pairwise trivial intersections, and $n > 2$, there is at most one $k$ such that $\dim(U_k) > \frac{M}{2}$. We may assume that if such a $k$ exists, then $\dim(U_0) > \frac{M}{2}$. Note that since $U_k$ is non-trivial for $k > 0$ and $U_0 \cap U_k \neq \emptyset$, we have that $\dim(U_0) \leq M - 1$. Consequently, for each $k > 0$, $U_k$ contains at most $N^{\frac{M}{2}}$ vectors, which implies that:
        \begin{align*}
            \mod{\bigcup_{k<n} U_k} \leq (n -1) \cdot N^{\frac{M}{2}} + N^{M-1}.
        \end{align*}
        On the other hand, $|W| = N^M$. Since:
        \begin{align*}
            N^M - ((n - 1)N^{\frac{M}{2}} + N^{M-1}) &= (N - 1)N^{M-1} - (n - 1)N^{\frac{M}{2}} \\
            &\geq N^{M-1} - (n-1)N^\frac{M}{2} \\
            &= (N^{\frac{M}{2} - 1} - n + 1)N^\frac{M}{2} \\
            &> 0 \text{ for $M$ large enough}
        \end{align*}
        we have that $W \supsetneq \bigcup_{k<n} U_k$, a contradiction.

        For the general case, suppose that $\{V_k : k < n\}$ is almost disjoint and $E = \bigcup_{k<n} V_k$. Let $M$ be large enough so that $M > \max(\supp(x))$ for all $x \in V_j \cap V_k$ and $j \neq k$. Let $W' := \c{e_n : n \geq M}$, and we have that $W' = \bigcup_{k<n} W' \cap V_k$. Then $\{W' \cap V_k : k < n\}$ is a collection of infinite-dimensional subspaces of $W'$ with pairwise trivial intersections, contradicting the argument above.
    \end{midproof}

    We shall now construct some $V = \c{x_m : m < \omega} \subseteq E$ which intersects $V_k$ trivially for every $k < n$ by defining $x_m$ inductively. Let $x_0 \in E \setminus \bigcup_{k<n} V_k$ be any vector. Now suppose that we have defined $x_i$ for $i < m$ such that for all $k < n$:
    \begin{align*}
        \c{x_i : i < m} \cap V_k = \{0\}.
    \end{align*}
    For each $k < n$, let $U_k := V_k + \c{x_i : i < m}$. By the claim in the proof of Lemma \ref{lem:avec*.geq.aleph_1}, $\{U_k : k < n\}$ is an almost disjoint family of subspaces of $E$, so by applying Claim \ref{claim:E.not.union} to $\{U_k : k < n\}$, $E \neq \bigcup_{k<n} U_k$. Let $x_m \in E \setminus \bigcup_{k<n} U_k$. Note that $\{x_i : i \leq m\}$ is linearly independent, as $x_i \in U_0$ for all $i < m$ but $x_m \notin U_0$. We shall show that:
    \begin{align*}
        \c{x_i : i \leq m} \cap V_k = \{0\}.
    \end{align*}
    If we can complete this induction step, then we are done as $V \cap V_k = \bigcup_{m<\omega} \c{x_i : i < m} \cap V_k = \{0\}$ for all $k < n$.
    
    Let $y \in \c{x_i : i \leq m} \cap V_k$ for some $k < n$. Then $y = \sum_{i \leq m} \xi_i x_i$ for some $\xi_i \in \mathbb{F}$. If $\xi_m \neq 0$, then we have that:
    \begin{align*}
        x_m = \xi_m^{-1}\bb{y - \sum_{i<m} \xi_i x_i} \in U_k,
    \end{align*}
    contradicting the choice of $x_m$. Therefore, $\xi_m = 0$, which implies that:
    \begin{align*}
        y \in \c{x_i : i < m} \cap V_k = \{0\},
    \end{align*}
    as desired. 
\end{proof}

\subsection{Ideals and semicoideals generated by almost disjoint families}
If $\A$ is an almost disjoint family in $[\omega]^\omega$, then the following families of sets are often studied:
\begin{align*}
    \I(\A) &:= \{Y \subseteq \omega : Y \subseteq^* A_0 \cup \cdots \cup A_{n-1} \text{ for some $A_0,\dots,A_{n-1} \in \A$}\}, \\
    \I^+(\A) &:= \Po(\omega) \setminus \I(\A), \\
    \I^{++}(\A) &:= \{Y \subseteq \omega : \{A \in \A : Y \cap A \text{ is infinite}\} \text{ is infinite}\}, \\
    \H(\A) &:= \I^{++}(\A).
\end{align*}
It is simple to verify that $\I^+(\A) = \I^{++}(\A)$ iff $\A$ is maximal. Motivated by this, Smythe introduced a version of $\H(\A)$ for almost disjoint families of subspaces. $\H(\A)$ is a semicoideal, and Smythe studied the 
conditions that $\A$ must satisfy to make $\H(\A)$ a full semicoideal. 

\begin{definition}
    Let $Y \subseteq E$.
    \begin{enumerate}
        \item $Y$ is \emph{big} if there exists some $A \in E^{[\infty]}$ such that $\c{A} \subseteq Y$.
        
        \item $Y$ is \emph{small} if it is not big.
        
        \item $Y$ is \emph{very small} if for all $Z \subseteq E$ such that $Z$ is small, $Y \cup Z$ is small.
    \end{enumerate}
\end{definition}

Note that if $V \subseteq E$ is a subspace, then $V$ is big iff $V$ is infinite-dimensional.

If $E$ is a vector space over $\mathbb{F}_2$, then by Hindman's theorem, every small set is very small. On the other hand, if $E$ is a vector space over $\mathbb{F} \neq \mathbb{F}_2$, then this is no longer true --- fix any $\mu \in \mathbb{F} \setminus \{0,1\}$. We may define the set:
\begin{align*}
    Y := \{x \in E : x = e_n + y \text{ for some $e_n < y$ or $y = 0$}\}.
\end{align*}
Observe that $Y$ and $Y^c$ are both small --- for any $A \in E^{[\infty]}$, let $x \in \c{A}$ be any non-zero vector. Then $x = \lambda_n(x)e_n + y$ for some $e_n < y$ or $y = 0$ and $\lambda_n(x) \neq 0$, so $\frac{1}{\lambda_n(x)}x \in \c{A} \cap Y$ and $\frac{\mu}{\lambda_n(x)}x \in \c{A} \cap Y^c$. However, since $E = Y \cup Y^c$ and $E$ is big, $Y$ is not very small. 

\begin{lemma}
\label{lem:ad.iff.tail.disjoint}
    Let $A \in E^{[\infty]}$ and let $V \subseteq E$ be a subspace. Then $\c{A} \cap V$ is small iff $\c{A/N} \cap V = \emptyset$ for some $N$. Consequently:
    \begin{enumerate}
        \item If $A,B \in E^{[\infty]}$, then $A$ and $B$ are almost disjoint iff $\c{A/N} \cap \c{B} = \emptyset$ for some $N$.

        \item If $A/N$ and $B$ are almost disjoint for some $N$, then $A$ and $B$ are almost disjoint.
    \end{enumerate}
\end{lemma}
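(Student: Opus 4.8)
The plan is to reduce the ``small''/``big'' dichotomy to a statement about dimension, and then carry out two short linear-algebra arguments. Throughout I read $\c{Y}$ as the subspace $\func{span}(Y)$, so that $\c{A} \cap \c{Y}$ and each $\c{A/N} \cap \c{Y}$ are subspaces, and I interpret the stated ``$\c{A/N} \cap \c{Y} = \emptyset$'' as the assertion that this intersection is trivial. The key preliminary observation is that a subspace $W \subseteq E$ is small if and only if it is finite-dimensional: if $W$ is infinite-dimensional then it contains a block subspace $\c{C}$ (the fact already invoked in \Sec\ref{sec:setup}), so $W$ is big, while any $\c{C}$ with $C \in E^{[\infty]}$ is infinite-dimensional, so a small subspace cannot contain one. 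Hence ``$\c{A}\cap\c{Y}$ is small'' is the same as ``$\c{A}\cap\c{Y}$ is finite-dimensional''.

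For the direction ($\Leftarrow$), suppose $\c{A/N}\cap\c{Y}$ is trivial for some $N$. Writing $A = (x_n)_{n<\omega}$, we have $\c{A} = \c{r_N(A)} \oplus \c{A/N}$ with $\dim\c{r_N(A)} = N$. Let $\pi \colon \c{A} \to \c{A}/\c{A/N}$ be the quotient map; its image has dimension $N$. If $v,w \in \c{A}\cap\c{Y}$ satisfy $\pi(v) = \pi(w)$, then $v - w \in \c{A/N}\cap\c{Y} = \{0\}$, so $\pi$ is injective on $\c{A}\cap\c{Y}$ and $\dim(\c{A}\cap\c{Y}) \leq N < \infty$. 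Thus $\c{A}\cap\c{Y}$ is small.

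For ($\Rightarrow$), suppose $V := \c{A}\cap\c{Y}$ is finite-dimensional. Then $\supp(V) := \bigcup_{v\in V}\supp(v)$ is finite, since $V$ is spanned by finitely many finitely-supported vectors; put $M := \max\supp(V)$ (the statement being trivial if $V = \{0\}$). Choose $N$ so large that $\min\supp(x_N) > M$. Since $A$ is a block sequence, the supports of the $x_i$ are disjoint and left-ordered, so every nonzero $v \in \c{A/N}$ has $\min\supp(v) = \min\supp(x_{i_0}) > M$ for its least active index $i_0 \geq N$; hence $v \notin V$. As $\c{A/N}\cap\c{Y} = \c{A/N}\cap V$ (using $\c{A/N}\subseteq\c{A}$), this intersection is trivial.

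Finally, the two consequences follow by specialisation. For (1), take $Y = \c{B}$, so $\c{Y} = \c{B}$: then $A,B$ almost disjoint means $\c{A}\cap\c{B}$ is finite-dimensional, i.e. small, and the equivalence gives some $N$ with $\c{A/N}\cap\c{B}$ trivial, and conversely. For (2), if $A/N$ and $B$ are almost disjoint then applying the main equivalence to the block sequence $A/N$ yields $N'$ with $\c{A/(N+N')}\cap\c{B}$ trivial, and feeding this back through the equivalence applied to $A$ shows $\c{A}\cap\c{B}$ is small, i.e. $A$ and $B$ are almost disjoint. (Alternatively, the same quotient map gives $\dim(\c{A}\cap\c{B}) \leq N + \dim(\c{A/N}\cap\c{B})$ at once.) None of the steps presents a genuine obstacle; the only point requiring care is the support bookkeeping in ($\Rightarrow$), where one must use that a block sequence has strictly increasing, hence disjoint, supports in order to conclude $\min\supp(v) > M$ for every nonzero tail vector, ruling out cancellation.
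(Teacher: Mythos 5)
Your proof is correct, and it takes a genuinely different route from the paper's. The paper proves the contrapositive (``$\c{A}\cap\c{Y}$ is big iff $\c{A/N}\cap\c{Y}\neq\emptyset$ for all $N$'') by two direct block-sequence arguments: if $\c{B}\subseteq\c{A}\cap\c{Y}$ with $B \in E^{[\infty]}$, then for each $N$ some tail satisfies $B/M \leq A/N$, whence $\c{B/M}\subseteq\c{A/N}\cap\c{Y}$; conversely, if every tail of $A$ meets $\c{Y}$ nontrivially, it greedily builds a block sequence $(y_n)_{n<\omega}$ inside $\c{A}\cap\c{Y}$ by picking $y_n$ from $\c{A/N}\cap\c{Y}$ with $N$ chosen so that $\supp(y_{n-1}) < A/N$. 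You instead reduce ``small'' to ``finite-dimensional'' (for subspaces) by citing the observation from \Sec\ref{sec:setup} that every infinite-dimensional subspace contains a block subspace, and then do linear algebra: a quotient-map injectivity argument for one direction, support bookkeeping for the other. Note that the folklore fact you invoke as a black box is exactly what the paper's greedy construction reproves inline, so the mathematical core is the same and there is no circularity; the difference is packaging. Your packaging buys the explicit bound $\dim(\c{A}\cap\c{Y}) \leq N$ and, more importantly, makes explicit the equivalence ``small $=$ finite-dimensional'' for subspaces, which the paper leaves tacit even though it is needed to pass from the main equivalence to consequences (1) and (2), since almost disjointness is defined by finite-dimensionality of the intersection. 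The paper's packaging is self-contained, relying on no unproved observation, and its construction pattern recurs throughout \Sec\ref{sec:local.ramsey.theory} and \Sec\ref{sec:ad.families}. Finally, your reading of $\c{Y}$ as $\func{span}(Y)$ and of ``$=\emptyset$'' as ``trivial intersection'' is the right one: the paper's own proof implicitly makes both identifications when it selects nonzero vectors from these ``nonempty'' intersections to assemble block sequences.
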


\begin{proof}
    Suppose that $\c{A} \cap V$ is big. Let $B \in E^{[\infty]}$ such that $\c{B} \subseteq \c{A} \cap V$. For any $N$, there exists some $M$ such that $B/M \leq A/N$. This implies that $\c{B/M} \subseteq \c{A/N} \cap V$, so in particular, $\c{A/N} \cap V \neq \emptyset$ for all $N$.

    Conversely, suppose that $\c{A/N} \cap V \neq \emptyset$ for all $N$. We construct some $B = (y_n)_{n<\omega} \in E^{[\infty]}$ such that $\c{B} \subseteq \c{A} \cap V$ inductively as follows: Assuming that $y_0,\dots,y_{n-1}$ have been defined so far, let $N$ be large enough so that $\supp(y_{n-1}) < A/N$. We let $y_n \in \c{A/N} \cap V$ be any element. Since $\c{A} \cap V$ is a subspace, we have that $\c{B} \subseteq \c{A} \cap V$, so $\c{A} \cap V$ is big.
\end{proof}

\begin{lemma}
\label{lem:ad.is.very.small}
    Let $U,V \subseteq E$ be subspaces.
    \begin{enumerate}
        \item If $U \cap V$ is small, then $U \cap V$ is very small.

        \item If $U \cap V$ is small, then $U \setminus V$ is very small.
    \end{enumerate}
    Consequently, if $U$ is infinite-dimensional, then it is not possible to have both $U \cap V$ and $U \setminus V$ to be small.
\end{lemma}

\begin{proof}
    Assume that $U \cap V$ is small. Let $Y \subseteq E$, and suppose that $\c{A} \subseteq (U \cap V) \cup Y$ for some $A = (x_n)_{n<\omega} \in E^{[\infty]}$. Since $U \cap V$ is a finite-dimensional space, we may let $N$ be large enough so that $\supp(x_N) > y$ for all $y \in U \cap V$. Then $\c{A/N} \cap (U \cap V) = \emptyset$, so $\c{A/N} \subseteq Y$. Therefore, $Y$ is big.

    Now assume that $U \setminus V$ is small. Let $Y \subseteq E$, and suppose that $\c{A} \subseteq (U \setminus V) \cup Y$. We consider three cases. For the first case, suppose that $\c{A} \cap U$ is small. By Lemma \ref{lem:ad.iff.tail.disjoint}, let $N$ be large enough so that $\c{A/N} \cap U = \emptyset$. Then this implies that $\c{A/N} \subseteq Y$, so $Y$ is big.

    For the second case, suppose that $\c{A} \cap V$ is big. Let $B \in E^{[\infty]}$ be such that $\c{B} \subseteq \c{A} \cap V$. Then $\c{B} \subseteq (U \setminus V) \cup Y$, and since $\c{B} \cap (U \setminus V) = \emptyset$, we have that $\c{B} \subseteq Y$. Therefore, $Y$ is big.

    For the last case, suppose that $\c{A} \cap U$ is big but $\c{A} \cap V$ is small. Let $B \in E^{[\infty]}$ be such that $\c{B} \subseteq \c{A} \cap U$, and note that $\c{B} \cap V$ is also small. By Lemma \ref{lem:ad.iff.tail.disjoint}, let $N$ be large enough so that $\c{B/N} \cap V = \emptyset$. Since $\c{B/N} \subseteq U$, this implies that $\c{B/N} \subseteq U \setminus V$, contradicting our assumption that $U \setminus V$ is small.
\end{proof}

\begin{definition}
\label{def:ad.semi-ideals.and.semicoideals}
    Let $\A$ be an almost disjoint family of subspaces. We then define the following:
    \begin{align*}
        \I(\A) &:= \ss{Y \subseteq E : \m{Y \setminus (V_0 \cup \cdots \cup V_{n-1}) \text{ is small} \\ \text{for some } V_0,\dots,V_{n-1} \in \A}}, \\
        \I^+(\A) &:= \Po(E) \setminus \I(\A), \\
        \I^{++}(\A) &:= \{Y \subseteq E : \{V \in \A : Y \cap V \text{ is big}\} \text{ is infinite}\}, \\
        \H^-(\A) &:= \{B \in E^{[\infty]} : \c{B} \in \I^+(\A)\}, \\
        \H(\A) &:= \{B \in E^{[\infty]} : \c{B} \in \I^{++}(\A)\}.
    \end{align*}
\end{definition}

\begin{lemma}
\label{lem:plusplus.in.plus}
    Let $\A$ be an infinite almost disjoint family of subspaces. Then $\I^{++}(\A) \subseteq \I^+(\A)$ (so $\H(\A) \subseteq \H^-(\A)$).
\end{lemma}

\begin{proof}
    Let $Y \in \I(\A)$, and let $V_0,\dots,V_{n-1} \in \A$ be such that $Y \setminus (V_0 \cup \cdots \cup V_{n-1})$ is small. Then for any $U \in \A$ such that $U \neq V_i$ for any $i < n$, we have that:
    \begin{align*}
        Y \cap U &\subseteq \bb{Y \setminus \bigcup_{i<n} V_i} \cup \bb{U \cap \bigcup_{i<n} V_i} \\
        &= \bb{Y \setminus \bigcup_{i<n} V_i} \cup \bigcup_{i<n} V_i \cap U.
    \end{align*}
    By Lemma \ref{lem:ad.is.very.small}, $V_i \cap U$ is very small for all $i < n$, so $\bigcup_{i<n} V_i \cap U$ is very small. Since $Y \setminus \bigcup_{i<n} V_i$ is small, $Y \cap U$ is small. Therefore:
    \begin{align*}
        \{U \in \A : Y \cap U \text{ is big}\} \subseteq \{V_0,\dots,V_{n-1}\},
    \end{align*}
    so $Y \notin \I^{++}(\A)$. 
\end{proof}

\begin{lemma}
\label{lem:I^+(A).dichotomy}
    Let $\A$ be an infinite almost disjoint family of subspaces, and let $U \subseteq E$ be a subspace. The following are equivalent:
    \begin{enumerate}
        \item $U \in \I^+(\A)$.

        \item $U \in \I^{++}(\A)$ or there exists some $B \in E^{[\infty]}$, with $\c{B} \subseteq U$, such that $B$ is almost disjoint from every element of $\A$.
    \end{enumerate}
    Consequently, if $\A$ is mad, then $\I^+(\A) = \I^{++}(\A)$ (hence $\H^-(\A) = \H(\A)$).
\end{lemma}

\begin{proof}    
    \underline{(1)$\implies$(2):} Let $U \in \I^+(\A)$. Suppose that for all $B \in E^{[\infty]}$ such that $\c{B} \subseteq U$, $B$ is not almost disjoint from some element of $\A$. We let $V_0 \in \A$ be compatible with $U$ (i.e. $U \cap V_0$ is big), and let $B_0 \in E^{[\infty]}$ be such that $\c{B_0} \subseteq U \cap V_0$. Clearly $\c{B_0} \in \I(\A)$, so we have that $U \setminus \c{B_0} \in \I^+(\A)$ (for otherwise $U \in \I(\A)$). Let $B_1' \in E^{[\infty]}$ be such that $\c{B_1'} \subseteq U \setminus \c{B_0}$. By our assumption, $B_1'$ is compatible with some $V_1 \in \A$, so let $B_1 \in E^{[\infty]}$ be such that $\c{B_1} \subseteq \c{B_1'} \cap V_1$. By a similar reasoning as above, we have that $U \setminus (\c{B_0} \cup \c{B_1}) \in \I^+(\A)$. We may repeat this process indefinitely, and we have that $\{V_n : n < \omega\} \subseteq \A$ is such that $U$ is compatible with $V_n$ for all $n$. Therefore, $U \in \I^{++}(\A)$.

    \underline{(2)$\implies$(1):} If $U \in \I^{++}(\A) \subseteq \I^+(\A)$, then we are done, so assume that there exists some $B \in E^{[\infty]}$ such that $\c{B} \subseteq U$ and is almost disjoint from every element of $\A$. Then for any $V_0,\dots,V_{n-1} \in \A$, by Lemma \ref{lem:ad.iff.tail.disjoint} we may let $N$ be large enough so that $\c{B/N} \cap V_i = \emptyset$ for all $i < n$. This implies that $\c{B/N} \subseteq U \setminus \bigcup_{i<n} V_i$. Therefore, $B/N \in \H^-(\A)$, so $U \in \I^+(\A)$ as $\I^+(\A)$ is $\leq$-upward closed.
\end{proof}

We conclude this part with three useful lemmas that we shall use repeatedly in this paper.

\begin{lemma}
\label{lem:infinite.compatibility.lemma.1}
    Let $A \in E^{[\infty]}$, and suppose that $\{B_n : n < \omega\} \subseteq E^{[\infty]}$ is such that $B_n \leq A$ for all $n$. Then there exists some $B \leq A$ such that $B$ is compatible with $B_n$ for all $n$, and:
    \begin{align*}
        \c{B} \subseteq \bigc{\bigcup_{n<\omega} \c{B_n}} \subseteq \c{A}. \tag{$*$}
    \end{align*}
\end{lemma}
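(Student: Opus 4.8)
The plan is to build $B$ directly as a block sequence $(y_k)_{k<\omega}$ whose terms are drawn from the block bases of the $B_n$, arranged so that each $B_n$ contributes infinitely many terms. Fix a surjection $\sigma \colon \omega \to \omega$ every fibre $\sigma^{-1}(n)$ of which is infinite (for instance, take $\sigma$ to read off the first coordinate of a pairing decomposition of $k$). At stage $k$ I would use $\sigma(k)$ to decide which $B_{\sigma(k)}$ the next vector is taken from, so that in the end every index $n$ is visited infinitely often.

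I would construct $(y_k)_{k<\omega}$ by recursion. Having chosen $y_0 < \cdots < y_{k-1}$, write $B_{\sigma(k)} = (b^{\sigma(k)}_j)_{j<\omega}$; since this is an infinite block sequence, $\min(\supp(b^{\sigma(k)}_j)) \to \infty$ as $j \to \infty$, so I may pick $j$ large enough that $\max(\supp(y_{k-1})) < \min(\supp(b^{\sigma(k)}_j))$ and set $y_k := b^{\sigma(k)}_j$. Then $y_{k-1} < y_k$ and $y_k \in \c{B_{\sigma(k)}}$. Setting $B := (y_k)_{k<\omega}$ yields a strictly $<$-increasing sequence, hence an infinite block sequence $B \in E^{[\infty]}$.

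It then remains to verify the three claims. Since each $y_k \in \c{B_{\sigma(k)}} \subseteq \bigcup_{n<\omega} \c{B_n}$, taking spans gives $\c{B} \subseteq \bigc{\bigcup_{n<\omega} \c{B_n}}$; and because every $\c{B_n} \subseteq \c{A}$ while $\c{A}$ is a subspace, $\bigc{\bigcup_{n<\omega} \c{B_n}} \subseteq \c{A}$. This establishes $(*)$, and in particular $\c{B} \subseteq \c{A}$, i.e. $B \leq A$. For compatibility, fix $n$; as $\sigma^{-1}(n)$ is infinite, infinitely many terms $y_k$ lie in $\c{B_n}$, and these are linearly independent (being a subfamily of the block sequence $B$), so they span an infinite-dimensional subspace contained in $\c{B} \cap \c{B_n}$. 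Hence $B$ is compatible with $B_n$ for every $n$.

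There is no substantial obstacle here; the only point requiring care is the bookkeeping. The conclusion we need is \emph{compatibility}, that is, $\c{B} \cap \c{B_n}$ must be infinite-dimensional for each $n$ — a single term of $B$ lying in $\c{B_n}$ would not suffice. This is precisely why I insist that every fibre $\sigma^{-1}(n)$ be infinite, so that each $B_n$ contributes infinitely many linearly independent terms to $B$, while the recursive choice of support keeps $B$ an honest block sequence inside $\c{A}$.
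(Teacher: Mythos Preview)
Your proof is correct and follows essentially the same approach as the paper's: both fix an infinite-to-one surjection (the paper calls it $\Gamma$), recursively pick the $k$-th term of $B$ from $\c{B_{\sigma(k)}}$ above the previously chosen terms, and then verify compatibility via the infinite fibres. Your write-up is in fact more explicit than the paper's in justifying why such $y_k$ exist and why the resulting intersection is infinite-dimensional.
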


\begin{proof}
    Let $\Gamma : \omega \to \omega$ be any infinite-to-one surjection, i.e. a surjection such that $\Gamma^{-1}(m)$ is infinite for all $m < \omega$. We define $B = (x_n)_{n<\omega}$ inductively as follows: If $(x_k)_{k<n}$ has been defined, let $x_n \in \c{B_{\Gamma(n)}}$ be any element such that $(x_k)_{k<n} < x_n$. Then $(x_n)_{n\in\Gamma^{-1}(m)}$ witnesses that $B$ and $B_m$ are compatible for all $m < \omega$, and since $x_n \in \bigcup_{m<\omega} \c{B_m}$ for all $n$, the identity ($*$) is satisfied. 
\end{proof}

\begin{lemma}
\label{lem:infinite.compatibility.lemma.2}
    Let $\A$ be an infinite almost disjoint family of subspaces, and let $U \subseteq E$ be a subspace. Suppose further that $U \in \I^+(\A)$ and there does not exist any $B \in E^{[\infty]}$ such that $\c{B} \subseteq U$ and $B$ is almost disjoint from every element of $\A$. Then there exists an infinite set $\{V_n : n < \omega\} \subseteq \A$ and an infinite disjoint family $\{B_n : n < \omega\}$ such that for all $n$, $\c{B_n} \subseteq V_n \cap U$. .
\end{lemma}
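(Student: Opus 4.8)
The plan is to construct the sequences $(A_n)_{n<\omega}$ and $(B_n)_{n<\omega}$ by recursion, maintaining as an invariant that the ``residual'' set $Y_n := \c{Y} \setminus (\c{B_0} \cup \cdots \cup \c{B_{n-1}})$ remains in $\I^+(\A)$ at every stage, starting from $Y_0 = \c{Y}$, which lies in $\I^+(\A)$ by hypothesis. This mirrors the construction inside the proof of Lemma \ref{lem:I^+(A).dichotomy}, but two refinements are needed to reach the stated conclusion: the $A_n$ must be pairwise distinct (so that $\{A_n : n<\omega\}$ is genuinely infinite) and the $\c{B_n}$ must be pairwise disjoint.

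First I would unwind the definition of $\I^+(\A)$: membership $Y_n \in \I^+(\A)$ is exactly the statement that $Y_n \setminus (\c{A_0'} \cup \cdots \cup \c{A_{m-1}'})$ is big for \emph{every} finite subfamily of $\A$. Applying this to the previously chosen elements, the set $Y_n \setminus (\c{A_0} \cup \cdots \cup \c{A_{n-1}})$ is big, hence contains the span of some block subspace $C_n$. This stronger choice of $C_n$ is the device that forces distinctness. Since $\c{C_n} \subseteq \c{Y}$, the second hypothesis forbids $C_n$ from being almost disjoint from every element of $\A$, so $C_n$ is compatible with some $A_n \in \A$, i.e.\ $\c{C_n} \cap \c{A_n}$ is big. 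Because $\c{C_n}$ meets each of $\c{A_0}, \dots, \c{A_{n-1}}$ only trivially (by the choice of $C_n$), this $A_n$ must differ from all its predecessors. I would then extract a block subspace $B_n$ with $\c{B_n} \subseteq \c{C_n} \cap \c{A_n}$ (for instance taking $\c{B_n} = \c{C_n} \cap \c{A_n}$ via Lemma \ref{lem:block.subspaces.intersection.lemma}); automatically $B_n \leq A_n$ and $\c{B_n} \subseteq \c{C_n} \subseteq Y_n \subseteq \c{Y}$.

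It then remains to check that the invariant is preserved, i.e.\ that $Y_{n+1} = Y_n \setminus \c{B_n} \in \I^+(\A)$. The crucial point is that $\c{B_n} \subseteq \c{A_n}$: were $Y_{n+1}$ in $\I(\A)$, witnessed by some finite subfamily, then each $\c{B_i}$ can be absorbed into the corresponding $\c{A_i}$ slot, and using that a subset of a small set is small one concludes that $\c{Y}$ itself lies in $\I(\A)$, contradicting $Y_0 \in \I^+(\A)$. Finally, since $\c{B_m} \subseteq Y_m \subseteq \c{Y} \setminus \c{B_n}$ whenever $m > n$, the family $\{B_n\}$ is pairwise disjoint, while the distinctness of the $A_n$ gives the required infinite set $\{A_n : n<\omega\} \subseteq \A$; the closing remark that these properties force $\c{Y} \in \I^{++}(\A)$ is then immediate, since for each (distinct) $A_n$ the set $\c{Y} \cap \c{A_n} \supseteq \c{B_n}$ is big.

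The main obstacle I anticipate is precisely the distinctness of the $A_n$: the naive recursion produces compatible $A_n$ without ensuring they are new, and one could in principle keep selecting the same element of $\A$. The remedy is the strengthened choice of $C_n$ inside $Y_n \setminus (\c{A_0} \cup \cdots \cup \c{A_{n-1}})$, whose bigness is supplied by the defining property of $\I^+(\A)$ rather than merely by $Y_n$ being big. Verifying that this refinement does not disturb the $\I^+$-invariant --- which rests entirely on the containment $\c{B_n} \subseteq \c{A_n}$ --- is the step requiring the most care.
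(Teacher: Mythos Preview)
Your proposal is correct and follows the same essential recursion as the paper: pick a block subspace inside the current residual, find a compatible $A_n\in\A$, and extract $B_n$ inside the intersection. The only organisational difference is that the paper runs the recursion transfinitely until $\c{Y}\setminus\bigcup_\beta\c{B_\beta}$ becomes small and then argues afterwards that the set $\{A_\beta\}$ must be infinite (using $\c{Y}\in\I^+(\A)$), whereas you build distinctness of the $A_n$ into each step by choosing $C_n$ inside $\c{Y}\setminus(\c{A_0}\cup\cdots\cup\c{A_{n-1}})$. Your version is slightly cleaner in that it yields the $\omega$-sequence directly; note also that since $\c{B_i}\subseteq\c{A_i}$, the separate invariant $Y_n\in\I^+(\A)$ is not really needed---the bigness of $\c{Y}\setminus(\c{A_0}\cup\cdots\cup\c{A_{n-1}})$ already follows straight from $\c{Y}\in\I^+(\A)$ and gives both distinctness and disjointness in one stroke.
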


\begin{proof}
    This is similar to showing (1)$\implies$(2) of Lemma \ref{lem:I^+(A).dichotomy}. Since $U \in \I^+(\A)$, $U$ is big, so there exists some $B_0'$ such that $\c{B_0'} \subseteq U$. By our assumption, $B_0'$ is compatible with some $V_0 \in \A$. Let $B_0$ be such that $\c{B_0} \subseteq \c{B_0'} \cap V_0$. Since $U \in \I^+(\A)$, $U \setminus V_0$ is big. Given some $n < \omega$, suppose that we have defined an infinite disjoint family $\{B_k : k < n\}$ and $\{V_k : k < n\} \subseteq \A$ such that $\c{B_k} \subseteq V_k \cap U$ for all $k < n$. Note that $U \setminus \bigcup_{k<n} \c{B_k}$ is not small as $U \in \I^+(\A)$. Thus, we may let $B_n'$ be such that $\c{B_n'} \subseteq U \setminus \bigcup_{k<n} \c{B_k}$, and let $V_n \in \A$ be compatible with $B_n'$. We then let $B_n$ be such that $\c{B_n} \subseteq \c{B_n'} \cap V_n$, and $\{B_n : n < \omega\}$ and $\{V_n : n <\omega\}$ are as desired.
\end{proof}

\begin{lemma}
\label{lem:block.refinement.lemma}
    Let $\A$ be an infinite almost disjoint family of subspaces, and let $U \subseteq E$ be a subspace. If $U \in \I^+(\A)$, then there exists some $B \in \H^-(\A)$ such that $\c{B} \subseteq U$. If furthermore $U \in \I^{++}(\A)$, then $B$ may be chosen such that $B \in \H(\A)$.
\end{lemma}

\begin{proof}
    If $U \notin \I^{++}(\A)$, then by Lemma \ref{lem:I^+(A).dichotomy} there exists some $B \in E^{[\infty]}$ such that $\c{B} \subseteq U$ and $B$ is almost disjoint from every element of $\A$. Then $B \in \H^-(\A)$ and $\c{B} \subseteq U$, so we're done. Otherwise, let $\{V_n : n < \omega\} \subseteq \A$ be such that for all $n$, $\c{B_n} \subseteq V_n \cap U$ for some $B_n \in E^{[\infty]}$. By Lemma \ref{lem:infinite.compatibility.lemma.1}, there exists some $B \in E^{[\infty]}$ such that:
    \begin{align*}
        \c{B} \subseteq \left\langle\bigcup_{n<\omega}\c{B_n}\right\rangle \subseteq U,
    \end{align*}
    and $B$ is compatible with $B_n$ for all $n$. This implies that $B$ is compatible with $V_n$ for all $n$, so $B \in \H(\A)$ and $\c{B} \subseteq U$, as desired.
\end{proof}

\subsection{Selectivity properties revisited}
We now return to studying the combinatorial properties of the semicoideals $\H^-(\A)$ and $\H(\A)$. In Mathias' process of proving that there are no analytic mad families of $[\N]^\infty$, he proved that if $\A$ is an almost disjoint family in $[\N]^\infty$, then $\H^-(\A)$ is a selective coideal. In \cite{S19}, Smythe proved that if $\A$ is a mad family of subspaces, then $\H(\A)$ is a (p)-semicoideal. We prove a few extensions of this result. 

\begin{proposition}
\label{prop:H^-(A).is.(p)}
    Let $\A$ be an infinite almost disjoint family of subspaces. Then $\H^-(\A)$ is a (p)-semicoideal.
\end{proposition}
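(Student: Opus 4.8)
The statement $\H^-(\A)$ is clearly a semicoideal, so it remains to verify the (p)-property. The plan is to build a weak diagonalisation of a given $\leq$-decreasing sequence $(B_n)_{n<\omega}$ in $\H^-(\A)$ directly as an infinite block sequence $C = (x_k)_{k<\omega}$ with $x_k \in \c{B_k}$ and $x_{k-1} < x_k$. Any such $C$ is automatically $<$-increasing (hence a genuine block subspace), satisfies $C \leq B_0$ since $\c{x_k} \subseteq \c{B_k} \subseteq \c{B_0}$, and satisfies $C \leq^* B_n$ for every $n$ since $x_k \in \c{B_k} \subseteq \c{B_n}$ for all $k \geq n$. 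Thus the entire content is to arrange that $\c{C} \in \I^+(\A)$, i.e. that $C \in \H^-(\A)$. By Lemma~\ref{lem:I^+(A).dichotomy} together with the inclusion $\I^{++}(\A) \subseteq \I^+(\A)$ noted in its proof, it suffices to ensure that $C$ is \emph{either} compatible with infinitely many members of $\A$ \emph{or} almost disjoint from every member of $\A$. Which alternative is available is governed by the set $F_0 := \bigcap_{n<\omega}\{A \in \A : \c{B_n} \cap \c{A} \text{ is big}\}$ of members of $\A$ compatible with every $B_n$; note that since any such $C$ has a tail inside each $\c{B_n}$ (by Lemma~\ref{lem:ad.iff.tail.disjoint}), $C$ can be compatible only with members of $F_0$.

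\textbf{Case 1: $F_0$ is infinite.} Fix distinct $A_0, A_1, \dots \in F_0$ and an infinite-to-one surjection $\Gamma : \omega \to \omega$ as in Lemma~\ref{lem:infinite.compatibility.lemma.1}. At stage $k$ the intersection $\c{B_k} \cap \c{A_{\Gamma(k)}}$ is big, hence a block subspace by Lemma~\ref{lem:block.subspaces.intersection.lemma}, so it contains a vector $x_k$ with $x_{k-1} < x_k$. Then for each $j$ the vectors $\{x_k : \Gamma(k) = j\}$ form an infinite block sequence inside $\c{A_j}$, witnessing that $C$ is compatible with $A_j$. Hence $\c{C} \in \I^{++}(\A) \subseteq \I^+(\A)$.

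\textbf{Case 2: $F_0 = \{A_1, \dots, A_p\}$ is finite.} Here I build $C$ almost disjoint from every member of $\A$. For $A \in \A \setminus F_0$ this is automatic: such an $A$ is almost disjoint from some $B_{k_A}$, and since $C \leq^* B_{k_A}$, Lemma~\ref{lem:ad.iff.tail.disjoint} gives that $\c{C} \cap \c{A}$ is finite-dimensional. To handle $A_1, \dots, A_p$, I will choose at stage $k$ a vector $x_k \in \c{B_k}$ with $x_{k-1} < x_k$ and $x_k \notin \bigcup_{i \leq p}(\c{A_i} + \c{r_k(C)})$. Granting this, a top-index argument shows that each $\c{C} \cap \c{A_i}$ is finite-dimensional: if $0 \neq v = \sum_k \lambda_k x_k \in \c{A_i}$ has top index $m \geq 1$, then $x_m \in \c{A_i} + \c{r_m(C)}$, contradicting the choice of $x_m$. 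Thus $C$ is almost disjoint from every member of $\A$, and so $\c{C} \in \I^+(\A)$ by Lemma~\ref{lem:I^+(A).dichotomy}.

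The main obstacle is verifying that the required $x_k$ exists in Case 2, i.e. that $\c{B_k}$ contains nonzero vectors of arbitrarily large support outside $\bigcup_{i \leq p}(\c{A_i} + U)$, where $U := \c{r_k(C)}$ is finite-dimensional. Suppose not; then $\c{B_k/N} \subseteq \bigcup_{i \leq p}(\c{A_i} + U)$ for some $N$. Since $\c{B_k} \in \I^+(\A)$, the set $\c{B_k} \setminus \bigcup_{i \leq p}\c{A_i}$ is big, and passing to a tail yields a block subspace $\c{D} \subseteq \c{B_k/N}$ with $\c{D} \cap \c{A_i} = \{0\}$ for every $i$. The inclusion $\c{D} \subseteq \bigcup_{i \leq p}(\c{A_i} + U)$ forces $\c{D} = \bigcup_{i \leq p}\big(\c{D} \cap (\c{A_i} + U)\big)$; but each summand injects into the finite-dimensional quotient $(\c{A_i} + U)/\c{A_i}$ (its kernel being $\c{D} \cap \c{A_i} = \{0\}$), so each is finite-dimensional. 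This exhibits the infinite-dimensional space $\c{D}$ as a finite union of finite-dimensional subspaces, which is impossible. This contradiction yields the vector $x_k$, completing Case 2 and the proof.
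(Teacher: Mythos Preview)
Your proof is correct, but the route differs substantially from the paper's. The paper does not split on the set $F_0$. Instead it argues: if some weak diagonalisation of $(B_n)$ is almost disjoint from all of $\A$, done; otherwise it builds, by an outer induction on $m$, weak diagonalisations $C_m$ of $(B_n)$ with $C_m \leq A_m$ for pairwise distinct $A_m \in \A$, and then takes a further weak diagonalisation of $(C_m)_{m<\omega}$. The inductive step relies on Lemma~\ref{lem:iteration.lemma.for.subspaces} to manufacture an auxiliary sequence $(B_n^m)_n$ below $(B_n)_n$ that is almost disjoint from the previously chosen $A_0,\dots,A_{m-1}$, so that the next $A_m$ is forced to be new.

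Your argument is more direct and more elementary: the single sequence $C = (x_k)$ is built in one pass, and the dichotomy on $|F_0|$ replaces the paper's outer induction. In Case~1 you exploit that $F_0$ already supplies infinitely many targets to hit, so no iterated diagonalisation is needed. In Case~2 your dimension-counting argument (the quotient $(\c{A_i}+U)/\c{A_i}$ is finite-dimensional, and an infinite-dimensional space is not a finite union of finite-dimensional subspaces) substitutes for the paper's appeal to Lemma~\ref{lem:iteration.lemma.for.subspaces}; it is self-contained and arguably cleaner. The paper's approach, on the other hand, fits the pattern of ``diagonalise, refine, diagonalise again'' used elsewhere in the section, and makes the role of the iteration lemma explicit. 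Two minor remarks on your write-up: the top-index argument in Case~2 also covers $m=0$ (since $\c{r_0(C)}=\{0\}$), so in fact $\c{C}\cap\c{A_i}=\{0\}$; and your invocation of Lemma~\ref{lem:I^+(A).dichotomy} is only needed for the inclusion $\I^{++}(\A)\subseteq\I^+(\A)$, while the ``almost disjoint from all of $\A$'' alternative lands in $\I^+(\A)$ directly from the definition (so the infiniteness hypothesis of that lemma is not actually required here).
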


\begin{proof}
    Let $(B_n)_{n<\omega}$ be a $\leq$-decreasing sequence in $\H^-(\A)$. If there exists some $C$ which weakly diagonalises $(B_n)_{n<\omega}$ and $C$ is almost disjoint from every element of $\A$, then $C \in \H^-(\A)$ and we are done, so assume otherwise. Let $C_0$ be a weak diagonalisation of $(B_n)_{n<\omega}$. By our assumption, $C_0$ is compatible with some $V_0 \in \A$, so by taking a further block subspace of $C_0$ if necessary, we may assume that $\c{C_0} \subseteq V_0$. Now assume, for the induction hypothesis, that we have defined $C_k,V_k$ for $k < m$ such that:
    \begin{enumerate}
        \item $C_k \leq B_k$ is a weak diagonalisation of $(B_n)_{n<\omega}$.

        \item $\c{C_k} \subseteq V_k$ and $V_k \in \A$.

        \item If $i,j < m$ then $V_i \neq V_j$.
    \end{enumerate}

    \begin{claim}
        There exists a $\leq$-decreasing sequence $(B_n^m)_{n<\omega}$ such that for all $n$:
        \begin{enumerate}
            \item $B_n^m \leq B_n$, and;

            \item $B_n^m$ is almost disjoint from $V_k$ for all $k < m$.
        \end{enumerate}
    \end{claim}

    \begin{midproof}
        For any $n$, since $B_n \in \H^-(\A)$ there exists some $D_n \leq B_n$ that is disjoint from $V_k$ for all $k < m$. Let $\Gamma : \omega \to \omega$ be any infinite-to-one surjection. We define $D = (x_n)_{n<\omega}$ as follows: Since $D_{\Gamma(0)}$ is almost disjoint from $V_k$ for all $k < m$, we may let $x_0 \in \c{D_{\Gamma(0)}} \setminus \bigcup_{k<m} V_k$ be any vector (which exists by Lemma \ref{lem:ad.is.very.small}). This implies that $\c{x_0} \cap V_k = \emptyset$ for all $k < m$. Now assume that $(x_i)_{i<n}$ has been defined, and for all $k < m$:
        \begin{align*}
            \c{x_0,\dots,x_{n-1}} \cap V_k = \emptyset.
        \end{align*}
        By Claim \ref{claim:finite-dimensional.addition} in the proof of Lemma \ref{lem:avec*.geq.aleph_1}, for each $k < m$ since $D_{\Gamma(n)}$ and $V_k$ are almost disjoint, we have that $D_{\Gamma(n)}$ and $V_k + \c{x_0,\dots,x_{n-1}}$ are also almost disjoint. Let $x_n \in \c{D_{\Gamma(n)}} \setminus \bigcup_{k<m} (V_k + \c{x_0,\dots,x_{n-1}})$ be any vector (again, which exists by Lemma \ref{lem:ad.is.very.small}). We claim that for all $k < m$:
        \begin{align*}
            \c{x_0,\dots,x_n} \cap V_k = \emptyset.
        \end{align*}
        Otherwise, for any $k < m$ let $y \in \c{x_0,\dots,x_n} \cap V_k$ be any vector, so $y = \sum_{i \leq n} \xi_i x_i$. Since $\c{x_0,\dots,x_{n-1}} \cap V_k = \emptyset$, we have that $\xi_n \neq 0$. But this implies that:
        \begin{align*}
            x_n = \frac{1}{\xi_n}\bb{y - \sum_{i<n} \xi_ix_i} \in V_k + \c{x_0,\dots,x_{n-1}},
        \end{align*}
        contradicting our choice of $x_n$. Then $D = (x_n)_{n<\omega}$ is disjoint from $V_k$ for all $k < m$, as:
        \begin{align*}
            \c{D} \cap V_k &= \bigcup_{i<\omega} \c{x_0,\dots,x_i} \cap V_k = \emptyset.
        \end{align*}
        Let $B_n^m := (x_i)_{\Gamma(i) \geq n}$. Since $\Gamma$ is infinite-to-one, $B_n^m$ is an infinite block sequence for all $n,m$. Then $B_n^m \leq B_n$, $(B_n^m)_{n<\omega}$ is a $\leq$-decreasing sequence, and $B_n^m \leq D$, so $B_n^m$ is almost disjoint from $V_k$ for all $k < m$. 
    \end{midproof}
    
    Let $C_m \leq B_m^m$ be a weak diagonalisation of $(B_n^m)_{n<\omega}$. By our assumption at the beginning, there exists some $V_m \in \A$ which is compatible with $C_m$, and we may assume that $\c{C_m} \subseteq V_m$. Since $C_m \leq B_m^m$, and $B_m^m$ is almost disjoint from $V_k$ for all $k < m$, we have that $V_k \neq V_m$ for all $k < m$. This completes the inductive definition of $C_m,V_m$ for $m < \omega$.
    
    Let $\Gamma : \omega \to \omega$ be an infinite-to-one surjection, and define $C := (y_i)_{i<\omega}$ as follows: Let $y_0 \in \c{C_{\Gamma(0)}}$, and let $y_i \in \c{C_{\Gamma(i)}/i}$ be any vector such that $y_{i-1} < y_i$. Observe that for all $m$, we have that:
    \begin{align*}
        \c{y_i : \Gamma(i) = m} \subseteq \c{C} \cap \c{C_m} \subseteq \c{C} \cap V_m,
    \end{align*}
    so $C$ is compatible with $V_m$ for all $m$. Thus, $C \in \H^-(\A)$. It remains to show that $C$ weakly diagonalises $(B_n)_{n<\omega}$. Fix any $n$, and let $N$ be large enough so that $C_k/N \leq B_n$ for all $k < n$. For all $i \geq N$, we have that $y_i \in \c{C_{\Gamma(i)}/i} \subseteq \c{C_{\Gamma(i)}/N}$. If $\Gamma(i) < n$, then $y_i \in \c{C_{\Gamma(i)}/N} \subseteq \c{B_n}$ by our choice of $N$. If $\Gamma(i) \geq n$, then $C_{\Gamma(i)} \leq B_{\Gamma(i)}^{\Gamma(i)} \leq B_{\Gamma(i)} \leq B_n$, so $y_i \in \c{B_n}$. Therefore, $C/N \leq B_n$.
\end{proof}

\begin{proposition}
\label{prop:H^-(A).is.selective}
    Let $\A$ be an infinite almost disjoint family of subspaces. Then $\H^-(\A)$ is a selective semicoideal.
\end{proposition}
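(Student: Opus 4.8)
The plan is to reduce the statement to the (q)-property. Proposition \ref{prop:H^-(A).is.(p)} already establishes that $\H^-(\A)$ is a (p)-semicoideal, so by Proposition \ref{prop:selective.iff.(p,q)} it suffices to show that $\H^-(\A)$ satisfies the (q)-property. Accordingly I fix $A \in \H^-(\A)$, write $A = (a_k)_{k<\omega}$, and fix an increasing sequence of finite intervals $(I_m)_{m<\omega}$; the goal is to produce $B \in \H^-(\A)\restrictedto A$ such that for every $n$ there is an $m$ with $\c{r_n(B)} \subseteq \c{r_{\min(I_m)-1}(A)}$ and $B/n \leq A/\max(I_m)$.

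The combinatorial heart is a greedy construction of a block sequence $B = (b_n)_{n<\omega} \leq A$ whose consecutive vectors leave, in terms of $A$-coordinates, a gap wide enough to contain an entire interval $I_m$. Writing $q_{n-1}$ for the largest $A$-coordinate occurring in $b_{n-1}$ (with the recursion starting from the empty sequence), once $b_0,\dots,b_{n-1}$ are chosen I select the least $m(n)$ with $\min(I_{m(n)}) \geq q_{n-1}+2$ (possible since $\min(I_m) \to \infty$) and then take $b_n \in \c{A/\max(I_{m(n)})}$. This simultaneously forces $\c{r_n(B)} \subseteq \c{r_{\min(I_{m(n)})-1}(A)}$ and $B/n \leq A/\max(I_{m(n)})$, and it guarantees $b_{n-1} < b_n$, so $B$ is a genuine block subspace of $A$ meeting the (q)-conditions witnessed by $m(n)$.

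What remains is to arrange $B \in \H^-(\A)$, i.e. $\c{B} \in \I^+(\A)$, and here I split according to Lemma \ref{lem:I^+(A).dichotomy}. If some block subspace $D \leq A$ is almost disjoint from every element of $\A$, I run the construction drawing each $b_n$ from $\c{D}$ (which still contains vectors of arbitrarily large $A$-support, hence meets every tail $\c{A/N}$); then $B \leq D$ is almost disjoint from every element of $\A$, so $\c{B} \in \I^+(\A)$ by the other direction of Lemma \ref{lem:I^+(A).dichotomy}. Otherwise Lemma \ref{lem:infinite.compatibility.lemma.2} yields an infinite set $\{A_n : n<\omega\} \subseteq \A$ and a disjoint family $\{B_n : n<\omega\}$ with $B_n \leq A_n$ and $\c{B_n} \subseteq \c{A}$; fixing an infinite-to-one surjection $\Gamma : \omega \to \omega$ as in Lemma \ref{lem:infinite.compatibility.lemma.1}, I draw each $b_n$ from the (still infinite-dimensional) subspace $\c{B_{\Gamma(n)}} \cap \c{A/\max(I_{m(n)})}$. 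Since $\Gamma^{-1}(m)$ is infinite for every $m$, the block sequence $B$ contains infinitely many linearly independent vectors of each $\c{B_m} \subseteq \c{A_m}$, so $\c{B} \cap \c{A_m}$ is infinite-dimensional and therefore big; hence $\c{B} \in \I^{++}(\A) \subseteq \I^+(\A)$.

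The main obstacle is exactly the tension visible in the last step: the (q)-conditions compel me to place the vectors of $B$ at prescribed large $A$-coordinates, while membership in $\H^-(\A)$ compels me to keep producing vectors witnessing compatibility with infinitely many elements of $\A$. The construction reconciles the two because each admissible source of vectors—whether the single subspace $\c{D}$ or any fixed $\c{B_n}$—is an infinite-dimensional subspace of $\c{A}$ and thus meets every tail $\c{A/N}$; this gives enough room to honour the support constraints at each stage without ever leaving the source that secures $\c{B} \in \I^+(\A)$.
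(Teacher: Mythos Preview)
Your proof is correct and follows essentially the same approach as the paper's: reduce to the (q)-property via Propositions \ref{prop:H^-(A).is.(p)} and \ref{prop:selective.iff.(p,q)}, split into the two cases according to whether some $D \leq A$ is almost disjoint from all of $\A$, and in the second case use Lemma \ref{lem:infinite.compatibility.lemma.2} together with an infinite-to-one surjection $\Gamma$ to weave compatibility with infinitely many $A_m$'s into the sparse block sequence. The only cosmetic difference is that in the first case the paper passes to a sparse subsequence of the witness $D$ itself, whereas you draw arbitrary vectors from $\c{D}$ above the required threshold; both achieve the same end.
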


\begin{proof}
    By Proposition \ref{prop:H^-(A).is.(p)}, it remains to show that $\H^-(\A)$ is a (q)-semicoideal. Let $A \in \H^-(\A)$, and let $(I_m)_{m<\omega}$ be an increasing sequence of finite intervals. We split our proof into two cases.

    If there is some $B \leq A$ which is almost disjoint from every element of $\A$, then $C \in \H^-(\A)$ for all $C \leq B$, so any ``sufficiently sparse'' subsequence of $B$ would do the trick. More precisely, if $B = (y_n)_{n<\omega}$, then we define a subsequence of integers $(n_k)_{k<\omega}$ as follows: Let $n_0 := 0$, and if $n_k$ is defined, let $m$ be the least integer such that $n_k < \min(I_m)$. We then define $n_{k+1} := \max(I_m) + 1$, and $(y_{n_k})_{k<\omega} \in \H^-(\A)$ satisfies the conclusion of the (q)-property for $A$ and $(I_m)_{m<\omega}$.

    Suppose there is no $B \leq A$ which is almost disjoint from every element of $\A$. By Lemma \ref{lem:infinite.compatibility.lemma.2}, there exists an infinite $\{V_n : n < \omega\} \subseteq \A$ and an infinite disjoint family $\{B_n : n < \omega\}$ such that $\c{B_n} \subseteq \c{A} \cap V_n$ for all $n$. Let $\Gamma : \omega \to \omega$ be any infinite-to-one surjection. We write $A = (x_n)_{n<\omega}$, and define $B = (y_n)_{n<\omega}$ below $A$ as follows: If $(y_k)_{k<n}$ has been defined, we let $m$ be the least integer such that $\c{y_k : k < n} \subseteq \c{x_0,\dots,x_{\min(I_m)-1}}$. We let $y_n \in \c{B_{\Gamma(n)}/\max(I_m)}$ be any element. Then $(y_n)_{n \in \Gamma^{-1}(m)}$ witnesses that $B$ and $B_m$ are compatible for all $m < \omega$, and since $y_n \in \c{A}$ for all $n$, $B \leq A$ and $B \in \H^-(\A)$.
\end{proof}

\section{Constructions of mad families of subspaces}
\label{sec:full.mad.families}
\begin{definition}
\label{def:full.mad.family}
    A mad family $\A$ of subspaces is \emph{full} if $\H(\A)$ is full (see Definitions \ref{def:full.semicoideal} and \ref{def:ad.semi-ideals.and.semicoideals}).
\end{definition}

We dedicate this section to the proof of Theorem \ref{thm:main.full.mad.family}.

\subsection{Block-splitting families}
We introduce/recall the cardinal invariants which are relevant to our proof of Theorem \ref{thm:main.full.mad.family}. If $f,g \in \omega^\omega$, we write $f \leq^* g$ if $f(n) \leq g(n)$ for almost all $n < \omega$. We say that $\F \subseteq \omega^\omega$ is \emph{bounded} if there exists some $g \in \omega^\omega$ such that $f \leq^* g$ for all $f \in \F$, and \emph{unbounded} otherwise. The \emph{bounding number} $\frak{b}$ is defined as:
\begin{align*}
    \frak{b} := \min\{|\F| : \F \subseteq \omega^\omega \text{ is unbounded}\}.
\end{align*}
If $S \in [\omega]^\omega$ and $A \in [\omega]^\omega$, we say that $S$ \emph{splits} $A$ if $A \cap S$ and $A \setminus S$ are both infinite. A \emph{splitting family} is a family $\S \subseteq [\omega]^\omega$ such that for all $A \in [\omega]^\omega$, there exists some $S \in \S$ which splits $A$. The \emph{splitting number} is defined as:
\begin{align*}
    \frak{s} := \min\{|\S| : \S \subseteq [\omega]^\omega \text{ is a splitting family}\}.
\end{align*}
It turns out that both of these cardinals are closely related to \emph{block-splitting families}. If $\Po = \{P_n : n < \omega\}$ is an interval partition, we say that a set $S \in [\omega]^\omega$ \emph{splits} $\Po$ if $\{n < \omega : P_n \subseteq S\}$ and $\{n < \omega : P_n \cap S = \emptyset\}$ are both infinite. A \emph{block-splitting family} is a family $\S \subseteq [\omega]^\omega$ such that every interval partition is split by some $S \in \S$. The following lemma is due to Kamburelis-Weglorz.

\begin{lemma}[\cite{KW96}]
    $\max\{\frak{b},\frak{s}\} = \min\{|\S| : \S \text{ is a block-splitting family}\}$.
\end{lemma}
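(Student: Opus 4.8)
The plan is to prove the Kamburelis--Weglorz identity $\max\{\frak{b},\frak{s}\} = \min\{|\S| : \S \text{ is a block-splitting family}\}$ by establishing two inequalities. Write $\frak{bs}$ for the minimal cardinality of a block-splitting family. First I would show $\frak{bs} \geq \max\{\frak{b},\frak{s}\}$ by arguing that a block-splitting family $\S$ of size $<\frak{b}$ or $<\frak{s}$ cannot exist, and then I would show $\frak{bs} \leq \max\{\frak{b},\frak{s}\}$ by constructing a block-splitting family from a witnessing unbounded family together with a witnessing splitting family, of combined size $\max\{\frak{b},\frak{s}\}$.

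For the lower bound $\frak{bs} \geq \frak{s}$, observe that any block-splitting family is in particular a splitting family: given $A \in [\omega]^\omega$, apply the definition to the interval partition whose $n$-th block is the interval between the $n$-th and $(n+1)$-st elements of $A$, so that blocks contained in $S$ and blocks disjoint from $S$ translate into elements of $A$ inside and outside $S$. Hence any block-splitting family has size at least $\frak{s}$. For the lower bound $\frak{bs} \geq \frak{b}$, suppose $\S$ has size $<\frak{b}$. To each $S \in \S$ I would associate a function $f_S \in \omega^\omega$ recording, roughly, how quickly one must pass through interval partitions to guarantee $S$ block-splits them; since $|\S| < \frak{b}$, this family $\{f_S : S \in \S\}$ is bounded by some $g$, and I would use $g$ to build a single interval partition $\Po$ whose blocks grow fast enough that no $S \in \S$ can have both infinitely many blocks inside it and infinitely many blocks disjoint from it. Thus $\S$ fails to block-split $\Po$, contradicting that $\S$ is block-splitting.

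For the upper bound, fix an unbounded family $\F$ of size $\frak{b}$, which I would normalise to consist of strictly increasing functions, and a splitting family $\S_0$ of size $\frak{s}$. From each $f \in \F$ I would read off an interval partition (or a fast-growing sequence) and combine it with the splitting sets to manufacture block-splitting sets; the idea is that given any interval partition $\Po$, its ``speed'' is dominated relative to some $f \in \F$ (by unboundedness, $f$ is not eventually dominated, so $f$ outruns $\Po$ infinitely often), and along the resulting sparse subsequence of blocks a suitable member of $\S_0$ separates infinitely many blocks into the set and infinitely many out. Assembling these gives a block-splitting family of size $\le \frak{b} + \frak{s} = \max\{\frak{b},\frak{s}\}$.

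The main obstacle I anticipate is the precise bookkeeping in both directions that converts between interval partitions and functions in $\omega^\omega$, and in particular arranging the combination of the unbounded family with the splitting family so that a single pair $(f,S)$ genuinely block-splits a given partition rather than merely splitting the underlying set. The subtlety is that block-splitting is a strictly stronger requirement than splitting: one must control whole blocks, not just infinitely many points, so the reduction must ensure that the relevant member of $\S_0$ respects the block structure induced by $f$ on a cofinal set of coordinates. Getting the quantifiers right --- that for \emph{every} interval partition \emph{some} constructed set block-splits it --- is where the argument needs care, and I expect the function $f_S$ (respectively the passage from $f$ to a coarsening of the partition) to be the technical heart of the proof.
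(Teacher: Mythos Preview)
The paper does not prove this lemma at all: it is stated with a citation to Kamburelis--W\k{e}glorz \cite{KW96} and then used as a black box (only the consequence $\frak{bs} \leq \avec$ via $\frak{bs} \leq \non(\M)$ is invoked downstream). So there is no proof in the paper to compare your proposal against.

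That said, your sketch is broadly on the right track for how the Kamburelis--W\k{e}glorz argument goes. The inequality $\frak{bs} \geq \frak{s}$ is exactly as you say. For $\frak{bs} \geq \frak{b}$ your description of $f_S$ is too vague to be a proof, but the intended function is concrete: assuming each $S \in \S$ is infinite and coinfinite (harmless, since otherwise $S$ block-splits nothing), let $f_S(n)$ be least so that $[n,f_S(n))$ meets both $S$ and $S^c$; a dominating $g$ then yields an interval partition with endpoints $n_{k+1} = g(n_k)$ whose cofinitely many blocks meet both $S$ and $S^c$, so no $S$ block-splits it. For the upper bound your instinct is right that one combines an unbounded family $\F$ (taken to consist of strictly increasing functions) with a splitting family $\S_0$: to each $f$ associate the interval partition $I^f_n = [f(n),f(n+1))$ and to each pair $(f,S)$ the set $T_{f,S} = \bigcup_{n \in S} I^f_n$; given any interval partition $\Po$, unboundedness of $\F$ lets you choose $f$ so that infinitely many blocks of $\Po$ lie inside single blocks $I^f_n$, and then a suitable $S \in \S_0$ splits the set of those $n$'s. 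The ``subtlety'' you flag --- making sure one $S$ gives infinitely many $\Po$-blocks on each side rather than just infinitely many points --- is genuine, and in the actual argument one has to arrange the correspondence between $\Po$-blocks and indices $n$ carefully (e.g.\ by first passing to a subsequence of $\Po$-blocks each contained in a distinct $I^f_n$). Your proposal identifies the right ingredients but stops short of this bookkeeping; since the paper simply cites the result, that is entirely adequate here.
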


We abbreviate $\frak{bs} = \max\{\frak{b},\frak{s}\}$. By Corollary 2.11 of \cite{S19}, the identity $\non(\M) \leq \avec$ holds for any countable field $\mathbb{F}$. Since $\frak{b} \leq \non(\M)$ and $\frak{s} \leq \non(\M)$ (see, for instance, \cite{blasshandbook}), we may conclude that:

\begin{lemma}
\label{lem:bs.leq.avec}
    $\frak{bs} \leq \avec$.
\end{lemma}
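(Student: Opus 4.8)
The statement is an immediate consequence of the inequality $\non(\M) \leq \avec$ (Corollary 2.11 of \cite{S19}) together with two standard facts from the theory of cardinal characteristics of the continuum. The plan is therefore simply to bound $\frak{bs}$ above by $\non(\M)$ and then invoke the cited corollary, with no original combinatorial input required.

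First I would recall the two individual inequalities $\frak{b} \leq \non(\M)$ and $\frak{s} \leq \non(\M)$; both are part of the standard picture of the Cicho\'{n}/van Douwen diagram (see \cite{handbook}), so they may be quoted rather than reproved. Since $\frak{bs} = \max\{\frak{b},\frak{s}\}$ is by definition the larger of two cardinals each of which lies at or below $\non(\M)$, this already yields $\frak{bs} \leq \non(\M)$. The only nontrivial ingredient is the already-established bound $\non(\M) \leq \avec$, which is where all the genuine content of the lemma resides.

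Chaining these gives $\frak{bs} \leq \non(\M) \leq \avec$, which is the desired conclusion. There is no real obstacle: the argument is a two-link chain of inequalities, and the lemma merely repackages the classical bounds on $\frak{b}$ and $\frak{s}$ in the light of Smythe's estimate $\non(\M) \leq \avec$. The single point worth stating explicitly is that a maximum of two quantities each bounded by $\non(\M)$ is again bounded by $\non(\M)$ — trivial, but it is precisely the step that combines the two separate classical inequalities into a bound on $\frak{bs}$.
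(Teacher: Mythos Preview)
Your proposal is correct and matches the paper's approach exactly: the paper also derives the lemma by combining $\frak{b} \leq \non(\M)$ and $\frak{s} \leq \non(\M)$ (cited from \cite{handbook}) with Smythe's bound $\non(\M) \leq \avec$ (Corollary~2.11 of \cite{S19}).
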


\subsection{Proof of Theorem \ref{thm:main.full.mad.family}}
\begin{lemma}
\label{lem:splitting.lemma}
    There exists a collection $\{Z_\alpha : \alpha < \frak{bs}\}$ of block subspaces of $E$ such that for all almost disjoint families $\A$ of subspaces and $B \in \H^-(\A)$, the following properties hold:
    \begin{enumerate}[label=(\roman*)]
        \item For all $\alpha < \frak{bs}$, $\c{B} \cap Z_\alpha \in \I^+(\A)$ or $\c{B} \cap Z_\alpha^c \in \I^+(\A)$.
    
        \item If $\alpha < \frak{bs}$ and $\c{B} \cap Z_\alpha \in \I^+(\A)$, then there exists some $C_\alpha^0 \in \H^-(\A)$ such that $\c{C_\alpha^0} \subseteq \c{B} \cap Z_\alpha$.

        \item If $\alpha < \frak{bs}$ and $\c{B} \cap Z_\alpha^c \in \I^+(\A)$, then there exists some $C_\alpha^1 \in \H^-(\A)$ such that $\c{C_\alpha^1} \subseteq \c{B} \cap Z_\alpha^c$.

        \item There exists some $\alpha < \frak{bs}$ such that $\c{B} \cap Z_\alpha \in \I^+(\A)$ and $\c{B} \cap Z_\alpha^c \in \I^+(\A)$.
    \end{enumerate}
\end{lemma}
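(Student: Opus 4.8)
The plan is to fix, once and for all, a block-splitting family $\{S_\alpha : \alpha < \frak{bs}\} \subseteq [\omega]^\omega$ of size $\frak{bs}$ (which exists by the Kamburelis--Weglorz characterisation), and to set $Z_\alpha := \c{(e_n)_{n \in S_\alpha}} = \{v \in E : \supp(v) \subseteq S_\alpha\}$. Thus each $Z_\alpha$ is itself the span of a block subspace $W_\alpha := (e_n)_{n \in S_\alpha} \in E^{[\infty]}$, which is the feature that makes (i) and (ii) fall out cleanly. The one combinatorial observation I will use repeatedly is what I will call \emph{block closure}: if $C = (c_n)_{n<\omega}$ is a block sequence with $\supp(c_n) \not\subseteq S_\alpha$ for every $n$, then because the supports of the $c_n$ are pairwise disjoint, the support of any nonzero $v \in \c{C}$ is the union of the supports of those $c_n$ occurring in $v$ with nonzero coefficient, and this union is not contained in $S_\alpha$; hence $\c{C} \setminus \{0\} \subseteq Z_\alpha^c$. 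Dually, if every $\supp(c_n) \subseteq S_\alpha$ then $\c{C} \subseteq Z_\alpha$.

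For (i), suppose toward a contradiction that both $\c{B} \cap Z_\alpha \in \I(\A)$ and $\c{B} \cap Z_\alpha^c \in \I(\A)$. Choosing a single finite $U := \c{A_0} \cup \cdots \cup \c{A_{p-1}}$ witnessing both, I get that $(\c{B} \cap Z_\alpha) \setminus U$ and $(\c{B} \cap Z_\alpha^c) \setminus U$ are both small. If $\c{B} \setminus U$ were big, say $\c{D} \subseteq \c{B} \setminus U$ for some $D \in E^{[\infty]}$, then $\c{D} \cap \c{W_\alpha} = \c{D} \cap Z_\alpha$ and $\c{D} \setminus \c{W_\alpha} = \c{D} \cap Z_\alpha^c$ would both be small, contradicting the consequence of Lemma \ref{lem:ad.is.very.small} applied to $D$ and $W_\alpha$. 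Hence $\c{B} \setminus U$ is small and $\c{B} \in \I(\A)$, contradicting $B \in \H^-(\A)$. For (ii), if $\c{B} \cap Z_\alpha \in \I^+(\A)$ then it is big, hence infinite-dimensional; being the intersection of the two subspaces $\c{B}$ and $Z_\alpha = \c{W_\alpha}$, it is itself a block subspace $\c{C_\alpha^0}$ by Lemma \ref{lem:block.subspaces.intersection.lemma}, and $\c{C_\alpha^0} = \c{B} \cap Z_\alpha \in \I^+(\A)$ gives $C_\alpha^0 \in \H^-(\A)$.

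For (iii) I cannot simply intersect with a subspace, since $Z_\alpha^c$ is not one; instead I build a block subspace whose generators all lie in $Y := \c{B} \cap Z_\alpha^c$, so that block closure forces $\c{C_\alpha^1} \setminus \{0\} \subseteq Z_\alpha^c$ automatically. Given $Y \in \I^+(\A)$, I split into the two cases of Lemma \ref{lem:I^+(A).dichotomy}. If some block subspace with generators in $Y$ is almost disjoint from every element of $\A$, it lies in $\I^+(\A)$ and we are done. Otherwise I rerun the construction of Lemma \ref{lem:infinite.compatibility.lemma.2}, always choosing the auxiliary vectors from $Y \setminus \bigcup_\beta \c{B_\beta}$, which stays big because $Y \in \I^+(\A)$; block closure guarantees that every block subspace produced along the way keeps all of its generators in $Y$, so the diagonal subspace $C_\alpha^1$ obtained from Lemma \ref{lem:infinite.compatibility.lemma.1} satisfies $\c{C_\alpha^1} \subseteq \c{B} \cap Z_\alpha^c$ while meeting $\c{A_n}$ in a big set for infinitely many $n$, whence $C_\alpha^1 \in \H^-(\A)$.

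Property (iv) is the crux. Starting from $B \in \H^-(\A)$ I apply Lemma \ref{lem:I^+(A).dichotomy} to $\c{B}$. If there is a $D \leq B$ almost disjoint from every element of $\A$, I take $S_\alpha$ block-splitting the interval partition induced by the supports of the basis of $D$; the two resulting sub-block-sequences land in $Z_\alpha$ and in $Z_\alpha^c$ respectively (by block closure), remain almost disjoint from $\A$, and so both sides lie in $\I^+(\A)$. The genuinely delicate case is $\c{B} \in \I^{++}(\A)$, where a naive block-split of $B$ can push all compatibilities onto one side. I get around this by first extracting, via Lemma \ref{lem:infinite.compatibility.lemma.2}, disjoint $B_n \leq B$ with $\c{B_n} \subseteq \c{B} \cap \c{A_n}$ for distinct $A_n \in \A$, and then assembling a single block sequence $D$ out of super-blocks $G_0 < G_1 < \cdots$, where $G_j$ contains one vector of each of $\c{B_0}, \dots, \c{B_j}$. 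Letting $S_\alpha$ block-split the interval partition whose $j$-th interval is the support of $G_j$, every $G_j$ lands entirely on one side; since each side contains infinitely many super-blocks and $G_j$ contributes a vector of $\c{A_i}$ for every $i \leq j$, both $\c{B} \cap Z_\alpha$ and $\c{B} \cap Z_\alpha^c$ meet $\c{A_i}$ in a big set for every $i$, so both lie in $\I^{++}(\A) \subseteq \I^+(\A)$. The main obstacle is exactly this coordination problem in the $\I^{++}$ case, and the super-block device --- designed so that no block-split of the fixed family can concentrate the compatibilities on one side --- is what resolves it.
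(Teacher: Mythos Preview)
Your proposal is correct and follows essentially the same approach as the paper: the same choice $Z_\alpha = \c{(e_n)_{n \in S_\alpha}}$ for a block-splitting family $\{S_\alpha\}$, the same use of Lemma~\ref{lem:ad.is.very.small} for (i), Lemma~\ref{lem:block.subspaces.intersection.lemma} for (ii), the block-closure observation together with the construction from Lemmas~\ref{lem:infinite.compatibility.lemma.1}--\ref{lem:infinite.compatibility.lemma.2} for (iii), and the same case split for (iv). Your ``super-block'' packaging in the $\I^{++}$ case of (iv) is just a reorganisation of the paper's construction, where the paper first lays down a single sequence $(x_n)$ via an infinite-to-one $\Gamma$ and then chooses the interval partition so that each block contains a representative of every relevant $B_i$; the content is identical.
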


\begin{proof}
    Let $\{S_\alpha : \alpha < \frak{bs}\} \subseteq [\omega]^\omega$ be a block-splitting family, and let $Z_\alpha := \c{(e_n)_{n \in S_\alpha}}$. We remark that $\c{(e_n)_{n \notin S_\alpha}} \subseteq Z_\alpha^c$. We shall show that $\{Z_\alpha : \alpha < \frak{bs}\}$ satisfies the four properties above. 

    \underline{Property (i):} Suppose that $\c{B} \cap Z_\alpha \in \I(\A)$ and $\c{B} \cap Z_\alpha^c \in \I(\A)$. Let $\{V_i : i < n\} \subseteq \A$ such that $(\c{B} \cap Z_\alpha) \setminus \bigcup_{i<n} V_i$ and $(\c{B} \cap Z_\alpha^c) \setminus \bigcup_{i<n} V_i$ are small. Since $B \in \H^-(\A)$, there exists some $C \leq B$ such that:
    \begin{align*}
        \c{C} \subseteq \c{B} \setminus \bigcup_{i<n} V_i.
    \end{align*}
    However, since:
    \begin{align*}
        \c{C} \cap Z_\alpha \subseteq (\c{B} \cap Z_\alpha) \setminus \bigcup_{i<n} V_i, \\        
        \c{C} \cap Z_\alpha^c \subseteq (\c{B} \cap Z_\alpha^c) \setminus \bigcup_{i<n} V_i,
    \end{align*}
    we have that $\c{C} \cap Z_\alpha$ and $\c{C} \cap Z_\alpha^c$ are small. This contradicts Lemma \ref{lem:ad.is.very.small}.

    \underline{Property (ii):} This follows from Lemma \ref{lem:block.subspaces.intersection.lemma}, where we may let $C_\alpha^0 \in E^{[\infty]}$ be such that $\c{C_\alpha^0} = \c{B} \cap \c{(e_n)_{n \in S_\alpha}}$.
    
    \underline{Property (iii):} Suppose that $\c{B} \cap Z_\alpha^c \in \I^+(\A)$. If there exists some $C \in E^{[\infty]}$ such that $\c{C} \subseteq \c{B} \cap Z_\alpha^c$ which is almost disjoint from every element of $\A$, then $C \in \H^-(\A)$ and we are done, so assume otherwise. In a way similar to the one presented in the proof of Lemma \ref{lem:infinite.compatibility.lemma.2}, we define $V_n,B_n$ for $n < \omega$ such that $\bigcup_{n<\omega} \c{B_n} \subseteq \c{B} \cap Z_\alpha^c$, $\{B_n : n < \omega\}$ is a disjoint family of block subspaces, and $V_n \in \A$ is compatible with $B_n$ for all $n$. Let $C_n$ be such that $\c{C_n} \subseteq V_n \cap \c{B_n}$. 
    
    For each $n$, we write $C_n := (x_{n,m})_{m<\omega}$. Since $\c{C_n} \subseteq Z_\alpha^c$, this means that for all $m$, $\lambda_{l_{n,m}}(x_{n,m}) \neq 0$ for some $l_{m,n} \notin S_\alpha$. Now let $\Gamma : \omega \to \omega \times \omega$ be any function such that:
    \begin{enumerate}
        \item for all $n$, there are infinitely many $k$ such that $\Gamma(k) = (n,m)$ for some $m$, and;

        \item if $k < k'$, $\Gamma(k) = (n,m)$ and $\Gamma(k') = (n',m')$, then $x_{n,m} < x_{n',m'}$.
    \end{enumerate}
    This implies that $C := (x_{\Gamma(k)})_{k<\omega}$ is a well-defined infinite block sequences, where if $\Gamma(k) = (n,m)$ then $x_{\Gamma(k)} := x_{n,m}$. Then $C$ is compatible with every $C_n$, so $C \in \H^-(\A)$. Furthermore, we have that:
    \begin{align*}
        \c{C} \subseteq \bigc{\bigcup_{n<\omega} \c{C_n}} \subseteq \c{B},
    \end{align*}
    so $C \leq B$. Finally, if $x \in \c{C}$, then $\lambda_l(x) \neq 0$ for some $l \notin S_\alpha$, so $x \notin Z_\alpha$. In other words, $\c{C} \subseteq Z_\alpha^c$. Therefore, $C \in \H^-(\A)$ and $\c{C} \subseteq \c{B} \cap Z_\alpha^c$, as desired.

    \underline{Property (iv):} We first suppose that $B$ is almost disjoint from every element of $\A$. If $B = (x_n)_{n<\omega}$, we define, for $n < \omega$:
    \begin{align*}
        P_0 &:= \{0,\dots,\max(\supp(x_0))\}, \\
        P_{n+1} &:= \{\max(\supp(x_n))+1,\dots,\max(\supp(x_{n+1}))\}.
    \end{align*}
    We have that $\Po := \{P_n : n < \omega\}$ is an interval partition such that $\supp(x_n) \subseteq P_n$ for all $n$. Since $\{S_\alpha : \alpha < \frak{bs}\}$ is a block-splitting family, there exists some $\alpha < \frak{bs}$ that splits $\Po$. Let $T_0 := \{n < \omega : P_n \subseteq S_\alpha\}$ and $T_1 := \{n < \omega : P_n \cap S_\alpha = \emptyset\}$. Define $B_0 := (x_n)_{n \in T_0}$ and $B_1 := (x_n)_{n \in T_1}$. Since $B_0,B_1 \leq B$, and $B$ is almost disjoint from every element of $\A$, so are $B_0,B_1$ and so $B_0,B_1 \in \H^-(\A)$. If $n \in T_0$, then $\supp(x_n) \subseteq P_n \subseteq S_\alpha$, $x_n \in (e_n)_{n \in S_\alpha}$. Therefore, $\c{B_0} \subseteq \c{B} \cap Z_\alpha$. By a similar reasoning, we may also conclude that $\c{B_1} \subseteq \c{B} \cap Z_\alpha^c$.

    Now suppose that there is no $C \leq B$ which is almost disjoint from every element of $\A$ (if such a $C$ exists, we may apply the argument in the previous paragraph to $C$, and use the fact that $\I^+(\A)$ is $\subseteq$-upward closed). By Lemma \ref{lem:infinite.compatibility.lemma.2}, let $\{V_n : n < \omega\} \subseteq \A$ be an infinite set and let $\{B_n : n < \omega\}$ be an infinite disjoint family such that $\c{B_n} \subseteq \c{B} \cap V_n$ for all $n$. Let $\Gamma : \omega \to \omega$ be an infinite-to-one surjection. We define an infinite block sequence $(x_n)_{n<\omega}$ inductively as follows: If $(x_k)_{k<n}$ has been defined, let $x_n \in \c{B_{\Gamma(n)}}$ be any vector $x_{n-1} < x_n$. Having defined $(x_n)_{n<\omega}$, we now define an interval partition $\Po := \{P_m : m < \omega\}$ as follows: Let $P_0 := \{0,\dots,\max(\supp(x_{n_0}))\}$, where $n_0$ is the least integer such that $\Gamma(n_0) = 0$. If $P_m$ has been defined with $\max(P_m) = n_m$, Let $\min(P_{m+1}) = n_m + 1$, and let $\max(P_{m+1})$ be large enough so that for all $i \leq m+1$, there exists some $n$ such that $\Gamma(n) = i$ and $\supp(x_n) \subseteq P_{m+1}$. This is possible as $\Gamma$ is infinite-to-one. 

    Now let $\alpha < \frak{bs}$ be such that $S_\alpha$ splits $\Po$. For each $i$, let $T_i^0 := \{n < \omega : \Gamma(n) = i \wedge \supp(x_n) \subseteq S_\alpha\}$ and $T_i^1 := \{n < \omega : \Gamma(n) = i \wedge \supp(x_n) \cap S_\alpha = \emptyset\}$. By our construction of $\Po$, we have that $T_i^0$ and $T_i^1$ are infinite for all $i$. By a similar reasoning to the first case, we may conclude that $\c{B} \cap Z_\alpha$ and $\c{B} \cap Z_\alpha^c$ are compatible with $V_i$ for all $i$, so they are both in $\I^+(\A)$.
\end{proof}

We are now ready to prove the theorem.

\begin{proof}[Proof of Theorem \ref{thm:main.full.mad.family}]
    Let $\{Z_\alpha : \alpha < \frak{bs}\}$ be a family of subsets of vectors satisfying the conclusions of Lemma \ref{lem:splitting.lemma}. We write $Z_\alpha^0 := Z_\alpha$ and $Z_\alpha^1 := Z_\alpha^c$ for all $\alpha$. With this, for a fixed almost disjoint family $\A \subseteq E^{[\infty]}$ and $B \in \H^-(\A)$, there exist $\alpha < \frak{bs}$ and $\tau_B^\A \in 2^\alpha$ such that:
    \begin{enumerate}
        \item If $\beta < \alpha$, then $\c{B} \cap Z_\beta^{1-\tau_B^\A(\beta)} \in \I(\A)$ (so $\c{B} \cap Z_\beta^{\tau_B^\A(\beta)} \in \I^+(\A)$ by property (i) of Lemma \ref{lem:splitting.lemma});

        \item $\c{B} \cap Z_\alpha^0 \in \I^+(\A)$ and $\c{B} \cap Z_\alpha^1 \in \I^+(\A)$ (such an $\alpha$ exists by property (iv) of Lemma \ref{lem:splitting.lemma}). 
    \end{enumerate}
    Observe that for a fixed almost disjoint family $\A$, $\tau_B^\A \in 2^{<\frak{bs}}$ is unique, and if $B' \leq B$ and $B' \in \H^-(\A)$, then $\tau_B^\A \subseteq \tau_{B'}^\A$. Let $\Lim(\frak{c})$ be the set of limit ordinals below $\frak{c}$. Let $\{Y_\alpha : \alpha \in \Lim(\frak{c})\}$ be an enumeration of $\Po(E)$, and let $\{B_\alpha : \alpha \in \Lim(\frak{c})\}$ be an enumeration of $E^{[\infty]}$ in a way that for any $B \in E^{[\infty]}$ and $Y \subseteq E$, there are cofinally many $\alpha \in \Lim(\frak{c})$ such that $(Y_\alpha,B_\alpha) = (Y,B)$. We shall define our mad family $\A = \{A_\alpha : \alpha < \frak{c}\}$ recursively, where $\A_\alpha := \{A_\beta : \beta < \alpha\}$, along with a collection $\{A_\alpha' : \alpha \in \Lim(\frak{c})\}$ of block sequences, and an injective sequence $\{\sigma_\alpha : \alpha \in \Lim(\frak{c})\}$ of elements of $2^{<\frak{bs}}$ such that the following holds:
    \begin{enumerate}
        \item If $\beta < \alpha$ are limit ordinals, then $\sigma_\alpha \not\sqsubseteq \sigma_\beta$.

        \item $A_\alpha'$ is almost disjoint with $A_\beta$ for all $\beta < \alpha$.

        \item If $\xi < \dom(\sigma_\alpha)$, then $\c{A_\alpha'} \setminus Z_\xi^{\sigma_\alpha(\xi)}$ is small (hence, by Lemma \ref{lem:ad.is.very.small}, is very small).

        \item $\{A_{\alpha+n} : n < \omega\}$ is a disjoint family below $A_\alpha'$.
    \end{enumerate}
    For our base case, we let $\A_\omega$ be any countable almost disjoint family. Assume that $\alpha \geq \omega$ is a limit ordinal, and that $A_{\beta+n}$, $A_\beta'$ and $\sigma_\beta$ have been defined for limit $\beta < \alpha$ and $n < \omega$, and satisfy the requirements stated above so far. We define the element $B$ depending on the following scenarios:
    \begin{enumerate}[label=(\alph*)]
        \item If $B_\alpha \notin \H(\A_\alpha)$, then let $B := B_\alpha$ if $B_\alpha \in \H^-(\A_\alpha)$. Otherwise, let $B$ be any element of $\H^-(\A_\alpha)$.

        \item Suppose that $B_\alpha \in \H(\A_\alpha)$ (so $B_\alpha \in \H^-(\A_\alpha)$ by Lemma \ref{lem:plusplus.in.plus}), and:
        \begin{align*}
            \cal{C}_\alpha := \{A \in \A_\alpha : B_\alpha \text{ and } A \text{ are compatible}\}.
        \end{align*}
        Suppose further that there exists some $B_\alpha' \leq B_\alpha$ that is almost disjoint from every element of $\cal{C}_\alpha$, and $\c{B_\alpha'} \subseteq Y_\alpha$. Then we let $B := B_\alpha'$.

        \item Suppose that $B_\alpha \in \H(\A_\alpha)$ (so $B_\alpha \in \H^-(\A_\alpha)$), and for all $B \leq B_\alpha$ which is almost disjoint from every element of $\cal{C}_\alpha$, there is no $B' \leq B$ such that $\c{B'} \not\subseteq Y_\alpha$. Then we let $B := B_\alpha$.
    \end{enumerate}
    We inductively define $\{C_s : s \in 2^{<\omega}\} \subseteq \H^-(\A_\alpha)$, $\{\eta_s : s \in 2^{<\omega}\} \subseteq 2^{<\frak{bs}}$ and ordinals $\{\alpha_s : s \in 2^{<\omega}\}$ as follows:
    \begin{enumerate}
        \item $C_\emptyset := B$.

        \item $\eta_s := \tau_{C_s}^{\A_\alpha}$ and $\alpha_s := \dom(\eta_s)$.

        \item $C_{s^\frown0},C_{s^\frown1} \in \H^-(\A_\alpha)$ are such that $\c{C_{s^\frown0}} \subseteq \c{C_s} \cap Z_{\alpha_s}^0$ and $\c{C_{s^\frown1}} \subseteq \c{C_s} \cap Z_{\alpha_s}^1$ (which both exist by properties (ii) and (iii) of Lemma \ref{lem:splitting.lemma}).
    \end{enumerate}
    We then define $\eta_f := \bigcup_{n<\omega} \eta_{f\restrictedto n}$ for any $f \in 2^\omega$, and note that $\eta_f \in 2^{<\frak{bs}}$ as $\frak{bs}$ has uncountable cofinality (as $\frak{b}$ is regular and $\frak{s}$ has uncountable cofinality, see Corollary 21 of \cite{G21}). We note the following observations:
    \begin{enumerate}
        \item If $s \sqsubseteq t$, then $C_s \leq C_t$ and $\eta_s \sqsubseteq \eta_t$.

        \item If $s$ and $t$ are incompatible, then so are $\eta_s$ and $\eta_t$.

        \item Consequently, if $f \neq g$, then $\eta_f$ and $\eta_g$ are incompatible.
    \end{enumerate}
    Since $\alpha < \frak{c}$, we may let $f \in 2^\omega$ be such that $\eta_f \not\sqsubseteq \sigma_\beta$ for all limit ordinals $\beta < \alpha$. We now have a $\leq$-decreasing sequence $(C_{f\restrictedto n})_{n<\omega}$ in $\H^-(\A_\alpha)$, so by Proposition \ref{prop:H^-(A).is.(p)} there exists some $D \in \H^-(\A_\alpha)$ such that $D \leq^* C_{f\restrictedto n}$ for all $n$.

    Let $\sigma_{\alpha} := \eta_f$. Observe that for all $\xi < \dom(\eta_f)$, if $n < \omega$ is such that $\xi \in \dom(\eta_{f\restrictedto n})$, and if $N$ is such that $D/N \leq C_{f\restrictedto n}$, then:
    \begin{align*}
        \c{D/N} \cap Z_\xi^{1-\eta_f(\xi)} = \c{D/N} \cap Z_\xi^{1-\eta_{f\restrictedto n}(\xi)} \subseteq \c{C_{f\restrictedto n}} \cap Z_\xi^{1-\eta_{f\restrictedto n}(\xi)} \in \I(\A_\alpha),
    \end{align*}
    so that for all $\xi < \dom(\sigma_{\alpha})$, $\c{D} \cap Z_\xi^{1-\sigma_\alpha(\xi)} \in \I(\A)$ (as $\c{D} \setminus \c{D/N}$ is very small, by Lemma \ref{lem:ad.is.very.small}). Therefore, for each $\xi < \dom(\sigma_\alpha)$ we may find a finite subset $\F_\xi \subseteq \A_\alpha$ such that $\bb{\c{D} \cap Z_\xi^{1-\sigma_\alpha(\xi)}} \setminus \bigcup_{A \in \F_\xi} \c{A}$ is small. Now let:
    \begin{align*}
        \B_0 &:= \{A_{\beta+n} : \sigma_\beta \sqsubseteq \sigma_\alpha \text{ and } n < \omega\}, \\
        \B_1 &:= \bigcup_{\xi < \dom(\sigma_\alpha)} \F_\xi, \\
        \B &:= \B_0 \cup \B_1.
    \end{align*}
    Note that $\B \subseteq \A_\alpha$, so it is an almost disjoint family.

    \begin{claim}
    \label{claim:B.smaller.than.bs}
        $|\B| < \frak{bs}$.
    \end{claim}

    \begin{midproof}
        It suffices to show that $|\B_0| < \frak{bs}$ and $|\B_1| < \frak{bs}$. Since $\dom(\sigma_\alpha) < \frak{bs}$, there are $<\frak{bs}$ many initial segments of $\sigma_\alpha$. This implies that there are $<\frak{bs}$ many $\beta < \alpha$ such that $\sigma_\beta \sqsubseteq \sigma_\alpha$, as if $\beta,\gamma < \alpha$ and $\beta \neq \gamma$, then $\sigma_\beta \neq \sigma_\gamma$ by the induction hypothesis. Therefore, $|\B_0| < \frak{bs}$. $\B_1$ is a union of $\dom(\sigma_\alpha) < \frak{bs}$ many finite sets, so $|\B_1| < \frak{bs}$ as well. 
    \end{midproof}

    By Lemma \ref{lem:block.subspaces.intersection.lemma},  the set:
    \begin{align*}
        \{\c{B'} \cap \c{D} : B' \in \B \text{ and } \c{B'} \cap \c{D} \text{ is infinite-dimensional}\}
    \end{align*}
    is an almost disjoint family of block subspaces of $\c{D}$. Note that $\c{D}$ is not in the set, as otherwise this implies that $\c{D} \subseteq A_\beta$ for some $\beta < \alpha$, contradicting that $\D \in \H^-(\A_\alpha)$. By Lemma \ref{lem:bs.leq.avec}, $|\B| < \frak{bs} \leq \avec$, so there exists some $A_\alpha' \leq D$ such that $A_\alpha'$ is almost disjoint from every element of $\B$. 

    \begin{claim}
        \hfill
        \begin{enumerate}
            \item For all $\xi \in \dom(\sigma_\alpha)$, $\c{A_\alpha'} \setminus Z_\xi^{\sigma_\alpha(\xi)}$ is small.

            \item For all $\beta < \alpha$, $A_\alpha'$ and $A_\beta$ are almost disjoint.
        \end{enumerate}
    \end{claim}

    \begin{midproof}
        \hfill
        \begin{enumerate}
            \item Note that $\c{A_\alpha'} \setminus Z_\xi^{\sigma_\alpha(\xi)} = \c{A_\alpha'} \cap Z_\xi^{1-\sigma_\alpha(\xi)}$. We have that $\c{A_\alpha'} \cap Z_\xi^{1 - \sigma_\alpha(\xi)} \setminus \bigcup_{A \in \F_\xi} \c{A}$ is small by above, and since $A_\alpha'$ is almost disjoint from every $A \in \F_\xi$, we have that:
            \begin{align*}
                \c{A_\alpha'} \cap Z_\xi^{1 - \sigma_\alpha(\xi)} \cap \bigcup_{A \in \F_\xi} \c{A} &\subseteq \bigcup_{A \in \F_\xi} \c{A_\alpha'} \cap \c{A}.
            \end{align*}
            By Lemma \ref{lem:ad.is.very.small}, $\c{A_\alpha'} \cap \c{A}$ is very small for each $A \in \F_\xi$, so $\c{A_\alpha'} \cap Z_\xi^{1-\sigma_\alpha(\xi)}$ is small. 

            \item Let $\beta + n < \alpha$, where $\beta$ is a limit ordinal. If $A_{\beta+n} \in \B_0$, then $A_\alpha$ is almost disjoint from $A_{\beta+n}$ by construction, so we are done. Now assume that $A_{\beta+n} \notin \B_0$. Let $\xi$ be the least ordinal such that $\sigma_\beta(\xi) \neq \sigma_\alpha(\xi)$. By induction hypothesis, we have that $W := \c{A_\beta'} \setminus Z_\xi^{\sigma_\beta(\xi)} = \c{A_\beta'} \setminus Z_\xi^{1-\sigma_\alpha(\xi)}$ is small (and hence very small). Since $A_{\beta+n} \leq A_\beta'$, we have that for some small set $W'$:
            \begin{align*}
                \c{A_\alpha'} \cap \c{A_{\beta+n}} &\subseteq W \cup (\c{A_\alpha'} \cap Z_\xi^{1-\sigma_\alpha(\xi)}) \\
                &\subseteq W \cup W' \cup \bigcup_{A \in F_\xi} \c{A_\alpha'} \cap \c{A},
            \end{align*}
            which is also small as all the components in the union are very small, as desired.
        \end{enumerate}
    \end{midproof}
    We complete the induction by letting $\{A_{\alpha+n} : n < \omega\}$ be any infinite disjoint family below $A_\alpha'$. We note that $\A$ is a mad family --- otherwise, if $B$ is almost disjoint from every element of $\A$, then $B \in \H^-(\A_\alpha)$ for all $\alpha < \frak{c}$. Let $\alpha$ be any limit ordinal such that $B_\alpha = B$, and we have that $A_\alpha \leq B$, a contradiction. It remains to show that $\A$ is full.
    
    To see that $\A$ is full, let $Y \subseteq E$, and suppose that $Y$ is $\H(\A)$-dense below some $B \in \H(\A)$. Since $\H(\A) = \bigcup_{\alpha<\frak{c}} \H(\A_\alpha)$, we may let $\alpha$ be large enough so that $(Y_\alpha,B_\alpha) = (Y,B)$ and $B \in \H(\A_\alpha)$. This means that in the induction step, we have either case (b) or (c). Observe that (c) is not possible --- since $A_\alpha' \leq B_\alpha$ is almost disjoint from every element of $\cal{C}_\alpha$ as $\cal{C}_\alpha \subseteq \A_\alpha$, and $A_\alpha' \in \H(\A)$ as $A_\alpha'$ is compatible with $A_{\alpha+n}$ for all $n$. However, there is no $B' \leq A_\alpha'$ for which $\c{B'} \subseteq Y_\alpha$, contradicting that $Y$ is $\H(\A)$-dense below $B$. This means that case (b) must have occurred in this induction step. Since $B_{\alpha'}$ is compatible with $A_{\alpha+n}$ for all $n$, $B_{\alpha'} \in \H(\A)$ and $\c{B_\alpha'} \subseteq Y_\alpha$, as desired.
\end{proof}

\subsection{Completely separable mad family of subspaces}
We dedicate this section to defining a notion of completely separable almost disjoint families of subspaces and proving Theorem \ref{thm:cs.mad.family}. 

\begin{definition}
\label{def:cs.mad.family}
    An almost disjoint family $\A$ of subspaces is \emph{completely separable} if for any subspace $U \subseteq E$, if $U \in \I^+(\A)$ then there exists some $V \in \A$ such that $V \subseteq U$.
\end{definition}

\begin{lemma}
    A completely separable almost disjoint family of subspaces is maximal.
\end{lemma}

\begin{proof}
    Suppose that $\A$ is a completely separable almost disjoint family of subspaces that is not maximal. Let $U \subseteq E$ be a subspace almost disjoint from every $V \in \A$. For any $V_0,\dots,V_{n-1} \in \A$, we have that:
    \begin{align*}
        U \setminus \bigcup_{i<n} V_i = U \setminus \bigcup_{i<n} U \cap V_i.
    \end{align*}
    By Lemma \ref{lem:ad.is.very.small}, $U \cap V_i$ is very small for all $i < n$, so $\bigcup_{i<n} U \cap V_i$ is very small. Since $U$ is big, we have that $U \setminus \bigcup_{i<n} V_i$ is big. Since $V_0,\dots,V_{n-1}$ are arbitrary, $U \in \I^+(\A)$. By the complete separability of $\A$, this implies that $V \subseteq U$ for some $V \in \A$, contradicting that $U$ and $V$ are almost disjoint.
\end{proof}

One may consider the alternative definition where we remove the subspace requirement of $U$ in Definition \ref{def:cs.mad.family}. However, it turns out that no such almost disjoint families of subspaces exist.

\begin{proposition}
    Let $\A$ be an infinite almost disjoint family of subspaces. There exists some $Y \in \I^{++}(\A)$ such that for all $A \in \A$, $\c{A} \not\subseteq Y$.
\end{proposition}

\begin{proof}
    Let $\{V_n : n < \omega\} \subseteq \A$. For each $n$, let $B_n,C_n$ be block subspaces of $V_n$ such that $\c{B_n} \cap \c{C_n} = \emptyset$. Let $\Gamma : \omega \to \omega$ be an infinite-to-one surjection. We define a block sequence $(x_m)_{m<\omega}$ as follows: Let $x_0 \in \c{C_{\Gamma(0)}}$ be any vector. If $x_m$ has been defined, let $x_{m+1} \in \c{C_{\Gamma(m+1)}}$ be any vector such that $x_m < x_{m+1}$. We let $D_n := (x_m)_{m \in \Gamma^{-1}(n)}$, which is an infinite block sequence as $\Gamma$ is infinite-to-one. We write $D_n = (y_k^n)_{k<\omega}$ Note that the block subspaces $D_n$ have the properties: 
    \begin{enumerate}
        \item For all $z \in \c{D_n}$, $\supp(z) \subseteq \bigcup_{k<\omega} \supp(y_k^n)$.
        
        \item If $m \neq n$, then $\bigcup_{k<\omega} \supp(y_k^n) \cap \bigcup_{j<\omega} \supp(y_j^m) = \emptyset$.
    \end{enumerate}
    We define:
    \begin{align*}
        Y := \bigcup_{n<\omega} \c{D_n}.
    \end{align*}
    Clearly $Y \in \I^{++}(\A)$, as $\c{D_n} \subseteq V_n \cap Y$ for all $n$. 

    \begin{claim}
        If $B \in E^{[\infty]}$ and $\c{B} \subseteq Y$, then $B \leq D_n$ for some $n$.
    \end{claim}

    \begin{midproof}
        We write $B = (z_k)_{k<\omega}$, and suppose that $z_0 \in \c{D_n}$. We shall show that $z_k \in \c{D_n}$ for all $k$, which implies that $\c{B} \subseteq \c{D_n}$. Suppose that $z_k \in \c{D_m}$ for some $m \neq n$ and $k > 0$. $z_0 + z_k \notin \c{D_n}$, as:
        \begin{align*}
            \supp(z_0 + z_k) = \supp(z_0) \cup \supp(z_k) \not\subseteq \bigcup_{j<\omega} \supp(y_j^n).
        \end{align*}
        Similarly, $z_0 + z_k \notin \c{D_m}$. For each $i \neq m,n$, since:
        \begin{align*}
            \supp(z_0) \cap \bigcup_{j<\omega} \supp(y_j^i) = \emptyset,
        \end{align*}
        we may conclude that $z_0 + z_k \notin \c{D_i}$ either. Therefore, $z_0 + z_k \in \c{B} \setminus Y$, a contradiction.
    \end{midproof}

    We may now finish the proof of the lemma. Suppose that there exists some $V \in \A$ such that $V \subseteq Y$. By the claim above, we have that $V \subseteq \c{D_n} \subseteq V_n$ for some $n$. Since $\A$ is an almost disjoint family, we have that $V = V_n$. But this implies that $\c{B_n} \subseteq V_n \subseteq \c{D_n} \subseteq \c{C_n}$, which contradicts that $\c{B_n} \cap \c{C_n} = \emptyset$. 
\end{proof}

The proof of Theorem \ref{thm:main.full.mad.family} may be adjusted to obtain a completely separable mad family of block subspaces, with the main change being at the induction step. We describe the details here.

\begin{proof}[Proof of Theorem \ref{thm:cs.mad.family}]
    Let $\{Z_\alpha : \alpha < \frak{bs}\}$ be a family of subsets of vectors satisfying the conclusions of Lemma \ref{lem:splitting.lemma}. We write $Z_\alpha^0 := Z_\alpha$ and $Z_\alpha^1 := Z_\alpha^c$ for all $\alpha$. For a fixed almost disjoint family $\A \subseteq E^{[\infty]}$ and $B \in \H^-(\A)$, there exist $\alpha < \frak{bs}$ and $\tau_B^\A \in 2^\alpha$ such that:
    \begin{enumerate}
        \item If $\beta < \alpha$, then $\c{B} \cap Z_\beta^{1-\tau_B^\A(\beta)} \in \I(\A)$ (so $\c{B} \cap Z_\beta^{\tau_B^\A(\beta)} \in \I^+(\A)$ by property (i) of Lemma \ref{lem:splitting.lemma});

        \item $\c{B} \cap Z_\alpha^0 \in \I^+(\A)$ and $\c{B} \cap Z_\alpha^1 \in \I^+(\A)$ (such an $\alpha$ exists by property (iv) of Lemma \ref{lem:splitting.lemma}). 
    \end{enumerate}
    Observe that for a fixed almost disjoint family $\A$, $\tau_B^\A \in 2^{<\frak{bs}}$ is unique, and if $B' \leq B$ and $B' \in \H^-(\A)$, then $\tau_B^\A \subseteq \tau_{B'}^\A$. Let $\{U_\alpha : \alpha < \frak{c}\}$ be an enumeration of all subspaces of $E$. We may need that for any subspace $U \subseteq E$, there are cofinally many $\alpha < \frak{c}$ such that $U_\alpha = U$. We shall define our mad family $\A = \{A_\alpha : \alpha < \frak{c}\}$ recursively, where $\A_\alpha := \{A_\beta : \beta < \alpha\}$, along with an injective sequence $\{\sigma_\alpha : \alpha < \frak{c}\}$ of elements of $2^{<\frak{bs}}$ such that the following holds:
    \begin{enumerate}
        \item If $\beta < \alpha$, then $\sigma_\alpha \not\sqsubseteq \sigma_\beta$.

        \item If $\xi < \dom(\sigma_\alpha)$, then $\c{A_\alpha} \setminus Z_\xi^{\sigma_\alpha(\xi)}$ is small (hence, by Lemma \ref{lem:ad.is.very.small}, is very small).
    \end{enumerate}
    For our base case, we let $\A_\omega$ be any countable almost disjoint family. Assume that $A_\beta$, $\sigma_\beta$ have been defined for $\beta < \alpha$ such that they satisfy the requirements stated above so far. We inductively define $\{C_s : s \in 2^{<\omega}\} \subseteq \H^-(\A_\alpha)$, $\{\eta_s : s \in 2^{<\omega}\} \subseteq 2^{<\frak{bs}}$ and ordinals $\{\alpha_s : s \in 2^{<\omega}\}$ as follows:
    \begin{enumerate}
        \item If $U_\alpha \in \I^+(\A_\alpha)$, let $C_\emptyset \in \H^-(\A_\alpha)$ such that $\c{C_\emptyset} \subseteq U_\alpha$. This is possible by Lemma \ref{lem:block.refinement.lemma}. If $U_\alpha \notin \I^+(\A_\alpha)$, let $C_\emptyset \in \H^-(\A_\alpha)$ be any block subspace.

        \item $\eta_s := \tau_{C_s}^{\A_\alpha}$ and $\alpha_s := \dom(\eta_s)$.

        \item $C_{s^\frown0},C_{s^\frown1} \in \H^-(\A_\alpha)$ are such that $\c{C_{s^\frown0}} \subseteq \c{C_s} \cap Z_{\alpha_s}^0$ and $\c{C_{s^\frown1}} \subseteq \c{C_s} \cap Z_{\alpha_s}^1$ (which both exist by properties (ii) and (iii) of Lemma \ref{lem:splitting.lemma}).
    \end{enumerate}
    We then define $\eta_f := \bigcup_{n<\omega} \eta_{f\restrictedto n}$ for any $f \in 2^\omega$, and note that $\eta_f \in 2^{<\frak{bs}}$ as $\frak{bs}$ has uncountable cofinality. We note the following observations:
    \begin{enumerate}
        \item If $s \sqsubseteq t$, then $C_s \leq C_t$ and $\eta_s \sqsubseteq \eta_t$.

        \item If $s$ and $t$ are incompatible, then so are $\eta_s$ and $\eta_t$.

        \item Consequently, $f \neq g$, then $\eta_f$ and $\eta_g$ are incompatible.
    \end{enumerate}
    Since $\alpha < \frak{c}$, we may let $f \in 2^\omega$ be such that $\eta_f \not\sqsubseteq \sigma_\beta$ for all limit ordinals $\beta < \alpha$. We now have a $\leq$-decreasing sequence $(C_{f\restrictedto n})_{n<\omega}$ in $\H^-(\A_\alpha)$, so by Proposition \ref{prop:H^-(A).is.(p)} there exists some $D \in \H^-(\A_\alpha)$ such that $D \leq^* C_{f\restrictedto n}$ for all $n$.

    Let $\sigma_{\alpha} := \eta_f$. Observe that for all $\xi < \dom(\eta_f)$, if $m < \omega$ is such that $\xi \in \dom(\eta_{f\restrictedto n})$, and $D/N \leq C_{f\restrictedto n}$, then:
    \begin{align*}
        \c{D/N} \cap Z_\xi^{1-\eta_f(\xi)} = \c{D/N} \cap Z_\xi^{1-\eta_{f\restrictedto n}(\xi)} \subseteq \c{C_{f\restrictedto n}} \cap Z_\xi^{1-\eta_{f\restrictedto n}(\xi)} \in \I(\A_\alpha),
    \end{align*}
    so that for all $\xi < \dom(\sigma_{\alpha})$, $\c{D} \cap Z_\xi^{1-\sigma_\alpha(\xi)} \in \I(\A)$ (as $\c{D} \setminus \c{D/N}$ is very small, by Lemma \ref{lem:ad.is.very.small}). Therefore, for each $\xi < \dom(\sigma_\alpha)$ we may find a finite subset $\F_\xi \subseteq \A_\alpha$ such that $\bb{\c{D} \cap Z_\xi^{1-\sigma_\alpha(\xi)}} \setminus \bigcup_{A \in \F_\xi} \c{A}$ is small. Now let:
    \begin{align*}
        \B_0 &:= \{A_{\beta+n} : \sigma_\beta \sqsubseteq \sigma_\alpha\}, \\
        \B_1 &:= \bigcup_{\xi < \dom(\sigma_\alpha)} \F_\xi, \\
        \B &:= \B_0 \cup \B_1.
    \end{align*}

    One may show, in the same way of proving Claim \ref{claim:B.smaller.than.bs}, that $|\B| < \frak{bs}$. By Lemma \ref{lem:block.subspaces.intersection.lemma}, the set:
    \begin{align*}
        \{\c{B'} \cap \c{D} : B' \in \B \text{ and } \c{B'} \cap \c{D} \text{ is infinite-dimensional}\}
    \end{align*}
    is an almost disjoint family of block subspaces of $\c{D}$. Since $D \in \H^-(\A_\alpha)$, we have that $\c{D}$ is not in this set. By Lemma \ref{lem:bs.leq.avec}, $|\B| < \frak{bs} \leq \avec$, so there exists some $A_\alpha \leq D$ such that $A_\alpha$ is almost disjoint from every element of $\B$. 

    \begin{claim}
        \hfill
        \begin{enumerate}
            \item For all $\xi \in \dom(\sigma_\alpha)$, $\c{A_\alpha} \setminus Z_\xi^{\sigma_\alpha(\xi)}$ is small.

            \item For all $\beta < \alpha$, $A_\alpha$ and $A_\beta$ are almost disjoint.
        \end{enumerate}
    \end{claim}

    \begin{midproof}
        \hfill
        \begin{enumerate}
            \item Note that $\c{A_\alpha} \setminus Z_\xi^{\sigma_\alpha(\xi)} = \c{A_\alpha} \cap Z_\xi^{1-\sigma_\alpha(\xi)}$. We have that $\c{A_\alpha} \cap Z_\xi^{1 - \sigma_\alpha(\xi)} \setminus \bigcup_{A \in \F_\xi} \c{A}$ is small by above, and since $A_\alpha$ is almost disjoint from every $A \in \F_\xi$, we have that:
            \begin{align*}
                \c{A_\alpha} \cap Z_\xi^{1 - \sigma_\alpha(\xi)} \cap \bigcup_{A \in \F_\xi} \c{A} &\subseteq \bigcup_{A \in \F_\xi} \c{A_\alpha} \cap \c{A}.
            \end{align*}
            By Lemma \ref{lem:ad.is.very.small}, $\c{A_\alpha} \cap \c{A}$ is very small for each $A \in \F_\xi$, so $\c{A_\alpha} \cap Z_\xi^{1-\sigma_\alpha(\xi)}$ is small. 

            \item Let $\beta < \alpha$. If $A_\beta \in \B_0$, then $A_\alpha$ is almost disjoint from $A_\beta$ by construction, so we are done. Now assume that $A_\beta \notin \B_0$. Let $\xi$ be the least ordinal such that $\sigma_\beta(\xi) \neq \sigma_\alpha(\xi)$. By induction hypothesis, we have that $W := \c{A_\beta} \setminus Z_\xi^{\sigma_\beta(\xi)} = \c{A_\beta} \setminus Z_\xi^{1-\sigma_\alpha(\xi)}$ is small (and hence very small). Thus, we have that for some small set $W'$:
            \begin{align*}
                \c{A_\alpha} \cap \c{A_\beta} &\subseteq W \cup (\c{A_\alpha} \cap Z_\xi^{1-\sigma_\alpha(\xi)}) \\
                &\subseteq W \cup W' \cup \bigcup_{A \in F_\xi} \c{A_\alpha} \cap \c{A},
            \end{align*}
            which is also small as all the components in the union are very small, as desired.
        \end{enumerate}
    \end{midproof}
    This completes the induction. If $U \in \I^+(\A)$, then $\I^+(\A_\alpha)$ for all $\alpha$, and there exists some $\alpha$ such that $U_\alpha = U$. This implies that $\c{A_\alpha} \subseteq U$, as desired. 
\end{proof}

\section{Topological Ramsey spaces and the abstract Mathias forcing}
\label{sec:TRS.and.abstract.mathias.forcing}
We dedicate this section to an overview of the axioms of topological Ramsey spaces, and the abstract Mathias forcing developed in \cite{DMN15}. 

\subsection{Topological Ramsey spaces} 
We recap the four axioms presented by Todor\v{c}evi\'{c} in \cite{T10}, which are sufficient conditions for a triple $(\cal{R},\leq,r)$ to be a topological Ramsey space. Here, $\cal{R}$ is a non-empty set, $\leq$ a quasi-order on $\cal{R}$, and $r : \cal{R} \times \omega \to \cal{AR}$ a surjective function. We also define a sequence of maps $r_n : \cal{R} \to \cal{AR}$ by $r_n(A) := r(A,n)$ for all $A \in \cal{R}$. Let $\cal{AR}_n \subseteq \cal{AR}$ be the image of $r_n$ (i.e. $a \in \cal{AR}_n$ iff $a = r_n(A)$ for some $A \in \cal{R}$). 

The four axioms are as follows:
\begin{enumerate}
    \item[\textbf{(A1)}] 
    \begin{enumerate}[label=(\arabic*)]
        \item $r_0(A) = \emptyset$ for all $A \in \cal{AR}$.

        \item $A \neq B$ implies $r_n(A) \neq r_n(B)$ for some $n$.

        \item $r_n(A) = r_m(B)$ implies $n = m$ and $r_k(A) = r_k(B)$ for all $k < n$.
    \end{enumerate}
    For each $a \in \cal{AR}$, let $\lh(a)$ denote the unique $n$ in which $a \in \cal{AR}_n$. By Axiom \textbf{A1}(3), this $n$ is well-defined. Given $a,b \in \cal{AR}$, we write $a \sqsubseteq b$ if there exists some $A \in \cal{R}$ such that $a = r_n(A)$ and $b = r_m(A)$ for some $n \leq m$.

    \item[\textbf{(A2)}]  There is a quasi-ordering $\leq_\fin$ on $\cal{AR}$ such that:
    \begin{enumerate}[label=(\arabic*)]
        \item $\{a \in \cal{AR} : a \leq_\fin b\}$ is finite for all $b \in \cal{AR}$.

        \item $A \leq B$ iff $\forall n \exists m [r_n(A) \leq_\fin r_m(B)]$.

        \item $\forall a,b \in \cal{AR}[a \sqsubseteq b \wedge b \leq_\fin c \to \exists d \sqsubseteq c[a \leq_\fin d]]$.
    \end{enumerate}

    \item[\textbf{(A3)}] We may define the \emph{Ellentuck neighbourhoods} as follows: For any $A \in \cal{R}$, $a \in \cal{AR}$ and $n \in \N$, we let:
    \begin{align*}
        [a,A] &:= \{B \in \cal{R} : B \leq A \wedge \exists n[r_n(B) = a]\}, \\
        [n,A] &:= [r_n(A),A].
    \end{align*}
    Then the depth function defined by, for $B \in \cal{R}$ and $a \in \cal{AR}$:
    \begin{align*}
        \depth_B(a) := 
        \begin{cases}
            \min\{n < \omega : a \leq_\fin r_n(B)\}, &\text{if such $n$ exists}, \\
            \infty, &\text{otherwise},
        \end{cases}
    \end{align*}
    satisfies the following:
    \begin{enumerate}[label=(\arabic*)]
        \item If $\depth_B(a) < \infty$, then for all $A \in [\depth_B(a),B]$, $[a,A] \neq \emptyset$.

        \item If $A \leq B$ and $[a,A] \neq \emptyset$, then there exists $A' \in [\depth_B(a),B]$ such that $\emptyset \neq [a,A'] \subseteq [a,A]$.
    \end{enumerate}
    For each $A \in \cal{R}$, we let:
    \begin{align*}
        \cal{AR}\restrictedto A := \{a \in \cal{AR} : \exists n[a \leq_\fin r_n(A)]\}.
    \end{align*}
    If $a \in \cal{AR}\restrictedto A$, we also define:
    \begin{align*}
        \cal{AR}\restrictedto[a,A] &:= \{b \in \cal{AR}\restrictedto A : a \sqsubseteq b\}, \\
        r_n[a,A] &:= \{b \in \cal{AR}\restrictedto[a,A] : \lh(b) = n\}.
    \end{align*}

    \item[\textbf{(A4)}] If $\depth_B(a) < \infty$ and if $\O \subseteq \cal{AR}_{\lh(a)+1}$, then there exists $A \in [\depth_B(a),B]$ such that $r_{\lh(a)+1}[a,A] \subseteq \O$ or $r_{\lh(a)+1}[a,A] \subseteq \O^c$.
\end{enumerate}

Note that by axiom \textbf{A1}, we may identify each element $A \in \cal{R}$ as a sequence of elements of $\cal{AR}$, via the map $A \mapsto (r_n(A))_{n<\omega}$. Therefore, we may identify $\cal{R}$ as a subset of $\cal{AR}^\N$. $\cal{AR}^\N$ may be equipped with a natural metric topology by considering the first difference metric --- that is, if $(a_n)_{n<\omega},(b_n)_{n<\omega} \in \cal{AR}^\N$, then:
\begin{align*}
    d((a_n)_{n<\omega},(b_n)_{n<\omega}) = 2^{-\min\{n : a_n \neq b_n\}}.
\end{align*}

\begin{definition}
    A triple $(\cal{R},\leq,r)$ is said to be a \emph{closed triple} if $\cal{R}$ is a metrically closed subset of $\cal{AR}^\N$. A closed triple $(\cal{R},\leq,r)$ is an \emph{\textbf{A2}-space} if $(\cal{R},\leq,r)$ satisfies the axioms \textbf{A1}-\textbf{A3}.
\end{definition}

An \textbf{A2}-space $(\cal{R},\leq,r)$ satisfying \textbf{A4} is a topological Ramsey space, as it satisfies the abstract Ellentuck theorem --- that is, a subset $\X \subseteq \cal{R}$ is Ramsey iff it has the property of Baire relative to the Ellentuck topology. This is the abstract Ellentuck theorem by Todor\v{c}evi\'{c} --- see Theorem 5.4 of \cite{T10}.

Mathias' original proof of his celebrated theorem in \cite{M77} (i.e. Theorem \ref{thm:mathias.thm.vector.spaces} for $[\omega]^\omega$) uses the (original) Mathias forcing localised to a coideal $\H$. As we shall use a similar approach to prove Theorem \ref{thm:mathias.thm.vector.spaces}, it is necessary to extend the notion of a coideal to topological Ramsey spaces. 

\begin{definition}[Definition 3.1, \cite{DMN15}]
\label{def:coideal}
    Let $(\cal{R},\leq,r)$ be a closed triple satisfying \textbf{A1}-\textbf{A4}. A set $\H \subseteq \cal{R}$ is a \emph{coideal} if it satisfies the following:
    \begin{enumerate}
        \item (Upward-Closure) If $A \in \H$ and $A \leq B$, then $B \in \H$.
        
        \item (\textbf{A3} mod $\H$) For all $A \in \H$ and $a \in \cal{AR}\restrictedto A$, we have that:
        \begin{enumerate}
            \item $[a,B] \cap \H \neq \emptyset$ for all $B \in [\depth_A(a),A] \cap \H$.
            
            \item If $B \in \H \restrictedto A$ and $[a,B] \neq \emptyset$, then there exists some $A' \in [\depth_A(a),A] \cap \H$ such that $\emptyset \neq [a,A'] \subseteq [a,B]$.
        \end{enumerate}
        
        \item (\textbf{A4} mod $\H$) Fix $A \in \H$ and $a \in \cal{AR} \restrictedto A$. For any $\O \subseteq \cal{AR}_{\lh(a)+1}$, there exists $B \in [\depth_A(a),A] \cap \H$ such that $r_{\lh(a)+1}[a,B] \subseteq \O$ or $r_{\lh(a)+1}[a,B] \subseteq \O^c$.
    \end{enumerate}
\end{definition}

A notion of semiselectivity for a coideal $\H$ was also given as Definition 3.4-3.7 in \cite{DMN15}, which says for all $A \in \H$ and a family of dense open sets, there exists some $B \in \H\restrictedto A$ that diagonalises the family. We shall propose a slightly stronger version of semiselectivity, which instead asks that for all $A \in \H$, $a \in \cal{AR}\restrictedto A$ and family of dense open sets, there exists some $B \in \H\restrictedto[a,A]$ such that $B$ diagonalises the family.

\begin{definition}
\label{def:semiselective}
    Let $(\cal{R},\leq,r)$ be a closed triple satisfying \textbf{A1}-\textbf{A4}, and let $\H \subseteq \cal{R}$ be a coideal. Let $A \in \H$ and $a \in \cal{AR}\restrictedto A$.
    \begin{enumerate}
        \item A family of subsets $\vec{\D} = \{\D_b\}_{b \in \cal{AR}\restrictedto[a,A]}$ is \emph{dense open below $[a,A]$ in $\H$} if for all $b \in \cal{AR}\restrictedto[a,A]$, $\D_b$ is a $\leq$-downward closed subset of $\H\restrictedto[b,A]$, and for all $B \in \H\restrictedto[b,A]$, there exists some $C \in \H\restrictedto[b,B]$ such that $C \in \D_b$.
        
        \item Let $\vec{\D} = \{\D_b\}_{b \in \cal{AR}\restrictedto[a,A]}$ be dense open below $[a,A]$ in $\H$. We say that $B \in \H\restrictedto[a,A]$ \emph{diagonalises} $\vec{\D}$ if for all $b \in \cal{AR}\restrictedto[a,A]$, there exists some $A_b \in \D_b$ such that $[b,B] \subseteq [b,A_b]$.

        \item We say that $\H$ is \emph{semiselective} if for all $A \in \H$, $a \in \cal{AR}\restrictedto A$, every dense open family below $[a,A]$ in $\H$ has a diagonalisation in $\H$.
    \end{enumerate}
\end{definition}

\begin{lemma}
    Let $(\cal{R},\leq,r)$ be a closed triple satisfying \textbf{A1}-\textbf{A4}. Then $\cal{R}$ is a semiselective coideal.
\end{lemma}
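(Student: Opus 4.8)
The statement has two halves: that $\cal{R}$ is a coideal in the sense of Definition~\ref{def:coideal}, and that it is semiselective in the sense of Definition~\ref{def:semiselective}. The plan is to dispose of the coideal half essentially by inspection, and to devote the real work to semiselectivity via a fusion argument. For the coideal half, set $\H = \cal{R}$. Upward closure is immediate, since every reduction of an element of $\cal{R}$ already lies in $\cal{R}$. For the clauses ``\textbf{A3} mod $\H$'' and ``\textbf{A4} mod $\H$'', intersecting with $\H = \cal{R}$ and quantifying over $[\depth_A(a),A] \cap \cal{R} = [\depth_A(a),A]$ simply recovers the unrelativised axioms: after relabelling the witnessing block, clause (2)(a) of Definition~\ref{def:coideal} is exactly \textbf{A3}(1), clause (2)(b) is exactly \textbf{A3}(2), and clause (3) is \textbf{A4} verbatim. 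Thus $\cal{R}$ is a coideal directly from \textbf{A1}--\textbf{A4}.

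The substance is semiselectivity. I would fix $A \in \cal{R}$, $a \in \cal{AR}\restrictedto A$ with $k := \lh(a)$, and a family $\vec{\D} = \{\D_b\}$ dense open below $[a,A]$ in $\cal{R}$, and build a diagonalising $B \in [a,A]$ by fusion. The idea is to maintain a $\leq$-decreasing sequence $A = E_0 \geq E_1 \geq \cdots$ in $\cal{R}$ with $E_0 \in [a,A]$ (nonempty by \textbf{A3}(1), so $r_k(E_0) = a$) and with $r_{k+n}(E_{n+1}) = r_{k+n}(E_n)$, so that the approximations stabilise; by the usual fusion bookkeeping together with the metric closedness of the triple, $B$ defined by $r_{k+n}(B) := r_{k+n}(E_n)$ is then a genuine element of $\cal{R}$ with $B \leq E_n$ for every $n$. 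At stage $n$ the goal is, for each $b$ in the finite set $\cal{B}_n := r_{k+n}[a,E_n]$ (finite by \textbf{A2}(1)), to secure some $A_b \in \D_b$ with $[b,E_{n+1}] \subseteq [b,A_b]$.

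The core of the construction is this finite amalgamation step, and it is where I expect the only genuine difficulty. Enumerating $\cal{B}_n = \{b^0,\dots,b^{p-1}\}$, I would refine $E_n$ one stem at a time: density of $\D_{b^i}$ yields some $A_{b^i} \in \D_{b^i}$ inside the relevant neighbourhood, and \textbf{A3}(2) then produces a reduction whose $b^i$-neighbourhood is pushed inside $[b^i, A_{b^i}]$. The delicate point is depth bookkeeping: \textbf{A3}(2) only locates its refinement in $[\depth(b^i),\cdot]$, which may lie strictly below level $k+n$ and so threaten both the stabilisation $r_{k+n}(E_{n+1}) = r_{k+n}(E_n)$ and the captures already arranged for $b^0,\dots,b^{i-1}$; managing this (e.g.\ by ordering the stems by depth and repeatedly invoking \textbf{A3} to amalgamate the finitely many refinements into a single $E_{n+1}$ that agrees with $E_n$ up to level $k+n$) is the main technical obstacle, and I note that it uses \textbf{A3} but not the pigeonhole axiom \textbf{A4}. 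Finally, to verify that $B$ diagonalises $\vec{\D}$: given $b \in \cal{AR}\restrictedto[a,A]$, if $[b,B] = \emptyset$ the requirement is vacuous (and $\D_b \neq \emptyset$ by \textbf{A3}(1) and density), whereas if $[b,B] \neq \emptyset$ then, with $n := \lh(b)-k$, we have $b \in \cal{B}_n$ since $B \leq E_n$, whence $[b,B] \subseteq [b,E_{n+1}] \subseteq [b,A_b]$ with $A_b \in \D_b$, which is precisely the diagonalisation condition.
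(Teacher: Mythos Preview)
Your proposal is correct and takes essentially the same approach as the paper: the coideal half is immediate from \textbf{A1}--\textbf{A4}, and semiselectivity is obtained by a fusion sequence in $[a,A]$, handling at each stage the finitely many relevant stems via repeated applications of density and \textbf{A3}(2). The paper indexes the stems slightly differently (enumerating all $b$ with $a \sqsubseteq b$ and $b \leq_\fin a_{n+1}$ rather than those of a fixed length) and, like you, leaves the depth bookkeeping largely implicit; your explicit flagging of that point is appropriate.
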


This is stated as Lemma 3.11 of \cite{Y24}. We replicate the proof here. 

\begin{proof}
    Note that $\cal{R}$ is a coideal by definition. Fix some $A \in \cal{R}$ and $a \in \cal{AR}\restrictedto A$. Suppose that $\vec{\D} = \{\D_b\}_{b \in \cal{AR}\restrictedto[a,A]}$ is dense open below $[a,A]$. We shall define a fusion sequence $(A_n)_{n<\omega}$ in $[a,A]$, with $a_{n+1} := r_{\lh(a)+n+1}(A_n)$, such that $A_{n+1} \in [a_{n+1},A_n]$: Let $A_0 := A$, and suppose that $A_n$ has been defined. Let $\{b_i : i < N\}$ enumerate the set of all $b \in \cal{AR}\restrictedto A_n$ such that $a \sqsubseteq b$ and $b \leq_\fin a_{n+1}$. Let $A_{n+1}^0 := A_n$. If $A_{n+1}^i \in [a_{n+1},A_n]$ has been defined, let $B_{n+1} \in \D_{b_i}$ be such that $B_{n+1} \in [b_i,A_{n+1}^i]$, which exists as $\D_{b_i}$ is dense open in $[b_i,A]$. By \textbf{A3}, we then let $A_{n+1}^{i+1} \in [a_{n+1},A_n^i]$ such that $[b_i,A_{n+1}^{i+1}] \subseteq [b_i,B_{n+1}]$. We complete the induction by letting $A_{n+1} := A_{n+1}^N$. Let $B$ be the limit of the fusion sequence $(A_n)_{n<\omega}$. Since $\cal{R}$ is metrically closed in $\cal{AR}^\N$, $B \in \cal{R}$ is well-defined, and we have that $B$ diagonalises $\vec{\D}$. 
\end{proof}

We will see that the semiselectivity property plays an important role in both combinatorial forcing and the abstract Mathias forcing.

\subsection{Combinatorial forcing}
Let $(\cal{R},\leq,r)$ be a closed triple satisfying \textbf{A1}-\textbf{A4}, and let $\H \subseteq \cal{R}$ be a coideal. We also fix a subset $\X \subseteq \cal{R}$. Unless stated otherwise, we assume that $\H$ is semiselective.

\begin{definition}
    Let $A \in \H$ and let $a \in \cal{AR}$. We say that: 
    \begin{enumerate}
        \item $A$ \emph{accepts} $a$ if $[a,A] \subseteq \X$.
        
        \item $A$ \emph{rejects} $a$ if $a \in \cal{AR}\restrictedto A$, and for all $B \in \H\restrictedto[a,A]$, $B$ does not accept $a$.
        
        \item $A$ \emph{decides} $a$ if $A$ accepts or rejects $a$.
    \end{enumerate}
\end{definition}

This localised variant of combinatorial forcing for topological Ramsey spaces was introduced in \cite{DMN15}, along with various basic properties of this combinatorial forcing. The details of the proofs of these properties with our definition of semiselectivity (Definition \ref{def:semiselective}) remains mostly unchanged, but we shall provide an overview for the sake of completion.

\begin{lemma}
\label{lem:combinatorial.forcing.basics}
    Let $A \in \H$ and let $a \in \cal{AR}$.
    \begin{enumerate}
        \item If $a \notin\cal{AR}\restrictedto A$, $A$ accepts $a$ (as $[a,A] = \emptyset$).
        
        \item If $A$ accepts $a$ and $B \in \H\restrictedto A$, then $B$ accepts $a$.
        
        \item If $A$ rejects $a$, $B \in \H\restrictedto A$ and $a \in \cal{AR}\restrictedto B$, then $B$ rejects $a$.
        
        \item If $A$ decides $a$ and $B \in \H\restrictedto A$, then $B$ decides $a$.
        
        \item If $a \in \cal{AR}\restrictedto A$, then there exists some $B \in \H\restrictedto[\depth_A(a),A]$ which decides $B$.
        
        \item If $a \in \cal{AR}\restrictedto A$ and $A$ decides $a$, then for all $B \in \H\restrictedto[\depth_A(a),A]$, $B$ decides $a$ in the same way $A$ does.
        
        \item $A$ accepts $a$ iff $A$ accepts every $b \in r_{\lh(a)+1}[a,A]$.
    \end{enumerate}
\end{lemma}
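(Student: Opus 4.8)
The plan is to establish the seven clauses in the listed order, since the later ones invoke the earlier ones, and to lean throughout on two elementary monotonicity facts about Ellentuck neighbourhoods: if $B \leq A$ then $[a,B] \subseteq [a,A]$, and if $a \sqsubseteq b$ then $[b,A] \subseteq [a,A]$. Because acceptance is defined by the inclusion $[a,A] \subseteq \X$, both facts immediately make acceptance hereditary in the appropriate directions, and this is the engine behind clauses (1), (2) and (7).

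First I would dispose of the formal clauses. For (1), if $a \notin \cal{AR}\restrictedto A$ then no $B \leq A$ can satisfy $r_n(B) = a$, since by \textbf{A2}(2) this would force $a \leq_\fin r_m(A)$; hence $[a,A] = \emptyset \subseteq \X$ and $A$ accepts $a$ vacuously. Clause (2) is the first monotonicity fact. For (3), if some $C \in \H\restrictedto[a,B]$ accepted $a$, then $C \in \H\restrictedto[a,A]$ (as $C \leq B \leq A$) would also accept $a$, contradicting that $A$ rejects $a$; so $B$ rejects $a$. Clause (4) then follows by cases: if $A$ accepts $a$, use (2); if $A$ rejects $a$, then either $a \in \cal{AR}\restrictedto B$ and (3) applies, or $a \notin \cal{AR}\restrictedto B$ and (1) applies. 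Clause (7) is proved by the second monotonicity fact in the forward direction, and in the backward direction by observing that any $C \in [a,A]$ lies in $[b,A]$ for $b := r_{\lh(a)+1}(C) \in r_{\lh(a)+1}[a,A]$, so that acceptance of every such $b$ forces $C \in \X$.

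The substantive step is clause (5), where I expect the main obstacle to lie, since this is the only place the coideal axiom \textbf{A3} mod $\H$ is genuinely needed to localise a decision to a neighbourhood of the form $[\depth_A(a),A]$. If $A$ already rejects $a$, then $A$ itself lies in $[\depth_A(a),A] \cap \H$ and decides $a$. If $A$ does not reject $a$, then by definition some $C \in \H\restrictedto[a,A]$ accepts $a$, so $[a,C] \subseteq \X$; since $C \in \H\restrictedto A$ and $[a,C] \neq \emptyset$, part (b) of \textbf{A3} mod $\H$ yields $A' \in [\depth_A(a),A] \cap \H$ with $[a,A'] \subseteq [a,C] \subseteq \X$, so $A'$ accepts $a$. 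Finally, clause (6) is a direct consequence of (2) and (3): acceptance descends by (2), while for rejection any $B \in [\depth_A(a),A] \cap \H$ satisfies $a \in \cal{AR}\restrictedto B$, so (3) forces $B$ to reject $a$ as well. The only care needed is the repeated verification that membership in $[\depth_A(a),A]$ guarantees $r_{\depth_A(a)}(B) = r_{\depth_A(a)}(A) \geq_\fin a$, hence $a \leq_\fin r_{\depth_A(a)}(B)$; this is exactly the role played by \textbf{A1}(3) together with the depth function.
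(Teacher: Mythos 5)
Your proposal is correct and follows essentially the same route as the paper, which does not write the argument out at all but simply refers to the non-localised variant (Lemma 4.31 of Todor\v{c}evi\'{c}'s book); your clauses (1)--(4) and (7) are exactly the standard monotonicity arguments that citation alludes to. The only genuinely localised steps --- invoking \textbf{A3} mod $\H$(b) in clause (5) to pull an accepting reduction back into $[\depth_A(a),A] \cap \H$, and using \textbf{A1}(3) in clause (6) to get $a \in \cal{AR}\restrictedto B$ for $B \in \H\restrictedto[\depth_A(a),A]$ --- are handled correctly in your write-up, so it supplies precisely the details the paper leaves implicit.
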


\begin{proof}
    The proofs are straightforward and mostly the same as the non-localised variant of combinatorial forcing --- we refer readers to Lemma 4.31 of \cite{T10}.
\end{proof}

\begin{lemma}
\label{lem:decisive.reduction}
    For all $A \in \H$ and $a \in \cal{AR}\restrictedto A$, then there exists some $B \in \H\restrictedto[a,A]$ which decides every $b \in \cal{AR}\restrictedto[a,A]$.
\end{lemma}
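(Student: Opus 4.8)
The plan is to apply the semiselectivity of $\H$ to a dense open family assembled from the single-approximation decision lemma, Lemma \ref{lem:combinatorial.forcing.basics}(5). For each $b \in \cal{AR}\restrictedto[a,A]$ I would set
\[
\D_b := \{C \in \H\restrictedto[b,A] : C \text{ decides } b\},
\]
and claim that $\vec{\D} = \{\D_b\}_{b \in \cal{AR}\restrictedto[a,A]}$ is dense open below $[a,A]$ in $\H$ in the sense of Definition \ref{def:semiselective}(1). Diagonalising this family will produce the desired $B$.

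First I would verify the two requirements of dense openness. Downward closure of $\D_b$ inside $\H\restrictedto[b,A]$ is immediate from Lemma \ref{lem:combinatorial.forcing.basics}(4): any $C' \leq C$ with $C' \in \H\restrictedto[b,A]$ still has $b$ as an initial segment, so it still decides $b$ whenever $C$ does. For density, fix $B \in \H\restrictedto[b,A]$; since $b$ is an initial segment of $B$ we have $\depth_B(b) = \lh(b)$, and hence $[\depth_B(b),B] = [b,B]$, so Lemma \ref{lem:combinatorial.forcing.basics}(5) applied to $B$ and $b$ yields some $C \in \H\restrictedto[b,B]$ deciding $b$, i.e. $C \in \D_b$.

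Next I would invoke semiselectivity (Definition \ref{def:semiselective}(3)) to obtain $B \in \H\restrictedto[a,A]$ diagonalising $\vec{\D}$, so that for every $b \in \cal{AR}\restrictedto[a,A]$ there is some $A_b \in \D_b$ with $[b,B] \subseteq [b,A_b]$. It then remains to check that this $B$ decides each such $b$. If $b \notin \cal{AR}\restrictedto B$ then $[b,B] = \emptyset \subseteq \X$, so $B$ accepts $b$ by Lemma \ref{lem:combinatorial.forcing.basics}(1). Otherwise $b \in \cal{AR}\restrictedto B$, and I split on how $A_b$ decides $b$: if $A_b$ accepts $b$ then $[b,B] \subseteq [b,A_b] \subseteq \X$, so $B$ accepts $b$; if $A_b$ rejects $b$, then every $C \in \H\restrictedto[b,B]$ lies in $[b,A_b]$ by the inclusion $[b,B] \subseteq [b,A_b]$, so no such $C$ accepts $b$, whence $B$ rejects $b$.

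The verification of density is routine once the identity $\depth_B(b) = \lh(b)$ for $b$ an initial segment of $B$ is made explicit (this is exactly what places the conclusion of Lemma \ref{lem:combinatorial.forcing.basics}(5) inside $[b,B]$ rather than merely inside $[\depth_B(b),B]$). The main obstacle will be the rejection case of the last paragraph: one must transfer the "no $\H$-reduction accepts $b$" property from $A_b$ to $B$ using only the Ellentuck-neighbourhood inclusion $[b,B] \subseteq [b,A_b]$ furnished by the diagonalisation, rather than a direct comparison $B \leq A_b$, so care is needed to see that every $C \in \H\restrictedto[b,B]$ indeed counts as a witness against $A_b$'s rejection of $b$.
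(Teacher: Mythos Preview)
Your proposal is correct and follows essentially the same route as the paper: define $\D_b$ as the set of elements of $\H\restrictedto[b,A]$ that decide $b$, invoke semiselectivity to diagonalise, and then verify that the diagonalisation $B$ decides every $b$ by a case split on whether $b \in \cal{AR}\restrictedto B$. Your treatment of the rejection case is in fact slightly more careful than the paper's, which writes ``$B \in \H\restrictedto A_b$'' where only the neighbourhood inclusion $[b,B] \subseteq [b,A_b]$ is actually available; your argument uses exactly that inclusion, so the concern you flag in your final paragraph is already resolved by what you wrote.
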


\begin{proof}
     For each $b \in \cal{AR}\restrictedto[a,A]$, let:
     \begin{align*}
        \D_b := \{B \in \H\restrictedto[b,A] : B \text{ decides } b\}.
     \end{align*}
     By Lemma \ref{lem:combinatorial.forcing.basics}(5), $\D_b$ is dense open in $\H\restrictedto[b,A]$. Since $\H$ is semiselective, there exists some $B \in \H\restrictedto[a,A]$ which diagonalises $\{\D_b\}_{b\in\cal{AR}\restrictedto[a,A]}$. If $b \in \cal{AR}\restrictedto B$, then $[b,B] \subseteq [b,A_b]$ for some $A_b \in \D_b$. Since $A_b$ decides $b$, $B \in \H\restrictedto A_b$ and $b \in \cal{AR}\restrictedto B$, by Lemma \ref{lem:combinatorial.forcing.basics}(2) and (3), $B$ decides $b$ as well. Note that if $b \notin \cal{AR}\restrictedto B$, then $[b,B] = \emptyset$, so $B$ trivially decides $b$.
\end{proof}

\begin{lemma}
\label{lem:one-point.reject.reduction}
    If $A$ rejects $a$, then there exists some $B \in \H\restrictedto[a,A]$ which rejects every $b \in r_{\lh(a)+1}[a,A]$.
\end{lemma}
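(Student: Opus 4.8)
The plan is to derive the one-step rejection statement from a single decisive reduction followed by one application of the pigeonhole axiom \textbf{A4} mod $\H$: I would two-colour the immediate successors of $a$ according to whether they are accepted or rejected, homogenise this colouring on a reduct, and then use the hypothesis that $A$ rejects $a$ to discard the ``accept'' side of the resulting dichotomy. The reduct surviving this elimination will reject every immediate successor of $a$ lying within it.

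First I would invoke Lemma \ref{lem:decisive.reduction} to fix some $B_0 \in \H\restrictedto[a,A]$ that decides every $b \in \cal{AR}\restrictedto[a,A]$; in particular $B_0$ decides every immediate successor. Then I would define the colouring
\[
    \O := \{b \in \cal{AR}_{\lh(a)+1} : a \sqsubseteq b \text{ and } B_0 \text{ accepts } b\},
\]
and apply \textbf{A4} mod $\H$ (clause (3) of Definition \ref{def:coideal}) to $B_0$, $a$, and $\O$, obtaining some $B \in [\depth_{B_0}(a),B_0] \cap \H$ with $r_{\lh(a)+1}[a,B] \subseteq \O$ or $r_{\lh(a)+1}[a,B] \subseteq \O^c$. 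Because $B \in [\depth_{B_0}(a),B_0]$, the approximation $a$ remains an initial segment of $B$, so $a \in \cal{AR}\restrictedto B$ and hence $B \in \H\restrictedto[a,A]$ — this is the reduction we seek.

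It then remains to eliminate the first alternative. Suppose $r_{\lh(a)+1}[a,B] \subseteq \O$. Then $B_0$ accepts every $b \in r_{\lh(a)+1}[a,B]$, and since $B \leq B_0$, Lemma \ref{lem:combinatorial.forcing.basics}(2) gives that $B$ accepts every such $b$; Lemma \ref{lem:combinatorial.forcing.basics}(7) then forces $B$ to accept $a$. But $B \in \H\restrictedto[a,A]$ while $A$ rejects $a$, so by the definition of rejection $B$ cannot accept $a$, a contradiction. Hence $r_{\lh(a)+1}[a,B] \subseteq \O^c$, so for each $b \in r_{\lh(a)+1}[a,B]$ the reduct $B_0$ does not accept $b$; as $B_0$ decides $b$, it rejects $b$, and Lemma \ref{lem:combinatorial.forcing.basics}(3) (using $b \in \cal{AR}\restrictedto B$) transfers this to $B$, so $B$ rejects every immediate successor of $a$ within it.

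The step I expect to demand the most care is the bookkeeping of the accept/reject status across the nested reducts $B \leq B_0 \leq A$: acceptance must be pushed downward via Lemma \ref{lem:combinatorial.forcing.basics}(2) and rejection via Lemma \ref{lem:combinatorial.forcing.basics}(3), and the latter is only legitimate because the relevant successors range over $r_{\lh(a)+1}[a,B]$ — that is, those lying in $\cal{AR}\restrictedto B$ — rather than over the larger set $r_{\lh(a)+1}[a,A]$, whose members need not belong to $\cal{AR}\restrictedto B$ and therefore cannot even be spoken of as ``rejected by $B$''. One should also verify that the depth condition $B \in [\depth_{B_0}(a),B_0]$ supplied by \textbf{A4} mod $\H$ is exactly what keeps $a \sqsubseteq B$, ensuring $B$ is a legitimate witness in $\H\restrictedto[a,A]$.
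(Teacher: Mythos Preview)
Your proof is correct and is essentially the same as the paper's: both pass to a decisive reduct via Lemma~\ref{lem:decisive.reduction}, two-colour the one-step extensions, apply \textbf{A4} mod $\H$, and use Lemma~\ref{lem:combinatorial.forcing.basics}(7) together with the hypothesis that $A$ rejects $a$ to rule out the homogeneous-accept side. The only cosmetic difference is that you colour by ``$B_0$ accepts'' while the paper colours by ``$A'$ rejects'', so the two alternatives are swapped. Your closing observation---that one really obtains rejection only for $b \in r_{\lh(a)+1}[a,B]$ rather than for all $b \in r_{\lh(a)+1}[a,A]$---is accurate and applies equally to the paper's argument; this is all that is needed downstream in Lemma~\ref{lem:completely.reject.reduction}.
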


\begin{proof}
    By Lemma \ref{lem:decisive.reduction}, we may let $A' \in \H[a,A]$ decide every $b \in \cal{AR}\restrictedto[a,A]$. Let:
    \begin{align*}
        \O := \{b \in r_{\lh(a)+1}[a,A] : A' \text{ rejects } b\}.
    \end{align*}
    By \textbf{A4} mod $\H$, there exists some $B \in \H\restrictedto[a,A]$ such that $r_{\lh(a)+1}[a,B] \subseteq \O$ or $r_{\lh(a)+1}[a,B] \subseteq \O^c$. It remains to show that the latter case is impossible. Since $A'$ decides every $b \in \cal{AR}\restrictedto A$, so does $B$. Therefore, if $r_{\lh(a)+1}[a,B] \subseteq \O^c$ then $B$ accepts every $b \in r_{\lh(a)+1}[a,B]$. By Lemma \ref{lem:combinatorial.forcing.basics}(7), this implies that $B$ accepts $a$, contradicting that $A$ rejects $a$.
\end{proof}

\begin{lemma}
\label{lem:completely.reject.reduction}
    If $A$ rejects $a$, then there exists some $B \in \H\restrictedto[a,A]$ which rejects every $b \in \cal{AR}\restrictedto[a,A]$. 
\end{lemma}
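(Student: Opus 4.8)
The plan is to produce the desired $B$ by a single application of semiselectivity to a carefully chosen dense open family, and then to verify by induction on length that $B$ rejects everything below $a$. The conceptual point to keep in mind is that rejection is \emph{not} in general inherited by end-extensions (a node $b \sqsupseteq a$ may be accepted even though $A$ rejects $a$), so rejection cannot be propagated for free; instead we must \emph{force} rejection of the one-step successors at the level of the dense open family, using Lemma~\ref{lem:one-point.reject.reduction}, and then let the induction carry it upward.

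For each $b \in \cal{AR}\restrictedto[a,A]$ I would set
\[
\D_b := \{C \in \H\restrictedto[b,A] : C \text{ decides } b, \text{ and whenever } C \text{ rejects } b, \ C \text{ rejects every } c \in r_{\lh(b)+1}[b,C]\}.
\]
First I would check that $\vec{\D} = \{\D_b\}_{b \in \cal{AR}\restrictedto[a,A]}$ is dense open below $[a,A]$ in $\H$. Downward-closure of each $\D_b$ follows from Lemma~\ref{lem:combinatorial.forcing.basics}(2),(3): passing to $C'' \leq C$ preserves acceptance of $b$, preserves rejection of $b$, and the one-step successors of $b$ in $C''$ form a subset of those in $C$, each already rejected and hence still rejected by $C''$. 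For density, given $B \in \H\restrictedto[b,A]$, I would use Lemma~\ref{lem:decisive.reduction} to obtain $C' \in \H\restrictedto[b,B]$ deciding $b$; if $C'$ accepts $b$ the conditional clause is vacuous so $C' \in \D_b$, and if $C'$ rejects $b$ then Lemma~\ref{lem:one-point.reject.reduction} supplies $C \in \H\restrictedto[b,C']$ rejecting every one-step successor of $b$, whence $C \in \D_b$.

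By semiselectivity there is $B \in \H\restrictedto[a,A]$ diagonalising $\vec{\D}$: for each $b$ there is $A_b \in \D_b$ with $[b,B] \subseteq [b,A_b]$. It remains to show $B$ rejects every $b \in \cal{AR}\restrictedto[a,B]$, which I would prove by induction on $\lh(b)$. For the base case $b = a$, since $A$ rejects $a$ and $B \in \H\restrictedto[a,A]$, Lemma~\ref{lem:combinatorial.forcing.basics}(3) gives that $B$ rejects $a$. For the inductive step, suppose $B$ rejects $b$; I want $B$ to reject each $c \in r_{\lh(b)+1}[b,B]$. Since $B \leq A_b$, if $A_b$ accepted $b$ then $B$ would accept $b$ by Lemma~\ref{lem:combinatorial.forcing.basics}(2), contradicting that $B$ rejects $b$; as $A_b$ decides $b$, it must in fact reject $b$, and hence by membership in $\D_b$ it rejects every one-step successor of $b$ in $A_b$. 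Because $[b,B] \subseteq [b,A_b]$, each $c \in r_{\lh(b)+1}[b,B]$ lies in $r_{\lh(b)+1}[b,A_b]$ and is rejected by $A_b$, so by Lemma~\ref{lem:combinatorial.forcing.basics}(3) it is rejected by $B$. Running this induction from $a$ upward shows $B$ rejects every $b \in \cal{AR}\restrictedto[a,B]$, as required.

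I expect the main obstacle to be organising the dense open family so that density and downward-closure hold simultaneously: the definition must tolerate the possibility that some $b$ is accepted rather than rejected (otherwise density fails at accepted nodes), while still guaranteeing that whenever rejection is the verdict, all one-step successors are rejected. Once $\D_b$ is set up correctly and its two defining clauses are balanced against the two cases produced by decisive reduction, the remaining work is the routine length-induction described above.
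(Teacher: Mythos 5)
Your proof is correct and takes essentially the same route as the paper's: the same dense open family (elements that accept $b$ or reject all of its one-step extensions, with density supplied by Lemma \ref{lem:decisive.reduction} and Lemma \ref{lem:one-point.reject.reduction}), the same semiselectivity diagonalisation, and the same induction on $\lh(b)$. The only differences are cosmetic --- your $\D_b$ carries an explicit ``decides $b$'' clause, and your final appeal to Lemma \ref{lem:combinatorial.forcing.basics}(3) technically presupposes $B \leq A_b$ when the diagonalisation only gives $[b,B] \subseteq [b,A_b]$, but the containment argument closes this exactly as in the paper's own proof.
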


\begin{proof}
    Again, by Lemma \ref{lem:decisive.reduction} we may let $A' \in \H\restrictedto[a,A]$ decide every $b \in \cal{AR}\restrictedto[a,A]$. For each $b \in \cal{AR}\restrictedto[b,A]$, we let:
    \begin{align*}
        \D_b := \{B \in \H\restrictedto[b,A] : \text{$B$ accepts $b$ or $B$ rejects every $b' \in r_{\lh(b)+1}[b,A]$}\}.
    \end{align*}
    By Lemma \ref{lem:one-point.reject.reduction}, $\D_b$ is dense open in $\H\restrictedto[a,A]$, so by the semiselectivity property of $\H$ there exists some $B \in \H\restrictedto[a,A]$ which diagonalises $\{\D_b\}_{b\in\cal{AR}\restrictedto[a,A]}$. We shall induct on $\lh(b)$ and show that $B$ rejects every $b \in \cal{AR}\restrictedto[a,B]$ (if $b \in \cal{AR}\restrictedto[a,A]$ but $b \notin \cal{AR}\restrictedto[a,B]$, then $B$ trivially rejects $b$).

    For the base case $\lh(b) = \lh(a)$ (i.e. $b = a$), it follows from that $A$ rejects $a$, and that $B \in \H\restrictedto[a,A]$ (cf. Lemma \ref{lem:combinatorial.forcing.basics}(3)). If $B$ rejects some $b \in \cal{AR}\restrictedto[a,A]$ where $\lh(b) = n \geq \lh(a)$, then since $[b,B] \subseteq [b,A_b]$ for some $A_b \in \D_b$, we have that $B$ rejects every $b' \in r_{\lh(a)+1}[b,B]$, completing the induction step. Therefore, $B$ rejects every $b \in \cal{AR}\restrictedto[a,B]$.
\end{proof}

The abstract combinatorial forcing allows us to prove the abstract local Ellentuck theorem --- see Theorem 3.12 of \cite{DMN15}. We do not require the full strength of this result in this paper. Instead, we give an outline of the proof if $\H$ is semiselective, then every metrically open set $\X \subseteq \cal{R}$ is $\H$-Ramsey.

\begin{definition}
    A set $\X \subseteq \cal{R}$ is \emph{$\H$-Ramsey} if for all $A \in \cal{R}$ and $a \in \cal{AR}\restrictedto A$, there exists some $B \in [a,A]$ such that $[a,B] \subseteq \X$ or $[a,B] \subseteq \X^c$.
\end{definition}

\begin{proposition}
\label{prop:open.is.H-ramsey}
    Let $\H \subseteq \cal{R}$ be a semiselective coideal. If $\X \subseteq \cal{R}$ is metrically open, then $\X$ is $\H$-Ramsey.
\end{proposition}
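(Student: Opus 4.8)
The plan is to use the abstract combinatorial forcing developed above, run on the open set $\X$ itself. Fix $A \in \cal{R}$ and $a \in \cal{AR}\restrictedto A$; I need to produce $B \in [a,A]$ with $[a,B] \subseteq \X$ or $[a,B] \subseteq \X^c$. The first step is to apply Lemma \ref{lem:decisive.reduction} to obtain some $A' \in \H\restrictedto[a,A]$ that decides every $b \in \cal{AR}\restrictedto[a,A]$. Now I split into two cases according to whether $A'$ accepts or rejects $a$.

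\textbf{The accepting case.} If $A'$ accepts $a$, then by definition $[a,A'] \subseteq \X$, so I may take $B := A'$ and the first alternative of $\H$-Ramseyness holds directly. (Note $A' \in [a,A]$ since $A' \in \H\restrictedto[a,A]$ forces $a \sqsubseteq r_n(A')$ for the appropriate $n$, so $A' \in [a,A]$ as required by the definition of $\H$-Ramsey.)

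\textbf{The rejecting case.} If $A'$ rejects $a$, I apply Lemma \ref{lem:completely.reject.reduction} to get some $B \in \H\restrictedto[a,A']$ which rejects \emph{every} $b \in \cal{AR}\restrictedto[a,B]$. I claim $[a,B] \subseteq \X^c$. Suppose not, so there is some $C \in [a,B]$ with $C \in \X$. Here is where metric openness of $\X$ enters: since $\X$ is metrically open and $C \in \X$, there is a basic open neighbourhood of $C$ contained in $\X$, which means there is some $n$ with $\lh(r_n(C)) \geq \lh(a)$ such that every $C' \in \cal{R}$ agreeing with $C$ on $r_n(C)$ lies in $\X$ — equivalently, $[r_n(C),C] \subseteq \X$. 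Setting $b := r_n(C)$, we have $b \in \cal{AR}\restrictedto[a,B]$ and $C$ witnesses that $C$ accepts $b$ (indeed $[b,C] \subseteq \X$), so some element of $\H\restrictedto[b,B]$ accepts $b$, contradicting that $B$ rejects $b$.

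\textbf{The main obstacle.} The crux is translating metric openness of $\X$ into the statement ``$C \in \X$ implies $[r_n(C),C] \subseteq \X$ for some $n$,'' i.e. identifying a basic metric-open neighbourhood of $C$ with an Ellentuck-style cylinder $[r_n(C),C]$. This uses the identification of $\cal{R}$ as a metrically closed subset of $\cal{AR}^\N$ via $A \mapsto (r_n(A))_{n<\omega}$ (from the closed-triple setup and Axiom \textbf{A1}): a metric-open set is a union of cylinders determined by finite initial segments $r_n(C)$, and the set of $C'$ sharing the initial segment $r_n(C)$ and lying below $C$ is exactly $[r_n(C), C]$. Once this dictionary is in place, the rejection machinery of Lemmas \ref{lem:completely.reject.reduction} and \ref{lem:one-point.reject.reduction} does the rest, and no further appeal to semiselectivity beyond what those lemmas already consumed is needed.
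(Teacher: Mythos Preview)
Your approach is the same as the paper's, but there is a small logical slip in the rejecting case. You correctly extract from metric openness that every $C' \in \cal{R}$ with $r_n(C') = r_n(C)$ lies in $\X$; this is the full metric cylinder $[b] := \{D \in \cal{R} : b \sqsubseteq D\} \subseteq \X$. You then write ``equivalently, $[r_n(C),C] \subseteq \X$'' and conclude that $C$ accepts $b$, hence ``some element of $\H\restrictedto[b,B]$ accepts $b$.'' The problem is that $C$ is just an arbitrary element of $[a,B] \subseteq \cal{R}$ and need not lie in $\H$, so $C$ accepting $b$ does not contradict $B$ rejecting $b$ (rejection only quantifies over $\H\restrictedto[b,B]$).

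The fix is immediate and is exactly what the paper does: use the stronger fact you already have. From $[b] \subseteq \X$ you get $[b,B] \subseteq [b] \subseteq \X$, so $B$ itself accepts $b$; since $B \in \H$ and $B \in [b,B]$, this contradicts $B$ rejecting $b$. (Also, your preliminary invocation of Lemma~\ref{lem:decisive.reduction} is harmless but unnecessary: the paper simply splits on whether $A$ rejects $a$, and if not, the definition of rejection already hands you an accepting $B \in \H\restrictedto[a,A]$.)
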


\begin{proof}
    Let $A \in \H$ and let $a \in \cal{AR}\restrictedto A$. We consider combinatorial forcing w.r.t. $\X$. If $A$ does not reject $a$, then $[a,B] \subseteq \X$ for some $B \in \H\restrictedto[a,A]$, so we're done. Otherwise, $A$ rejects $a$, so by Lemma \ref{lem:completely.reject.reduction} there exists some $B \in \H\restrictedto[a,A]$ such that $B$ rejects every $b \in \cal{AR}\restrictedto[a,A]$. We claim that $[a,B] \subseteq \X^c$. Otherwise, we may let $C \in [a,B] \cap \X$. Since $\X$ is metrically open, there exists some $b \in \cal{AR}\restrictedto[a,B]$ such that $C \in [b] \subseteq \X$, where $[b] := \{D \in \cal{R} : b \sqsubseteq D\}$. In particular, we have that $[b,B] \subseteq \X$, contradicting that $B$ rejects $b$.
\end{proof}

\subsection{Abstract Mathias forcing}
The abstract Mathias forcing localised to a coideal $\H$ is defined as the forcing poset $(\mathbb{M}_\H,\leq)$, where:
\begin{align*}
    \mathbb{M}_\H = \{(a,A) : A \in \cal{R} \text{ and } a \in \cal{AR}\restrictedto A\},
\end{align*}
and that $(b,B) \leq (a,A)$ iff $[b,B] \subseteq [a,A]$. We also write $(b,B) \leq_0 (a,A)$ if $(b,B) \leq (a,A)$ and $b = a$. 

Recall that given a closed triple $(\cal{R},\leq,r)$ satisfying \textbf{A1}-\textbf{A4}, $\cal{R}$ is a closed subset of the product space $\cal{AR}^\N$, where $\cal{AR}$ is equipped with the discrete topology. Thus, there exists some absolute formula $\varphi$ such that:
\begin{align*}
    \cal{R} = \{A \in \cal{AR}^\N : \varphi(A)\}.
\end{align*}
Therefore, for any model $M$, we may write:
\begin{align*}
    \cal{R}^M := \{A \in \cal{AR}^\N : \varphi^M(A)\}.
\end{align*}

\begin{definition}
    An element $A \in \cal{R}$ is said to be $\mathbb{M}_\H$-generic (over a ground model $\V$) if:
    \begin{align*}
        G_A := \{(a,B) \in \mathbb{M}_\H : A \in [a,B]\}
    \end{align*}
    is a generic filter of $\mathbb{M}_\H$ (over $\V$).
\end{definition}

\begin{lemma}
    Let $\mathbb{M}_\H$ be the Mathias forcing localised to a coideal $\H$.
    \begin{enumerate}
        \item If $G$ is a generic filter of $\mathbb{M}_\H$, then there exists a unique $\mathbb{M}_\H$-generic $A_G \in \cal{R}$ such that $G = \{(a,A) \in \mathbb{M}_\H : A_G \in [a,A]\}$. 
        
        \item If $A \in \cal{R}$ is $M_\H$-generic, then $A_{G_A} = A$.
        
        \item If $G$ is a generic filter of $\mathbb{M}_\H$, then $G_{A_G} = G$.
    \end{enumerate}
\end{lemma}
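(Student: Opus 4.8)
The plan is to prove Part~(1) in full, since Parts~(2) and~(3) then follow immediately from the uniqueness clause. Throughout, write $G_{A_G} := \{(a,A) \in \mathbb{M}_\H : A_G \in [a,A]\}$, so that the defining equation of Part~(1) reads $G = G_{A_G}$.

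Fix a generic filter $G$ and set $D := \{a \in \cal{AR} : \exists A\,(a,A) \in G\}$. First I would show $D$ is a $\sqsubseteq$-chain. Given $(a,A),(b,B) \in G$, directedness of the filter yields a common lower bound $(c,C) \in G$, so $[c,C] \subseteq [a,A] \cap [b,B]$; since $c \in \cal{AR}\restrictedto C$ we have $\depth_C(c) < \infty$, and \textbf{A3}(1) gives $[c,C] \neq \emptyset$. Any $B^* \in [c,C]$ lies in both $[a,A]$ and $[b,B]$, so $a = r_{\lh(a)}(B^*)$ and $b = r_{\lh(b)}(B^*)$ are both approximations of $B^*$ and hence $\sqsubseteq$-comparable by \textbf{A1}(3). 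Thus $D$ is linearly ordered, and in particular contains at most one element of each length. Next I would show $D$ has elements of unbounded length via the density of $E_n := \{(a,A) \in \mathbb{M}_\H : \lh(a) \geq n\}$: given $(a,A)$, \textbf{A3}(1) produces $B \in [a,A]$, and then for $m \geq n$ one checks $(r_m(B),B) \leq (a,A)$ using $a \sqsubseteq r_m(B)$ and $B \leq A$; genericity gives $G \cap E_n \neq \emptyset$. By \textbf{A1}(3) the members of $D$ have coherent initial segments, so there is a unique $a_n \in \cal{AR}_n$ for each $n$ (the length-$n$ approximation of any sufficiently long member). I then set $A_G := (a_n)_{n<\omega} \in \cal{AR}^\N$; the witnesses in $\cal{R}$ for longer and longer stems $d \in D$ converge to $A_G$, and since $(\cal{R},\leq,r)$ is a closed triple, $A_G \in \cal{R}$ with $r_n(A_G) = a_n$.

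It remains to verify $G = G_{A_G}$. For the inclusion $G \subseteq G_{A_G}$, suppose $(a,A) \in G$; then $a = a_{\lh(a)} = r_{\lh(a)}(A_G)$, so $a$ is an approximation of $A_G$, and the only real work is to show $A_G \leq A$. For each $n$, pick $d \in D$ with $\lh(d) \geq \max(n,\lh(a))$ (so $a \sqsubseteq d$), a condition $(d,E) \in G$, and a common extension $(c,C) \in G$ of $(a,A)$ and $(d,E)$; any $B^* \in [c,C]$ satisfies $B^* \leq A$ and $r_n(B^*) = a_n = r_n(A_G)$, so $r_n(A_G) \leq_\fin r_m(A)$ for some $m$ by \textbf{A2}(2). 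As $n$ is arbitrary, \textbf{A2}(2) gives $A_G \leq A$, whence $A_G \in [a,A]$. For $G_{A_G} \subseteq G$, suppose $A_G \in [a,A]$; by genericity $G$ meets the dense set $\{q : q \leq (a,A)\} \cup \{q : q \perp (a,A)\}$, so it suffices to rule out that some $(b,B) \in G$ is incompatible with $(a,A)$. But $(b,B) \in G \subseteq G_{A_G}$ forces $A_G \in [b,B]$, so with $N := \max(\lh(a),\lh(b))$ the pair $(r_N(A_G),A_G)$ is a condition below both $(a,A)$ and $(b,B)$, contradicting incompatibility. Hence $G$ contains a condition below $(a,A)$, and upward closure of the filter gives $(a,A) \in G$.

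Uniqueness of $A_G$ is then clear: any two solutions agree on $r_n$ for all $n$, since both lie in $[b,B]$ for arbitrarily long stems $b \in D$, and so they coincide by \textbf{A1}(2); moreover $A_G$ is $\mathbb{M}_\H$-generic because $G_{A_G} = G$ is generic. Part~(2) is immediate: for $A \in \cal{R}$ with $G_A$ generic, $A$ itself witnesses the defining equation $G_A = \{(a,B) : A \in [a,B]\}$, so uniqueness forces $A_{G_A} = A$. Part~(3) is just the defining equation read for $G_{A_G}$, namely $G_{A_G} = \{(a,A) : A_G \in [a,A]\} = G$. I expect the main obstacle to be Part~(1), and within it the two delicate points are verifying $A_G \leq A$ (rather than merely that $a$ is an approximation of $A_G$), which requires the directedness-plus-\textbf{A2}(2) argument, and the incompatibility-elimination in the $G_{A_G} \subseteq G$ direction; the membership $A_G \in \cal{R}$ is where metric closedness of the triple is essential.
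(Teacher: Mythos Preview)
Your approach is essentially the paper's, only far more detailed: the paper merely observes that the sets $\D_n = \{(a,A) : \lh(a) \geq n\}$ are dense (via \textbf{A3} mod $\H$), that compatibility in the filter forces stems to be $\sqsubseteq$-comparable, and then declares ``it's easy to verify that $G = G_{A_G}$'' without writing out either inclusion. So your decomposition into ``chain of stems / unbounded length / closedness gives $A_G \in \cal{R}$ / two inclusions'' is exactly the intended one, and your treatment of $G \subseteq G_{A_G}$ via \textbf{A2}(2) is correct and is precisely the content the paper suppresses.

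There is one genuine slip in the $G_{A_G} \subseteq G$ direction. You write that $(r_N(A_G),A_G)$ is ``a condition below both $(a,A)$ and $(b,B)$'', but $A_G$ is the generic element and does not lie in the ground model $\V$, so $(r_N(A_G),A_G)$ is not a condition of $\mathbb{M}_\H$ and cannot witness compatibility there. The repair is short: you have shown $A_G \in [a,A] \cap [b,B]$ in $\V[G]$; since $[a,A] \cap [b,B]$ is a closed subset of the Polish space $\cal{AR}^\N$, its nonemptiness is a $\Sigma^1_1$ statement with parameters in $\V$, hence absolute. Thus there is some $D \in ([a,A] \cap [b,B])^\V$, and then $(r_N(D),D)$ (with $N = \max(\lh(a),\lh(b))$) is a ground-model condition below both, contradicting $(b,B) \perp (a,A)$. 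With this fix your argument goes through.
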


\begin{proof}
    We only prove (1) --- (2) follows from the uniqueness of $A_G$ in (1), and (3) follow easily from the definitions. Let $\D_n := \{(a,A) \in \mathbb{M}_\H : \lh(a) \geq n\}$. We see that $\D_n$ is dense open in $\mathbb{M}_\H$ for all $n$ --- if $(a,A) \notin \D_n$ (i.e. $\lh(a) < n$) and $b \in r_n[a,A]$, then by \textbf{A3} mod $\H$ we have that $[b,A] \cap \H \neq \emptyset$. Let $B \in [b,A] \cap \H$, and we have that $[b,B] \subseteq [a,A]$, so $(b,B) \leq (a,A)$ and $(b,B) \in \D_n$. Therefore, $G \cap \D_n \neq \emptyset$ for all $n$, and since $G$ is a filter, for all $(a,A),(b,B) \in G$, $a \sqsubseteq b$ or $b \sqsubseteq a$. Thus, there exists a unique $A_G \in \cal{R}$ such that for all $n$, $(r_n(A_G),A) \in G$ for some $A$. It's easy to verify that $G = G_{A_G}$.
\end{proof} 

\begin{definition}
    Let $(\cal{R},\leq,r)$ be a closed triple satisfying \textbf{A1}-\textbf{A4}, and let $\H \subseteq \cal{R}$ be a coideal.
    \begin{enumerate}
        \item $\mathbb{M}_\H$ satisfies the \emph{Mathias property} if whenever $A \in \cal{R}$ is $\mathbb{M}_\H$-generic over a ground model $\V$ and $B \leq A$, $B$ is also $\mathbb{M}_\H$-generic over $\V$.
        
        \item $\mathbb{M}_\H$ satisfies the \emph{Prikry property} if whenever $\varphi$ is a sentence in the forcing language, then we have that for all $p \in \mathbb{M}_\H$, there exists some $q \leq_0 p$ such that $q \forces \varphi$ or $q \forces \neg\varphi$.
    \end{enumerate}
\end{definition}

Given a subset $\D \subseteq \mathbb{M}_\H$, we let:
\begin{align*}
    \bar{\D} &:= \bigcup_{(a,A) \in \D} [a,A].
\end{align*}
Note that $\bar{\D}$ is an Ellentuck-open subset of $\cal{R}$ in $\V$. Thus, if $\D$ is dense open in $\mathbb{M}_\H$, then for all $A \in \H$ and $a \in \cal{AR}\restrictedto A$, $A$ does not reject $a$ w.r.t. $\bar{\D}$ (otherwise, by Lemma \ref{lem:completely.reject.reduction} there exists some $B \in [a,A]$ which rejects every $b \in \cal{AR}\restrictedto[a,A]$, contradicting that $\D$ is dense in $\mathbb{M}_\H$). In other words, for all $A \in \H$ and $a \in \cal{AR}\restrictedto A$, there exists some $B \in \H\restrictedto[a,A]$ such that $[a,B] \subseteq \bar{\D}$.

\begin{notation}
    Given $a,b \in \cal{AR}$, we write:
    \begin{align*}
        b \preceq_\fin a \iff b \leq_\fin a \text{ and } \forall a' \sqsubseteq a \,(a' \neq a \to b \not\leq_\fin a').
    \end{align*}
\end{notation}

\begin{proposition}
\label{prop:semiselective.implies.Mathias}
    If $\H$ is semiselective, then $\mathbb{M}_\H$ has the Mathias property.
\end{proposition}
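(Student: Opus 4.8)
The plan is to recast $\mathbb{M}_\H$-genericity as a statement about the Ellentuck-open set $\bar{\D}$ attached to a dense open $\D$, and then feed it into the combinatorial forcing of the previous subsection. Fix a ground model $\V$, let $A\in\cal{R}$ be $\mathbb{M}_\H$-generic over $\V$, and let $B\le A$. To see that $B$ is generic it suffices to check that $G_B$ meets every dense open $\D\subseteq\mathbb{M}_\H$ lying in $\V$; since $(a,C)\in G_B$ means exactly $B\in[a,C]$, we have $G_B\cap\D\neq\emptyset$ if and only if $B\in\bar{\D}$. Thus the whole proposition reduces to the assertion that, for every dense open $\D\in\V$, every reduction $B\le A$ of the generic lies in $\bar{\D}$.

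First I would show that the ``accepting'' conditions are dense. Running combinatorial forcing against $\X:=\bar{\D}$, the remark preceding the proposition---which is the contrapositive of Lemma \ref{lem:completely.reject.reduction}, using that $\D$ is dense---says that no $C\in\H$ rejects any $a\in\cal{AR}\restrictedto C$. Consequently, for every $(a,C)\in\mathbb{M}_\H$ there is $C'\in\H\restrictedto[a,C]$ with $[a,C']\subseteq\bar{\D}$, so that $(a,C')\le_0(a,C)$ belongs to $\D^\flat:=\{(b,D):[b,D]\subseteq\bar{\D}\}$; as $\D^\flat$ is visibly $\le$-downward closed, it is dense open. Applying Lemma \ref{lem:decisive.reduction} in the same spirit---every $b$ is \emph{decided}, and by non-rejection ``decided'' means ``accepted''---upgrades this to the dense open set of conditions $(a,C)$ for which $C$ accepts \emph{every} $b\in\cal{AR}\restrictedto[a,C]$. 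Since $A$ is generic, $G_A$ meets these sets; because they are open and $(r_{\lh(a)}(A),A)\le(a,C)$ holds for every $(a,C)\in G_A$, I may pull the membership back to $A$ itself, obtaining an $m<\omega$ such that $A$ accepts $r_m(A)$, and in fact accepts every $b\in\cal{AR}\restrictedto[r_m(A),A]$.

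The crux---and the step I expect to be the main obstacle---is to pass from acceptance of the initial segment $a^*:=r_m(A)$ of $A$ to the conclusion $B\in\bar{\D}$ for an \emph{arbitrary} reduction $B\le A$, whose own finite approximations need not extend $a^*$ at all. The natural argument phrases acceptance in terms of the almost-reduction order and simply chains $B\le^* A\le^*(\text{an accepting condition})$ using transitivity of $\le^*$; this is precisely the shortcut taken in \cite{DMN15}, and it is unavailable here since $\le^*$ need not be transitive. Instead I would work entirely with the finite order $\le_\fin$ and the depth function: choosing $N$ with $\depth_A(r_N(B))>\depth_A(a^*)$ and using \textbf{A2} and \textbf{A3} to splice $a^*$ with a sufficiently deep tail of $B$, one realises an element of $[a^*,A]\subseteq\bar{\D}$ that shares a tail with $B$, and then transports the resulting finite $\D$-witness back onto $B$ itself via the openness of $\bar{\D}$ together with \textbf{A3}. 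Carrying out this realignment cleanly---so that only \textbf{A1}--\textbf{A4} and the semiselective combinatorial forcing lemmas are invoked, with no appeal to $\le^*$-transitivity---is the entire technical content, and it is where the hypothesis of semiselectivity (rather than full selectivity) is exactly what is needed.
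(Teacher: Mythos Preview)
Your reduction to showing $B\in\bar{\D}$ and your density argument for accepting conditions are fine, and you correctly isolate the genuine difficulty: the finite approximations $r_n(B)$ of an arbitrary reduction $B\le A$ need not extend $a^*=r_m(A)$, so the fact that $C$ accepts every $b\in\cal{AR}\restrictedto[a^*,C]$ says nothing directly about $B$.

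The gap is in your proposed ``splice and transport''. Suppose you do manage to splice and produce some $B'\in[a^*,A]$ sharing a tail with $B$; you then know $B'\in\bar{\D}$, so $B'\in[c',C']$ for some $(c',C')\in\D$. But $c'$ is an initial segment of $B'$, and once $\lh(c')\ge\lh(a^*)$ it extends $a^*$, which is \emph{not} an initial segment of $B$. Openness of $\bar{\D}$ and \textbf{A3} let you shrink or realign witnesses that already contain $B$; they give you no mechanism for converting a witness through $c'\sqsubseteq B'$ into one through some $d\sqsubseteq B$. Concretely, in $[\omega]^\omega$ with $a^*=\{0,1,2\}$ and $B=\{5,7,9,\dots\}$, the splice $B'=\{0,1,2,5,7,9,\dots\}$ lies in $\bar{\D}$, but any witness $(c',C')$ with $c'\supseteq a^*$ is useless for $B$, and nothing you have arranged forces a witness with $c'\subseteq a^*$. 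So the transport step, as stated, does not go through.

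The paper closes exactly this gap by strengthening what is accepted: rather than asking $C$ to accept only $b$'s \emph{extending} $a$, it defines $\E_c$ to consist of those $C$ with $[d,C]\subseteq\bar{\D}$ for every $d\preceq_\fin c$ (i.e.\ every $d\le_\fin c$ of maximal depth), and then diagonalises the family $\{\E_c\}_{c\in\cal{AR}\restrictedto[a,A]}$ using semiselectivity. The point is that for $B\le A_G$ one has $r_n(B)\preceq_\fin r_m(A_G)$ for suitable $n,m$, so $r_n(B)$ is one of the $d$'s that has already been accepted, yielding $B\in[r_n(B),A_G]\subseteq\bar{\D}$ directly, with no splicing needed. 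Replacing your ``accept every extension of $a^*$'' by ``accept every $d\preceq_\fin c$ for every extension $c$ of $a^*$'' is the missing idea.
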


\begin{proof}
    Let $A_G$ be an $\mathbb{M}_\H$-generic element of $\cal{R}$ (over some ground model $\V$), and let $D \leq A_G$. We need to show that the filter: 
    \begin{align*}
        H := \{(a,A) \in \mathbb{M}_\H : D \in [a,A]\}
    \end{align*}
    is generic. Let $\D \subseteq \mathbb{M}_\H$ be a dense open subset (in the ground model $\V$). For each $a \in \cal{AR}$, we let:
    \begin{align*}
        \E_a := \{B \in \H : a \in \cal{AR}\restrictedto B \text{ and } \forall b \preceq_\fin a \, [b,B] \subseteq \bar{\D}\}.
    \end{align*} 

    \begin{claim}
        For all $A \in \H$ such that $a \in \cal{AR}\restrictedto A$, $\E_a$ is dense open in $\H\restrictedto[a,A]$.
    \end{claim}

    \begin{midproof}
        It's clear that $\E_a$ is open, so we shall show that it is dense in $\H\restrictedto[a,A]$. By the comment above, let $A' \in \H\restrictedto[a,A]$ such that $[a,A'] \subseteq \bar{\D}$. Let $\{b_i : 1 \leq i \leq N\}$ be an enumeration of all $b \preceq_\fin a$. Let $A_0 := A'$, and suppose that we have defined $A_i \in [a,A']$. We have that $b_{i+1} \in \cal{AR}\restrictedto A_i$ (as $b_{i+1} \leq_\fin a \sqsubseteq A_i$), so by the observation stated prior to the claim there exists some $A_{i+1}' \in \H\restrictedto[b_{i+1},A_i]$ which accepts $b_{i+1},A_i$ w.r.t. $\bar{\D}$, i.e. $[b_{i+1},A_{i+1}'] \subseteq \bar{\D}$. By \textbf{A3} mod $\H$, there exists some $A_{i+1} \in \H\restrictedto[a,A]$ such that $[a_{i+1},A_{i+1}] \subseteq [a_{i+1},A_{i+1}']$. This completes the induction, and $(a,A_N) \in \D$, as desired. 
    \end{midproof}

    Fix any $(a,A) \in G$. Since $\H$ is semiselective, for all $(b,B) \leq (a,A)$ in $\mathbb{M}_\H$, there exists some $C \in \H\restrictedto[b,B]$ which diagonalises $\vec{\E}_b := \{\E_c\}_{c\in\cal{AR}\restrictedto[b,A]}$ below $A$. That is, the set:
    \begin{align*}
        \bar{\D} := \{(b,B) \in \mathbb{M}_\H : \text{$B$ diagonalises $\vec{\E}_b$ below $A$}\}
    \end{align*}
    is dense open in $\mathbb{M}_\H$. Since $G$ is generic, we may let $(b,B) \in G \cap \D'$. Then $A_G \in [b,B]$, and for all $c \in \cal{AR}\restrictedto[b,A_G]$ and $d \preceq_\fin c$, $[d,A_G] \subseteq [d,B] \subseteq \bar{\D}$. Let $n$ be the least integer such that $\depth_{A_G}(r_n(D)) =: m \geq \lh(b)$. Then $r_n(D) \preceq_\fin r_m(A_G) \in \cal{AR}\restrictedto[b,B]$, so $D \in [r_n(D),A_G] \subseteq [r_n(D),B] \subseteq \bar{\D}$. This implies that $(r_n(D),B) \in H \cap \D$, and since $\D$ is arbitrary, $H$ is a generic filter as required.
\end{proof}

\begin{proposition}
\label{prop:semiselective.implies.Prikry}
    If $\H$ is semiselective, then $\mathbb{M}_\H$ has the Prikry property.
\end{proposition}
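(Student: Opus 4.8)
The plan is to run the combinatorial forcing of the previous subsection with respect to the set
\[
\X := \bigcup\{[b,B] : (b,B) \in \mathbb{M}_\H \text{ and } (b,B) \forces \varphi\},
\]
which is a genuine subset of $\cal{R}$ in $\V$ (it is definable from the forcing relation), so that Lemmas \ref{lem:combinatorial.forcing.basics}--\ref{lem:completely.reject.reduction} apply to it even though $\X$ need not be metrically open. Fix a condition $(a,A) \in \mathbb{M}_\H$; the goal is a pure extension $(a,B) \leq_0 (a,A)$ deciding $\varphi$. The bridge between this combinatorial forcing and the forcing relation is that, for any $\mathbb{M}_\H$-generic $C$, one has $C \in \X$ iff $\V[C] \models \varphi$: indeed $C \in \X$ means $C \in [b,B]$ for some condition with $(b,B) \forces \varphi$, equivalently $(b,B) \in G_C$ for some such condition, which by the forcing theorem is equivalent to $\V[C] \models \varphi$. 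Combined with the correspondence $(a,B) \in G \iff A_G \in [a,B]$ from the generic-real lemma preceding Proposition \ref{prop:semiselective.implies.Mathias}, this yields the key translation
\[
[a,B] \subseteq \X \implies (a,B) \forces \varphi,
\]
since every generic $C \in [a,B]$ then lies in $\X$ and hence satisfies $\V[C] \models \varphi$.

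First I would apply Lemma \ref{lem:decisive.reduction} to obtain $B \in \H\restrictedto[a,A]$ deciding every $b \in \cal{AR}\restrictedto[a,A]$; in particular $B$ decides $a$ and $(a,B) \leq_0 (a,A)$, so I split into cases according to whether $B$ accepts or rejects $a$. If $B$ accepts $a$, then $[a,B] \subseteq \X$, and the translation above gives $(a,B) \forces \varphi$, so $(a,B)$ is the desired deciding pure extension. If instead $B$ rejects $a$, I would apply Lemma \ref{lem:completely.reject.reduction} to get $B^* \in \H\restrictedto[a,B]$ rejecting every $b \in \cal{AR}\restrictedto[a,B]$, and I claim $(a,B^*) \forces \neg\varphi$, which again yields a deciding pure extension since $(a,B^*) \leq_0 (a,A)$.

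The remaining and most delicate step is to verify this last claim, which is exactly where the definition of $\X$ interacts with complete rejection. Suppose toward a contradiction that some $(b,B') \leq (a,B^*)$ forces $\varphi$. Then $[b,B'] \subseteq \X$ by the very definition of $\X$, and $[b,B'] \neq \emptyset$; using \textbf{A3} mod $\H$ I would choose $B'' \in \H \cap [b,B']$, so that $[b,B''] \subseteq [b,B'] \subseteq \X$. Thus $B'' \in \H\restrictedto[b,B^*]$ accepts $b$, contradicting that $B^*$ rejects $b$. Hence no extension of $(a,B^*)$ forces $\varphi$, i.e. $(a,B^*) \forces \neg\varphi$. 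I expect the main obstacle throughout to be the bookkeeping in passing between the purely combinatorial notions of acceptance and rejection (phrased in terms of the fixed set $\X$) and the forcing relation, and in particular tracking stems so that the deciding condition produced in each case is a genuine pure extension $\leq_0 (a,A)$; semiselectivity of $\H$ enters only through the reduction Lemmas \ref{lem:decisive.reduction} and \ref{lem:completely.reject.reduction}, already established.
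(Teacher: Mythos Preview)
Your proof is correct and follows essentially the same route as the paper's: your set $\X$ is exactly the paper's $\bar{\D}$ for $\D=\{q:q\forces\varphi\}$, and both arguments run combinatorial forcing against this set, splitting on accept/reject at $a$ and invoking Lemma~\ref{lem:completely.reject.reduction} in the rejecting case. The only cosmetic differences are that you justify ``$[a,B]\subseteq\X\Rightarrow(a,B)\forces\varphi$'' via generics whereas the paper appeals directly to the openness of $\D$ in $\mathbb{M}_\H$, and that your initial appeal to Lemma~\ref{lem:decisive.reduction} (deciding \emph{every} $b$) is stronger than needed---Lemma~\ref{lem:combinatorial.forcing.basics}(5) already gives a $B$ deciding $a$.
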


\begin{proof}
    Let $\varphi$ be a sentence in the forcing language. Let:
    \begin{align*}
        \D &:= \{q \in \mathbb{M}_\H : q \forces \varphi\}.
    \end{align*}
    We consider doing combinatorial forcing w.r.t. the set $\bar{\D}$. Let $(a,A) \in \mathbb{M}_\H$, and by Lemma \ref{lem:combinatorial.forcing.basics}(2) and (3) there exists some $B \in [a,A]$ which decides $a$. If $B$ accepts $a$, then $[a,B] \subseteq \bar{\D}$, so $(a,B) \in \D$ as $\D$ is open. In this case, $(a,B) \leq_0 (a,A)$ and $(a,B) \forces \varphi$. Otherwise, by Lemma \ref{lem:completely.reject.reduction} there exists some $B \in [a,A]$ which rejects every $b \in \cal{AR}\restrictedto[a,A]$. In other words, for all $(b,C) \leq (a,B)$, $(b,C) \notin \D_0$, i.e. $(b,C)$ does not force $\varphi$. But this precisely means that $(a,B) \forces \neg\varphi$, as needed.
\end{proof}

\section{Topological Ramsey space of block subspaces}
\label{sec:TRS.block.subspaces}
By Hindman's theorem, the pigeonhole principle holds for $E^{[\infty]}$ if $E$ is a vector space over $\mathbb{F}_2$. That is, for all $Y \subseteq E$ and $A \in E^{[\infty]}$, there exists some $B \leq A$ such that $\c{B} \subseteq Y$ or $\c{B} \subseteq Y^c$. Consequently, if for each $A := (x_n)_{n<\omega} \in E^{[\infty]}$ we let:
\begin{align*}
    r_n(A) := (x_0,\dots,x_{n-1}),
\end{align*}
then $(E^{[\infty]},\leq,r)$ is a closed triple satisfying \textbf{A1}-\textbf{A4}. We shall discuss the intersection between the local Ramsey theory of $(E^{[\infty]},\leq,r)$ as a topological Ramsey space, and the local Ramsey theory of block subspaces developed Smythe, followed by an application of the abstract Mathias forcing to prove Theorem \ref{thm:mathias.thm.vector.spaces}.

The symbols $<$ and $\leq$ will appear repeatedly throughout this section. To avoid confusion, the symbol $<$ shall be reserved for Notation \ref{not:notations}(5) and (6), while $\leq$ shall be reserved for the quasi-order $\leq$ in $(E^{[\infty]},\leq,r)$. 

\subsection{Semicoideals of block subspaces}
Readers may notice that there are two separate definitions for a coideal $\H \subseteq E^{[\infty]}$, i.e. Definition \ref{def:coideal.vector.spaces} and Definition \ref{def:coideal}. It turns out that these two definitions are equivalent. We remark that in this setting, if $A \in E^{[\infty]}$ and $a \in E^{[<\infty]}\restrictedto A$ (i.e. $a \in E^{[<\infty]}$ and $\c{a} \subseteq \c{A}$), we have that $\depth_A(a) = d_A(a)$ (see Notation \ref{not:notations}).

\begin{lemma}
    Let $\H \subseteq E^{[\infty]}$. The following are equivalent:
    \begin{enumerate}
        \item $\H$ is a semicoideal.
        
        \item $\H$ is $\leq$-upward closed and \textbf{A3} mod $\H$ holds.
    \end{enumerate}
\end{lemma}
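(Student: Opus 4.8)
The plan is to prove the two implications separately; the forward direction $(1)\implies(2)$ is a support-bookkeeping argument built around $\leq^*$-upward closure, while the reverse $(2)\implies(1)$ turns on a single well-chosen instance of \textbf{A3}(a). For $(1)\implies(2)$, $\leq$-upward closure is immediate, since $A \leq B$ gives $A/0 = A \leq B$, hence $A \leq^* B$, so membership transfers. For \textbf{A3}(a), given $A \in \H$, $a \in \cal{AR}\restrictedto A$ and $B \in [\depth_A(a), A] \cap \H$, I would exhibit the witness $C := a ^\frown (B/a)$. One checks $C \in [a,B]$: the hypothesis $B \in [\depth_A(a),A]$ gives $r_{d_A(a)}(B) = r_{d_A(a)}(A)$, and every $v \in \c a$ has support below $\min\supp(x_{d_A(a)})$, so by the block structure $\c a \subseteq \c{r_{d_A(a)}(A)} = \c{r_{d_A(a)}(B)} \subseteq \c B$. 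Since $B$ and $C$ share the common tail $B/a$, we have $B/a \leq C$ and thus $B \leq^* C$, whence $C \in \H$ by $\leq^*$-upward closure.

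The main obstacle is \textbf{A3}(b). Given $A \in \H$, $a \in \cal{AR}\restrictedto A$, $B \in \H\restrictedto A$ with $[a,B] \neq \emptyset$ (equivalently $\c a \subseteq \c B$), I would set $c := r_{d_A(a)}(A)$ and $A' := c ^\frown (B/c)$. That $A' \in [\depth_A(a),A] \cap \H$ follows as before: $r_{d_A(a)}(A') = c$, $A' \leq A$, and $A'$ shares the tail $B/c$ with $B$, so $B \leq^* A'$ gives $A' \in \H$. The delicate point is $[a, A'] \subseteq [a, B]$. An element $C = a ^\frown D \in [a, A']$ has its tail $D$ spanning only vectors with support strictly above $\max\supp(a)$; I must show every such vector lies in $\c{B/c}$. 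This uses the minimality of $d_A(a)$: the last vector $x_{d_A(a)-1}$ of $c$ satisfies $\min\supp(x_{d_A(a)-1}) \leq \max\supp(a)$, so any nonzero $c$-component of a vector in $\c D$ would drag its support down into $[0, \max\supp(a)]$, which is impossible. Hence $\c D \subseteq \c{B/c} \subseteq \c B$, and combined with $\c a \subseteq \c B$ this gives $C \leq B$. The degenerate case $\depth_A(a) = 0$ (i.e. $a = \emptyset$, $c = \emptyset$, $A' = B$) makes the inclusion trivial and I would dispatch it separately.

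For $(2)\implies(1)$ I first reduce to tail-closure. Since $A \leq^* B$ means $A/N \leq B$ for some $N$ and $\H$ is $\leq$-upward closed, it suffices to show $A \in \H \implies A/N \in \H$; then $A/N \leq B$ yields $B \in \H$. To prove tail-closure I apply \textbf{A3}(a) to $A = (x_n)_{n<\omega}$ with the singleton $a := (x_N)$ and with $B := A$ (which lies trivially in $[\depth_A(a), A] \cap \H$). This produces some $C \in [a, A] \cap \H$. Because $C$ begins with $x_N$ and all its remaining vectors have support above $\max\supp(x_N)$, the block structure of $A$ forces $\c C \subseteq \c{A/N}$, that is $C \leq A/N$; applying $\leq$-upward closure to $C \in \H$ then delivers $A/N \in \H$. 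It is worth flagging that this is precisely where \textbf{A3} does real work: $\leq$-upward closure alone cannot produce an element of $\H$ inside the smaller space $\c{A/N}$, and the clever choice of the singleton $a = (x_N)$ is what lets \textbf{A3}(a) manufacture such an element.
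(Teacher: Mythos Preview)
Your proof is correct. For the direction $(1)\Rightarrow(2)$ you follow essentially the same construction as the paper: the witness $a^\frown(B/a)$ for \textbf{A3}(a), and the element $c^\frown(B/c)$ with $c = r_{d_A(a)}(A)$ for \textbf{A3}(b). Your treatment of \textbf{A3}(b) is in fact more careful than the paper's: the paper writes the implication ``$D \leq C \Rightarrow D/a \leq C/a$'' without comment, whereas you spell out the support argument (using the minimality of $d_A(a)$ to rule out any $c$-component in the tail) that actually justifies it. The paper also chooses a less canonical tail (an arbitrary $M$ with $B/M \leq A/N$ rather than your $B/c$), but this makes no real difference.

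For $(2)\Rightarrow(1)$ your route is genuinely different and cleaner. The paper argues directly: assuming $[a,A] \subseteq [a,B]$ for some $a$, it uses \textbf{A3}(a) to find $C \in [a,A] \cap \H$, whence $C \leq B$ and $B \in \H$. However, the paper never explains how $A \leq^* B$ yields such an inclusion, and the obvious choice $a = r_N(A)$ (where $A/N \leq B$) does \emph{not} work in general, since one need not have $\c{r_N(A)} \subseteq \c{B}$. Your reduction to tail-closure sidesteps this entirely: once $A/N \in \H$ is established, $\leq$-upward closure alone gives $B \in \H$. The trick of applying \textbf{A3}(a) with the singleton $a = (x_N)$ and $B = A$ to manufacture an element of $\H$ below $A/N$ is a nice touch, and makes the argument self-contained.
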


\begin{proof}
    \underline{(2)$\implies$(1):} Suppose that $\H$ is $\leq$-upward closed and \textbf{A3} mod $\H$ holds. Let $A = (x_n)_{n<\omega} \in \H$ and $B \in E^{[\infty]}$ be such that $A \leq^* B$. Then there exists some $N$ such that $A/N \leq B$. Therefore, if we let $a := (x_N)$, then $a < C$ for all $C \in [a,C]$, so $[a,A] \subseteq [a,B]$. By \textbf{A3} mod $\H$ there exists some $C \in [a,A] \cap \H$. Then $C \in [a,B]$, and in particular $C \leq B$, so $B \in \H$ by $\leq$-upward closure. 
    
    \underline{(1)$\implies$(2):} Suppose that $\H$ is $\leq^*$-upward closed. Clearly, this implies that $\H$ is $\leq$-upward closed. We shall verify that \textbf{A3} mod $\H$ holds. Let $A \in \H$ and $a \in E^{[<\infty]}\restrictedto A$.
    \begin{enumerate}[label=(\alph*)]
        \item For any $B \in [d_A(a),A] \cap \H$, we have that $B \leq^* B/a$ (as $B/a \leq B/a$), so $B/a \in \H$. Since $B/a \leq a^\frown(B/a)$, we have that $a^\frown(B/a) \in \H$ by $\leq$-upward closure. But $a^\frown(B/a) \leq^* B$, so $a^\frown(B/a) \in [a,B] \cap \H$ as required.

        \item Suppose that $B \in \H\restrictedto A$ and $[a,B] \neq \emptyset$. Let $N := d_A(a)$, and let $M$ be large enough so that $B/M \leq A/N$. Then $C := r_N(A)^\frown(B/m) \in [d_A(a),A] \cap \H$ is well-defined, and:
        \begin{align*}
            D \in [a,C] &\implies a \sqsubseteq D \wedge D \leq C \\
            &\implies a \sqsubseteq D \wedge D/a \leq C/a \\
            &\implies a \sqsubseteq D \wedge D/a \leq B/M \\
            &\implies a \sqsubseteq D \wedge D/a \leq A/N \\
            &\implies D \in [a,A], 
        \end{align*}
        as desired.
    \end{enumerate}
\end{proof}

\begin{lemma}
    Let $\H \subseteq E^{[\infty]}$ be a semicoideal. Then $\H$ is a coideal (as in Definition \ref{def:coideal.vector.spaces}) iff \textbf{A4} mod $\H$ holds.
\end{lemma}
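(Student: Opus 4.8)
The plan is to prove both implications, with the reverse direction (\textbf{A4} mod $\H$ $\implies$ coideal) being immediate and the forward direction carrying all the work. Throughout I will use the identification, noted just above the statement, that here $\depth_A(a) = d_A(a)$, so that the depth of $a$ is computed from $\max(\supp(a))$.

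For \textbf{A4} mod $\H$ $\implies$ coideal: given $Y \subseteq E$ and $A \in \H$, I would apply \textbf{A4} mod $\H$ with $a := \emptyset = r_0(A)$, noting $\depth_A(\emptyset) = d_A(\emptyset) = 0$, so that $[\depth_A(\emptyset),A] \cap \H = \H\restrictedto A$ and $\cal{AR}_1$ is the set of length-one block sequences $(x)$. Taking $\O := \{(x) \in \cal{AR}_1 : x \in Y\}$, \textbf{A4} mod $\H$ produces $B \in \H\restrictedto A$ with $r_1[\emptyset,B] \subseteq \O$ or $r_1[\emptyset,B] \subseteq \O^c$; since $r_1[\emptyset,B] = \{(x) : 0 \neq x \in \c B\}$, the two cases give exactly $\c B \subseteq Y$ or $\c B \subseteq Y^c$ (modulo the zero vector, which I place on whichever side is needed, exactly as in the Hindman-style colourings the paper already uses).

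For coideal $\implies$ \textbf{A4} mod $\H$: fix $A \in \H$, $a \in \cal{AR}\restrictedto A$, and $\O \subseteq \cal{AR}_{\lh(a)+1}$. Set $d := \depth_A(a) = d_A(a)$ and $c := r_d(A)$; by definition of $d_A$, the block sequence $c$ is precisely the list of vectors of $A$ whose support lies $\leq \max(\supp(a))$. First I would observe that $A/c \in \H$: since $A/c = A/d$ is a tail of $A$ we have $A \leq^* A/c$, so the $\leq^*$-upward closure of the semicoideal $\H$ gives $A/c \in \H$. I then apply the coideal property (Definition \ref{def:coideal.vector.spaces}) to $A/c \in \H$ and the set $Y_a := \{x \in E : a^\frown x \in \O\}$, obtaining $B' \in \H\restrictedto(A/c)$ with $\c{B'} \subseteq Y_a$ or $\c{B'} \subseteq Y_a^c$. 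Finally I set $B := c^\frown B'$.

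The verification that $B$ witnesses \textbf{A4} mod $\H$ is where the single delicate point lies. That $B \in [\depth_A(a),A] \cap \H$ is routine: $c \sqsubseteq B$ and $\c B = \c c \oplus \c{B'} \subseteq \c A$ give $B \in [c,A] = [\depth_A(a),A]$, while $B' \leq B$ together with $\leq^*$-upward closure gives $B \in \H$. The main obstacle is to transfer the monochromaticity of $\c{B'}$ to $r_{\lh(a)+1}[a,B]$, and the key observation making this work is structural: every valid one-point extension of $a$ inside $B$ has the form $a^\frown x$ with $0 \neq x \in \c B$ and $\max(\supp(a)) < \min(\supp(x))$, but $\c c$ is supported entirely in coordinates $\leq \max(\supp(a))$, so the $\c c$-component of any such $x$ vanishes and hence $x \in \c{B'}$. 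Consequently $r_{\lh(a)+1}[a,B] = \{a^\frown x : 0 \neq x \in \c{B'}\}$, and since $a^\frown x \in \O$ is decided exactly by whether $x \in Y_a$, the dichotomy $\c{B'} \subseteq Y_a$ / $\c{B'} \subseteq Y_a^c$ yields $r_{\lh(a)+1}[a,B] \subseteq \O$ / $r_{\lh(a)+1}[a,B] \subseteq \O^c$ respectively (the zero vector is never a block extension, so it again plays no role). This fact, that the depth $\depth_A(a) = d_A(a)$ cuts $A$ precisely at $\max(\supp(a))$ and thereby forces the uncontrolled ``middle'' vectors of $c$ out of reach of any extension of $a$, is the crux of the argument.
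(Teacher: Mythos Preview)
Your proof is correct and follows essentially the same approach as the paper: define a colouring of vectors from $\O$, apply the coideal property to obtain a monochromatic $B'$ below the tail $A/d_A(a)$, and then concatenate to land back in the right Ellentuck neighbourhood. The paper applies the coideal property to $A$ itself and then trims (taking $B'/M \leq A/d_A(a)$), concatenating with $a$ to get $C = a^\frown B \in [a,A]$, whereas you apply it directly to $A/c$ and concatenate with $c = r_{d_A(a)}(A)$ to get $B \in [\depth_A(a),A]$; your version is slightly more careful in that it lands exactly in $[\depth_A(a),A] \cap \H$ as the statement of \textbf{A4} mod $\H$ demands, while the paper's $C$ is in $[a,A] \cap \H$ and one would need an extra appeal to \textbf{A3} mod $\H$ to pass to $[\depth_A(a),A]$.
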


\begin{proof}
    Definition \ref{def:coideal.vector.spaces} is exactly the statement \textbf{A4} mod $\H$ for $a = \emptyset$, so it suffices to show that the converse holds. Let $A \in E^{[\infty]}$, $a \in E^{[<\infty]}\restrictedto A$, and let $\O \subseteq E^{[\lh(a)+1]}$. Define:
    \begin{align*}
        Y := \{x \in E : a < x \wedge a^\frown x \in \O\}.
    \end{align*}
    Since $\H$ is a coideal (as in Definition \ref{def:coideal.vector.spaces}), there exists some $B' \leq A$ such that $\c{B'} \subseteq Y$ or $\c{B'} \subseteq Y^c$. Let $M$ be large enough so that $B'/M \leq A/d_A(a)$, and note that $a < B'/M$. Let $B := B'/M$, and let $C := a^\frown B \in [a,A]$. Observe that:
    \begin{align*}
        b \in r_{\lh(a)+1}[a,C] \iff b = a^\frown x \text{ for some $x \in \c{B}$}.
    \end{align*}
    Therefore, if $\c{B} \subseteq Y$, then $r_{\lh(a)+1}[a,C] \subseteq \O$, and if $\c{B} \subseteq Y^c$, then $r_{\lh(a)+1}[a,C] \subseteq \O^c$. Thus, \textbf{A4} mod $\H$ holds.
\end{proof}

We now address the overlap in the selectivity properties. There are two natural questions:
\begin{enumerate}
    \item Is Definition \ref{def:semiselective} equivalent to the natural extension of semiselectivity of coideal of $[\omega]^\omega$ to semicoideal of block subspaces (i.e. Lemma 7.10, \cite{T10})?

    \item Is a selective (semi)coideal (as in Definition \ref{def:selective}) semiselective?
\end{enumerate}

We provide a positive answer to both of these problems.

\begin{lemma}
\label{lem:semiselectivity.equivalence}
    Let $\H \subseteq E^{[\infty]}$ be a semicoideal. The following are equivalent:
    \begin{enumerate}
        \item $\H$ is semiselective (as in Definition \ref{def:semiselective}).
        
        \item If $\{\D_n\}_{n<\omega}$ is a collection of dense open subsets of $\H$ (under $\leq$), then for all $A \in \H$ there exists some $B \in \H\restrictedto A$ such that $B/n \in \D_{d_A(r_n(B))}$ for all $n$.
    \end{enumerate}
\end{lemma}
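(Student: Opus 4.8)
The plan is to transport the diagonalisation of Definition \ref{def:semiselective}, indexed by finite block sequences $b \in \cal{AR}\restrictedto[a,A]$, to the depth-indexed ``tail'' diagonalisation of (2), and back. For $b \in \cal{AR}\restrictedto[a,A]$, write
\[
    \D_b^- := \{T \in \H : b^\frown T \in \D_b\}.
\]
Since $\H$ is $\leq^*$-upward closed and each $\D_b$ is $\leq$-downward closed in $\H\restrictedto[b,A]$, the set $\D_b^-$ is $\leq$-downward closed, and it is dense below $A/d_A(b)$ precisely because $\vec{\D}$ is dense open below $[a,A]$. The decisive observation is a block-support computation: if $b \sqsubseteq C$ and $C \leq B$ then $\c{C} \subseteq \c{b} \cup \c{B/d_B(b)}$, so for every $A_b \in \H\restrictedto[b,A]$ we have $[b,B] \subseteq [b,A_b]$ iff $b^\frown(B/d_B(b)) \leq A_b$. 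Choosing $A_b := b^\frown(B/d_B(b))$ then shows that \emph{$B$ diagonalises $\vec{\D}$ iff $B/d_B(b) \in \D_b^-$ for every $b \in \cal{AR}\restrictedto[a,B]$}; this is the identity on which both implications rest.

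For (1)$\implies$(2), fix globally dense open sets $\{\D_n\}_{n<\omega}$ and $A \in \H$, and take $a = \emptyset$. For $b \in \cal{AR}\restrictedto A$ define
\[
    \E_b := \{C \in \H\restrictedto[b,A] : C/\lh(b) \in \D_{d_A(b)}\}.
\]
One checks directly that $\{\E_b\}$ is dense open below $[\emptyset,A]$: downward-closedness follows from that of $\D_{d_A(b)}$ together with the tail-monotonicity above, and density from the global density of $\D_{d_A(b)}$ applied to the tail $B/\lh(b)$. Semiselectivity yields $B \in \H\restrictedto A$ diagonalising $\{\E_b\}$; applying the reformulation to $b := r_n(B)$ (so that $d_B(b) = n$) gives $B \leq A_b$ with $A_b/\lh(b) \in \D_{d_A(r_n(B))}$, whence $B/n \in \D_{d_A(r_n(B))}$ by downward-closedness, as required.

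For (2)$\implies$(1), fix a dense open family $\vec{\D}$ below $[a,A]$ and pass to the tails $\D_b^-$. For each $m$ set
\[
    \E_m := \bigcap\{\D_b^- : b \in \cal{AR}\restrictedto[a,A],\ d_A(b) \leq m\},
\]
which is a \emph{finite} intersection — here I use that there are only finitely many block sequences of bounded depth (the \textbf{A2}-finiteness available when $|\mathbb{F}| = 2$) — hence dense open below $A/m$. I extend each $\E_m$ to a globally dense open subset of $\H$ by adjoining $\{T \in \H : T \perp A/m\}$, which is legitimate since incompatibility with a fixed condition is $\leq$-downward closed. Applying (2) to the tail $A/d_A(a)$ and prepending $a$ produces $B \in \H\restrictedto[a,A]$ with $B/n \in \E_{d_A(r_n(B))}$ for all $n$. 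Given $b \in \cal{AR}\restrictedto[a,B]$ with $k := d_B(b)$, condition (2) at $n = k$ gives $B/k \in \E_{d_A(r_k(B))}$; the monotonicity $\c{b} \subseteq \c{r_k(B)}$ forces $d_A(b) \leq d_A(r_k(B))$, so $b$ occurs in the intersection defining $\E_{d_A(r_k(B))}$, while $B/k \leq A/d_A(r_k(B))$ excludes the incompatible part. Hence $B/d_B(b) = B/k \in \D_b^-$, and by the reformulation $B$ diagonalises $\vec{\D}$.

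The routine verifications (density/openness of the auxiliary families, the global-density extension, and prepending $a$) are mechanical; the genuine content, and the step I would be most careful about, is the depth bookkeeping of the last paragraph — the monotonicity $d_A(b) \leq d_A(r_{d_B(b)}(B))$ combined with \textbf{A2}-finiteness is exactly what allows a single depth-indexed dense open set $\E_m$ to simultaneously absorb all of the (infinitely many) tail requirements $\D_b^-$ living at that level.
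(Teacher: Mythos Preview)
Your approach is essentially the paper's: both directions translate between the $b$-indexed dense opens of Definition~\ref{def:semiselective} and the depth-indexed ones of~(2) via the tail map $C \mapsto C/\lh(b)$, and the finite-intersection step in (2)$\Rightarrow$(1) is the same idea (the paper takes $d_A(b) = d_A(a)+n$ where you take $d_A(b)\le m$; your nested version is a bit cleaner, and your explicit globalisation via $\{T : T \perp A/m\}$ fills in a step the paper leaves implicit).

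One small slip in your (2)$\Rightarrow$(1): when you apply (2) to the tail $A' := A/d_A(a)$, the conclusion is $B'/k \in \E'_{d_{A'}(r_k(B'))}$, and $d_{A'}$ differs from $d_A$ by the additive shift $d_A(a)$. So after prepending $a$ you obtain $B/n \in \E'_{d_A(r_n(B))-d_A(a)}$, not $\E'_{d_A(r_n(B))}$ as you claim; since your $\E_m$ are decreasing in $m$, this is the wrong direction of containment, and for $b$ with $d_A(b)$ close to $d_A(r_k(B))$ the desired $\D_b^-$ need not appear in that intersection. The fix is routine: feed (2) the shifted family $k \mapsto \E'_{d_A(a)+k}$ instead of $\E'_k$ (or, as the paper does, apply (2) to $A$ itself and then verify $a < B$ from $B \in \D_a^-$). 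With that adjustment your depth bookkeeping in the last paragraph goes through exactly as written.
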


In this statement (2) of the lemma, we also say that $B$ \emph{diagonalises $\{\D_n\}_{n<\omega}$ below $A$}.

\begin{proof}
    \underline{(1)$\implies$(2):} Let $\{\D_n\}_{n<\omega}$ be a collection of dense open subsets of $\H$, and let $A \in \H$. For each $b \in \cal{AR}\restrictedto A$, we let:
    \begin{align*}
        \D_b := \{b^\frown(B/b) : B \in \D_{d_A(b)} \text{ and } B \leq A\}.
    \end{align*}
    It's clear that $\D_b \subseteq \H\restrictedto[b,A]$ for all $b \in \cal{AR}\restrictedto A$, and that $\D_b$ is $\leq$-downward closed in $\H\restrictedto[b,A]$. We see that $\D_b$ is dense in $\H\restrictedto[b,A]$ --- if $B \in \H\restrictedto[b,A]$, then since $\D_{d_A(b)}$ is dense open in $\H\restrictedto A$, there exists some $C \in \D_{d_A(b)}$ such that $C \leq B$. Then $b^\frown(C/b) \in \D_b$ and $b^\frown(C/b) \leq B$. Since $\H$ is semiselective, there exists some $B \in \H\restrictedto A$ such that for all $b \in \cal{AR}\restrictedto B$, $[b,B] \subseteq [b,A_b]$ for some $A_b \in \D_b$. In particular, for all $n$ we have that $B \in [r_n(B),B] \subseteq [r_n(B),A_{r_n(B)}]$ for some $A_{r_n(B)} \in \D_{r_n(B)}$, so $B/n \leq A_{r_n(B)}/r_n(B) \in \D_{d_A(r_n(B))}$.

    \underline{(2)$\implies$(1):} Let $A \in \H$ and $a \in \cal{AR}\restrictedto A$. Let $\vec{\D} = \{\D_b\}_{b \in \cal{AR}\restrictedto A}$ be a dense open family below $[a,A]$. For each $b \in \cal{AR}\restrictedto[a,A]$, let:
    \begin{align*}
        \D_b' &:= \{B/b : B \in \D_b\}, \\
        \D_n &:= \bigcap_{\substack{b \in \cal{AR}\restrictedto[a,A] \\ d_A(b) = d_A(a) + n}} \D_b'.
    \end{align*}
    We claim that $\D_b'$ is dense open in $\H\restrictedto A$ for all $b \in \cal{AR}\restrictedto[a,A]$. This implies that $\D_n$ is dense open in $\H\restrictedto A$, as it is a finite intersection of dense open subsets of $\H\restrictedto A$. Let $B \in \H\restrictedto A$, and let $b \in \cal{AR}\restrictedto[a,A]$. Then $b^\frown(B/b) \in \H\restrictedto[b,A]$, and since $\D_b$ is dense open in $\H\restrictedto[b,A]$, there exists some $C \in \D_b$ such that $C \leq b^\frown(B/b)$. This implies that $C/b \leq B/b$, and $C/b \in \D_b'$, as desired.

    By our assumption, there exists some $B \in \H\restrictedto A$ such that $B/n \in \D_{d_A(r_n(B))}$ for all $n$. Observe that $a < B$, as $B \in \D_0 \subseteq \D_a$, so $B \leq C/a$ for some $C \in \D_a$. Therefore, $a^\frown B \in \cal{R}$ is well-defined. We claim that $a^\frown B$ diagonalises $\vec{\D}$. Let $b \in \cal{AR}\restrictedto[a,a^\frown B]$, and suppose that $d_A(b) = d_A(a) + n$. Let $b'$ be such that $b = a^\frown b'$, and note that $d_A(b') = d_A(b)$. Since $\c{b'} \subseteq \c{r_{d_B(b')}(B)}$, we have that $d_A(b') \leq d_A(r_{d_B(b')}(B))$. Therefore:
    \begin{align*}
        (a^\frown B)/b &= B/b' = B/d_B(b') \in \D_{d_A(r_{d_B(b')}(B))} \subseteq \D_{d_A(b')} = \D_{d_A(b)} \subseteq \D_b',
    \end{align*}
    so $A_b := b^\frown((a^\frown B)/b) \in \D_b$. Observe that $[b,B] = [b,A_b]$, and since $b$ is arbitrary, $B$ diagonalises $\vec{\D}$.
\end{proof}

\begin{lemma}
    Let $\H \subseteq E^{[\infty]}$ be a semicoideal. If $\H$ is selective, then $\H$ is semiselective.
\end{lemma}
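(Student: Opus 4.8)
The plan is to avoid working with the full definition of semiselectivity (Definition \ref{def:semiselective}) and instead invoke the equivalent formulation supplied by Lemma \ref{lem:semiselectivity.equivalence}. Thus it suffices to verify condition (2) of that lemma: given any collection $\{\D_n\}_{n<\omega}$ of dense open subsets of $\H$ (under $\leq$) and any $A \in \H$, I must produce some $B \in \H\restrictedto A$ with $B/n \in \D_{d_A(r_n(B))}$ for every $n$. The point of this reduction is that condition (2) is phrased precisely in terms of diagonalising within $A$ along the depth indices $d_A(r_n(B))$, which is exactly the data that selectivity (Definition \ref{def:selective}) controls.

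First I would build a $\leq$-decreasing sequence $(A_n)_{n<\omega}$ in $\H$ below $A$ with $A_n \in \D_n$ for each $n$. Using density of $\D_0$ in $\H$ (applied below $A$) pick $A_0 \leq A$ with $A_0 \in \D_0$, and recursively, given $A_n \in \D_n$, use density of $\D_{n+1}$ below $A_n$ to pick $A_{n+1} \leq A_n$ with $A_{n+1} \in \D_{n+1}$. Since the sequence is $\leq$-decreasing and $A_0 \leq A$, it is a $\leq$-decreasing chain in $\H$ below $A$, so selectivity applies: there is some $B \in \H\restrictedto A$ diagonalising $(A_n)_{n<\omega}$ within $A$, i.e. $B/n \leq A_{d_A(r_n(B))}$ for all $n$. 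Now for each $n$, the index $k := d_A(r_n(B))$ is a well-defined natural number, and by construction $A_k \in \D_k$; moreover $B/n \in \H$ (since $B \in \H$ and $B \leq^* B/n$, witnessed by $N = n$, so upward closure of the semicoideal $\H$ yields $B/n \in \H$). As $\D_k$ is open, hence $\leq$-downward closed in $\H$, the relation $B/n \leq A_k \in \D_k$ forces $B/n \in \D_k = \D_{d_A(r_n(B))}$, which is exactly condition (2).

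The only subtle points to get right are bookkeeping rather than genuine obstacles: one must match the subscript correctly, ensuring that the sequence $(A_n)$ has $A_k \in \D_k$ for \emph{every} index $k$ (not just the ones that happen to arise as some $d_A(r_n(B))$), so that $A_{d_A(r_n(B))}$ is guaranteed to lie in $\D_{d_A(r_n(B))}$; and one must explicitly use that each $\D_n$ is \emph{open} (downward closed) to pass from the $\leq$-domination $B/n \leq A_{d_A(r_n(B))}$ to actual membership $B/n \in \D_{d_A(r_n(B))}$, together with the remark that tails of members of $\H$ remain in $\H$ by $\leq^*$-upward closure. I do not expect any hard step here: the content of the lemma is entirely that selectivity's diagonalisation of decreasing chains is strong enough to realize the "open dense family" diagonalisation once the latter has been transcribed into the depth-indexed form by Lemma \ref{lem:semiselectivity.equivalence}.
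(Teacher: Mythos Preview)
Your proposal is correct and follows essentially the same route as the paper: reduce to condition (2) of Lemma \ref{lem:semiselectivity.equivalence}, recursively pick a $\leq$-decreasing chain $A_n \in \D_n$ below $A$, apply selectivity to obtain $B$ with $B/n \leq A_{d_A(r_n(B))}$, and then use downward closure of the $\D_n$'s. The only difference is that you spell out a few bookkeeping points (e.g.\ why $B/n \in \H$) that the paper leaves implicit.
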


\begin{proof}
    Let $\{\D_n\}_{n<\omega}$ be a family of dense open subsets of $\H$, and let $A \in \H$. Define a $\leq$-decreasing sequence in $\H$ as follows: Let $A_0 \in \D_0$ be such that $A_0 \leq A$. If $A_n \in \D_n$ has been defined, then let $A_{n+1} \in \D_{n+1}$ be such that $A_{n+1} \leq A_n$. By the selectivity of $\H$, there exists some $B \in \H\restrictedto A$ such that $B/n \leq A_{d_A(r_n(B))} \in \D_{d_A(r_n(B))}$ for all $n$. Since $\D_n$'s are all $\leq$-downward closed, $B/n \in \D_{d_A(r_n(B))}$ for all $n$.
\end{proof}

An abstract notion of selectivity was also proposed in \cite{DMN15}, but the definition appears to be problematic. For instance, Proposition 5.4 of \cite{DMN15} claims that every selective coideal is semiselective, but there appears to be a gap in the proof --- it is unclear how the element $A^{2,1} \in \D_{a_1^2}\restrictedto A^{1,n_1}$ may be obtained. It seems unlikely that this version of selectivity coincides with our definition of selectivity.

\begin{definition}
\label{def:filter}
    Let $\H \subseteq E^{[\infty]}$ be a semicoideal. 
    \begin{enumerate}
        \item $\H$ is a \emph{filter} if $(\H,\leq)$ is a filter (as a partial order), i.e. for all $A,B \in \H$ there exists some $C \in \H$ such that $C \leq A$ and $C \leq B$. 
            
        \item $\H$ is an \emph{ultrafilter} if it is a coideal and a filter.
    \end{enumerate}
\end{definition}

We shall dedicate the remaining portion of this subsection to showing that if $\U \subseteq E^{[\infty]}$ is a selective ultrafilter, then a block sequence $D \in (E^{[\infty]})^{\V[G]}$ is $\mathbb{M}_\U$-generic iff $D \leq^* A$ for all $A \in \U$. The proof is very similar to the case for the Ellentuck space --- a proof may be found in, for instance, Chapter 26 of \cite{H22}.

The following proposition, in the setting of topological Ramsey spaces, was stated as Theorem 6.6 of \cite{DMN15}, extending the original theorem in the setting $[\omega]^\omega$ in \cite{F98}. We shall reprove this result for block subspaces.

\begin{proposition}
\label{prop:semiselective.adds.selective.ultrafilter}
    Suppose that $E$ is a vector space over $\mathbb{F}_2$. Let $\H \subseteq E^{[\infty]}$ be a coideal. The following are equivalent:
    \begin{enumerate}
        \item $\H$ is semiselective.
        
        \item $(\H,\leq^*)$ is $\sigma$-distributive, and every $(\H,\leq^*)$-generic filter $\U$ is a selective ultrafilter.
    \end{enumerate}
\end{proposition}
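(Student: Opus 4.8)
The plan is to run the classical Farah-style argument for selective/semiselective coideals (as in the Ellentuck-space treatment of \cite{H22}), transported to block subspaces through the concrete characterisation of semiselectivity in Lemma~\ref{lem:semiselectivity.equivalence}; throughout I would freely replace ``$\H$ is semiselective'' by statement (2) of that lemma.

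For $(1)\implies(2)$ I would first establish $\sigma$-distributivity. Given $\leq^*$-dense-open sets $\{\D_n\}_{n<\omega}$ in $\H$ and $A\in\H$, the obstruction is that the depth-indexed diagonalization of Lemma~\ref{lem:semiselectivity.equivalence} only drops tails of the diagonalizer into the $\D_n$'s, whereas I want a single $B$ lying in every $\D_n$. I would circumvent this with a stem-indexed family: for each stem $b\in\cal{AR}\restrictedto A$ of length $\ell$, set $\D_b:=\{C\in\H\restrictedto[b,A]:C\in\D_\ell\}$. Each $\D_b$ is dense open below $[b,A]$ since $\D_\ell$ is $\leq^*$-dense-open and one may reattach the stem $b$ to a tail of a witness, so $\{\D_b\}$ is dense open below $[\emptyset,A]$. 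A diagonalizer $B\leq A$ then satisfies, for every $\ell$, $B\leq A_{r_\ell(B)}$ for some $A_{r_\ell(B)}\in\D_{r_\ell(B)}\subseteq\D_\ell$; as $\D_\ell$ is $\leq^*$-downward closed this yields $B\in\D_\ell$ for all $\ell$, hence $B\in\bigcap_n\D_n$ below $A$.

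Next I would show a generic $\U$ is a selective ultrafilter. Directedness is immediate from genericity, and $\U$ is $\leq^*$-upward closed in $E^{[\infty]}$ because $\H$ is. By $\sigma$-distributivity $\V[G]$ adds no subsets of $E$, so for each $Y\subseteq E$ the set $\{B\in\H:\c{B}\subseteq Y\text{ or }\c{B}\subseteq Y^c\}$ lies in $\V$ and is $\leq^*$-dense by the coideal property (Definition~\ref{def:coideal.vector.spaces}); genericity produces a $Y$-homogeneous member of $\U$, and directedness relativizes this below any $A\in\U$, giving the pigeonhole property. For selectivity, a $\leq$-decreasing $(A_n)$ in $\U$ lies in $\V$ (again by $\sigma$-distributivity). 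I would apply Lemma~\ref{lem:semiselectivity.equivalence} to $\D_k:=\{D\in\H:D\leq A_k\text{ or }\c{D}\cap\c{A_k}\text{ is small}\}$, which are $\leq$-dense-open by the pigeonhole property together with Lemma~\ref{lem:ad.is.very.small}, to see that the $B$ with $B/n\in\D_{d_{A_0}(r_n(B))}$ for all $n$ are dense below $A_0$; since $\U$ meets every set dense below one of its members, it contains such a $B$, and because $B$ and every $A_k$ lie in the filter $\U$ they are compatible, so by Lemma~\ref{lem:ad.iff.tail.disjoint} the ``small'' alternative is impossible and $B/n\leq A_{d_{A_0}(r_n(B))}$. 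This is exactly the diagonalization of $(A_n)$ within $A_0$ required by Definition~\ref{def:selective}.

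For $(2)\implies(1)$ I would force with $(\H,\leq^*)$ and reflect. Fix dense open $\{\D_n\}$ in $(\H,\leq)$ and $A\in\H$; since $\leq$ refines $\leq^*$ each $\D_n$ is predense in $(\H,\leq^*)$, so, using that $\U$ meets every set dense below a member, I may choose a $\leq$-decreasing sequence $A_n\in\U\cap\D_n$ with $A_0\leq A$. As $\U$ is a selective ultrafilter it contains a diagonalizer $B$ of $(A_n)$ within $A$, whence $B/n\leq A_{d_A(r_n(B))}\in\D_{d_A(r_n(B))}$ and so $B/n\in\D_{d_A(r_n(B))}$ by $\leq$-downward closure. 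Finally $\sigma$-distributivity adds no reals, so $E^{[\infty]}$, the family $\{\D_n\}$, the block subspace $A$, and the witness $B$ all lie in $\V$; as ``$B$ diagonalizes $\{\D_n\}$ within $A$'' is absolute, such a $B$ already exists in $\V$, so $\H$ verifies statement (2) of Lemma~\ref{lem:semiselectivity.equivalence} and is semiselective.

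The hard part will be the selectivity of the generic ultrafilter in $(1)\implies(2)$: one must arrange the dense-open sets so that their depth-indexed diagonalizer coincides with the within-$A_0$ diagonalization of Definition~\ref{def:selective}, reconcile the depth functions computed relative to $A_0$ versus the base $C\leq^* A_0$ at which density is checked (a monotone reindexing of the $\D_k$ of the type used in the proof of Proposition~\ref{prop:selective.iff.(p,q)}), and invoke the filter property to eliminate the almost-disjoint branches. The remainder is careful bookkeeping between $\leq$ and $\leq^*$, passing to tails, and reattaching stems, all routine given Lemmas~\ref{lem:block.subspaces.intersection.lemma}, \ref{lem:ad.iff.tail.disjoint} and \ref{lem:ad.is.very.small}.
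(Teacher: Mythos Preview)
Your proposal is correct and follows essentially the same Farah-style architecture as the paper's own proof: diagonalize for $\sigma$-distributivity, use no-new-reals plus the coideal pigeonhole for the ultrafilter property, encode ``below $A_n$ or incompatible with $A_n$'' as dense sets and let the filter property kill the bad alternative for selectivity, and force-then-reflect for the converse. The only substantive differences are cosmetic: for $\sigma$-distributivity you work directly with the stem-indexed family of Definition~\ref{def:semiselective} whereas the paper invokes Lemma~\ref{lem:semiselectivity.equivalence}; and for selectivity you take $\D_k=\{D:D\leq A_k\text{ or }\c{D}\cap\c{A_k}\text{ small}\}$ (dense via the pigeonhole) while the paper uses the tautologically dense $\D_n=\{B:B\leq A_n\text{ or no }C\in\H\text{ is below both }B\text{ and }A_n\}$. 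You are also more explicit than the paper about the absoluteness/reflection step in $(2)\Rightarrow(1)$, and you correctly flag the depth-reindexing issue (diagonalizing within $A_0$ versus within an arbitrary $C\leq^* A_0$) that the paper passes over silently.
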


\begin{proof}
    \underline{(1)$\implies$(2):} Let $\{\D_n\}_{n<\omega}$ be a family of dense open subsets of $\H$ under the partial order $\leq^*$. Note that each $\D_n$ is also dense open in $\H$ under $\leq$, so by semiselectivity there exists some $B \in \H$ which diagonalises $\{\D_n\}_{n<\omega}$ (below some $A \in \H$). Since $\D_n$ is $\leq^*$-downward closed, this implies that $B \in \D_n$ for all $n$, so $(\H,\leq^*)$ is $\sigma$-distributive.

    Let $\U$ be a $(\H,\leq^*)$-generic filter. It's clear that $\U$ is a filter (as in Definition \ref{def:filter}). We need to show that $\U$ is a coideal (and hence an ultrafilter). Let $Y \subseteq E$, and let $A \in \U \subseteq \H$. Since $Y$ is countable and $(\H,\leq^*)$ adds no reals, $Y$ is in the ground model. By Hindman's theorem, the set:
    \begin{align*}
        \D := \{B \in \H\restrictedto A : \c{B} \subseteq Y \text{ or } \c{B} \subseteq Y^c\}
    \end{align*}
    is dense open below $A$ in $(\H,\leq^*)$. Therefore, the genericity of $\U$ implies that $\U \cap \D \neq \emptyset$, i.e. there exists some $B \leq A$ in $\U$ such that $\c{B} \subseteq Y$ or $\c{B} \subseteq Y^c$. Hence, $\U$ is a coideal.

    We now show that $\U$ is selective. Let $(A_n)_{n<\omega}$ be a $\leq$-decreasing sequence in $\U$, and let $A := A_0$. Since $(\H,\leq^*)$ adds no reals, the sequence $(A_n)_{n<\omega}$ is in the ground model. For each $n$, let:
    \begin{align*}
        \D_n := \{B \in \H\restrictedto A : B \leq A_n \text{ or } (\nexists C \in \H\restrictedto A \text{ s.t. } C \leq B \text{ and } C \leq A_n)\}.
    \end{align*}
    It's clear that $\D_n$ is dense open below $A$ in $\H$ for all $n$. Since $\H$ is semiselective, we have that the set:
    \begin{align*}
        \E := \{B \in \H\restrictedto A : \text{$B$ diagonalises $\{\D_n\}_{n<\omega}$ below $A$}\}
    \end{align*}
    is dense open below $A$ in $\H$. By the genericity of $\U$, let $B \in \U \cap \E$. Then $B/n \in \D_{d_A(r_n(B))}$ for all $n$, and since $\U$ is a filter, $B$ is compatible with $A_n$ for all $n$ in $\U$, so $B/n \leq A_{d_A(r_n(B))}$ for all $n$. Thus, $B$ diagonalises $(A_n)_{n<\omega}$ below $A$.

    \underline{(2)$\implies$(1):} Let $\{\D_n\}_{n<\omega}$ be a family of dense open subsets of $\H$, and let $A \in \H$. Let $\U$ be a $(\H,\leq^*)$-generic filter containing $A$. We define a $\leq$-decreasing sequence $(A_n)_{n<\omega}$ in $\U$ as follows: By the genericity of $\U$, let $A_0 \in \U \cap \D_0$. Assume that $A_n \in \U \cap \D_n$ has been defined. Since $\D_{n+1} \cap \H\restrictedto A_n$ is dense open below $A_n$ in $\H$ and $A_n \in \U$, there exists some $A_{n+1} \in \U\restrictedto A_n$ such that $A_{n+1} \in \U \cap \D_{n+1}$. By the selectivity of $\U$, there exists some $B \in \U\restrictedto A$ such that $B/n \leq A_{d_A(r_n(B))} \in \D_{d_A(r_n(B))}$ for all $n$. Therefore, $B \in \H\restrictedto A$ diagonalises $\{\D_n\}_{n<\omega}$ below $A$.
\end{proof}

\begin{definition}
\label{def:capture.dense.set}
    Let $(\cal{R},\leq,r)$ be a closed triple satisfying \textbf{A1}-\textbf{A4}, and let $\H \subseteq \cal{R}$. Let $\D \subseteq \mathbb{M}_\H$. We say that $(a,A) \in \mathbb{M}_\H$ \emph{captures} $\D$ if for all $B \in \H\restrictedto[a,A]$, there exists some initial segment $b \in \cal{AR}\restrictedto[a,B]$ of $B$ such that $(b,B) \in \D$.
\end{definition}

We remark that if $\cal{AR}$ is countable (e.g. $E^{[<\infty]}$ is countable), then the statement ``$(a,A)$ captures $\D$'' is $\Pi_1$ with respect to the Polish topology of $\cal{R}$. Since $\cal{R}$ is a subspace of $\cal{AR}^\N$ and $\cal{AR}$ is equipped with the discrete topology, we may apply Mostowki's absoluteness theorem to conclude that ``$(a,A)$ captures $\D$'' is absolute.

\begin{lemma}
\label{lem:capturing.possible}
    Suppose that $E$ is a vector space over $\mathbb{F}_2$. Let $\U \subseteq E^{[\infty]}$ be a selective ultrafilter. Let $\D \subseteq \mathbb{M}_\U$ be dense open. For all $a \in E^{[<\infty]}$ and $A \in E^{[\infty]}$ such that $a \in E^{[<\infty]}\restrictedto A$, there exists some $B \in \U\restrictedto[a,A]$ such that $(a,B)$ captures $\D$.
\end{lemma}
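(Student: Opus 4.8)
The plan is to run the combinatorial forcing of Section~\ref{sec:TRS.and.abstract.mathias.forcing} with respect to the ambient coideal $\U$ and the Ellentuck-open set $\bar{\D} = \bigcup_{(c,C) \in \D}[c,C]$. Note first that $\U$ is a coideal (being an ultrafilter) and is semiselective (being selective), so every lemma about the combinatorial forcing applies with $\H = \U$. Since $\D$ is open, the notion of ``capturing'' can be repackaged as follows: if a block subspace $B \in \U\restrictedto[a,A]$ satisfies $[a,B] \subseteq \bar{\D}$ (i.e.\ $B$ \emph{accepts} $a$ with respect to $\bar{\D}$), then $(a,B)$ will capture $\D$. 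I would therefore split the argument into (i) producing such a $B$, and (ii) deducing the capturing condition of Definition~\ref{def:capture.dense.set} from $[a,B] \subseteq \bar{\D}$.

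For step (i) I would show that $A$ cannot reject $a$ with respect to $\bar{\D}$; this is exactly the observation recorded just before Proposition~\ref{prop:semiselective.implies.Mathias}, which I would re-derive. Suppose $A$ rejects $a$. By Lemma~\ref{lem:completely.reject.reduction} there is some $B' \in \U\restrictedto[a,A]$ rejecting every $b \in \cal{AR}\restrictedto[a,A]$. Using density of $\D$ in $\mathbb{M}_\U$, pick $(c,C) \in \D$ with $(c,C) \leq (a,B')$; then $C \in \U$, and unpacking $[c,C] \subseteq [a,B']$ yields $a \sqsubseteq c \sqsubseteq C$ and $C \leq B'$, so $c \in \cal{AR}\restrictedto[a,A]$ and $C \in \U\restrictedto[c,B']$. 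But $(c,C) \in \D$ forces $[c,C] \subseteq \bar{\D}$, so $C$ accepts $c$, contradicting that $B'$ rejects $c$. Hence $A$ does not reject $a$, which by the definition of rejection produces the desired $B \in \U\restrictedto[a,A]$ with $[a,B] \subseteq \bar{\D}$.

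For step (ii) I would verify the capturing condition directly for $(a,B)$. Given any $C \in \U\restrictedto[a,B]$, we have $C \in [a,B] \subseteq \bar{\D}$, so $C \in [c',C']$ for some $(c',C') \in \D$; then $c' \sqsubseteq C$ and $C \leq C'$, whence $(c',C) \leq (c',C')$ and so $(c',C) \in \D$ because $\D$ is open. Since $a$ and $c'$ are both initial segments of $C$ they are $\sqsubseteq$-comparable: if $a \sqsubseteq c'$ I take $b := c'$, while if $c' \sqsubseteq a$ then $(a,C) \leq (c',C)$ gives $(a,C) \in \D$ and I take $b := a$. In either case $b \in \cal{AR}\restrictedto[a,C]$ is an initial segment of $C$ with $(b,C) \in \D$, which is precisely the requirement of Definition~\ref{def:capture.dense.set}.

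The genuinely delicate point is step (i): the witness $B$ must lie in $\U$ itself and not merely in some larger semiselective coideal, as this is what the Mathias- and Prikry-type arguments downstream rely on; here it is automatic only because the forcing is localized to $\U$, so Lemma~\ref{lem:completely.reject.reduction} produces a reduction inside $\U$. Step (ii) is where openness of $\D$ is indispensable — it is the sole ingredient converting the membership $C \in \bar{\D}$ back into an honest condition $(b,C) \in \D$ whose stem refines $a$ — but beyond the $\sqsubseteq$-comparability bookkeeping I anticipate no real obstruction.
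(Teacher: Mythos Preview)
Your proof is correct and takes a genuinely different route from the paper's. The paper first picks, for every $b \in E^{[<\infty]}\restrictedto[a,A]$, a witness $A_b \in \U$ with $(b,A_b) \in \D$ (when one exists), uses the filter property and selectivity of $\U$ to diagonalise these into a single $B \in \U\restrictedto[a,A]$ with $B/b \leq A_b$ for all relevant $b$, and then applies Proposition~\ref{prop:open.is.H-ramsey} to an auxiliary open set $\X$ to extract $C \leq B$ such that $[a,C] \subseteq \X$, which is the capture condition. Your argument bypasses the explicit diagonalisation entirely: you recognise that the observation preceding Proposition~\ref{prop:semiselective.implies.Mathias} (no condition rejects its stem with respect to $\bar{\D}$) already hands you $B \in \U\restrictedto[a,A]$ with $[a,B] \subseteq \bar{\D}$, and then you read off the capture condition of Definition~\ref{def:capture.dense.set} directly from openness of $\D$.

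What your approach buys is economy: the selectivity of $\U$ is invoked only implicitly through Lemma~\ref{lem:completely.reject.reduction}, and there is no need for the auxiliary set $\X$ or a second Ramsey-type reduction. The paper's approach, by contrast, makes the role of the witnesses $A_b$ and the diagonalisation visible, which mirrors the classical Mathias argument for $[\omega]^\omega$ more closely. One small remark: your step~(ii) actually proves capture for every $C \in [a,B]$, not merely $C \in \U\restrictedto[a,B]$, and this stronger form is what the absoluteness remark after Definition~\ref{def:capture.dense.set} and the application in Proposition~\ref{prop:M-generic.characterisation} really need; you may want to state it that way.
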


\begin{proof}
    For each $b \in E^{[<\infty]}\restrictedto[a,A]$, let $A_b \in \U\restrictedto[b,A]$ be any block sequence such that $(b,A_b)\in \D$ if it exists --- otherwise, we let $A_b := A$. Since $\U$ is an ultrafilter, we may inductively define a $\leq$-decreasing sequence in $\U\restrictedto[a,A]$ such that $A_n \leq A_b/a$ for all $b \in E^{[<\infty]}\restrictedto[a,A]$ in which $d_A(b) \leq n$. Since $\U$ is selective, there exists some $B' \in \H\restrictedto(A/a)$ which diagonalises $(A_n)_{n<\omega}$ below $A$. Let $B := a^\frown B' \in \U\restrictedto[a,A]$, and consider the set:
    \begin{align*}
        \X := \{C \in E^{[\infty]} : C \in [a,B] \to (\exists b \in E^{[<\infty]}\restrictedto[a,C] \text{ s.t. } (b,B) \in \D)\}.
    \end{align*}
    Since $\X$ is a metrically open subset of $E^{[\infty]}$, by Proposition \ref{prop:open.is.H-ramsey} there exists some $C \in \U\restrictedto[a,B]$ such that $[a,C] \subseteq \X$ or $[a,C] \subseteq \X^c$. Observe that if $[a,C] \subseteq \X$, then $(a,C)$ captures $\D$, so we're done. It remains to show that $[a,C] \subseteq \X^c$ is not possible. Since $\D$ is dense in $\mathbb{M}_\H$, there exists some condition $(b,D) \leq (a,C)$ in $\D$. This means that $(b,A_b) \in \D$, and since $B/b \leq A_{d_A(b)} \leq A_b$, we have that $(b,B) \in \D$. In other words, there exists some $b \in E^{[<\infty]}\restrictedto C$ such that $(b,B) \in \D$, so $C \in \X$.
\end{proof}

\begin{proposition}
\label{prop:M-generic.characterisation}
    Suppose that $E$ is a vector space over $\mathbb{F}_2$. Let $\U \subseteq E^{[\infty]}$ be a selective ultrafilter. Then a block sequence $D$ is $\mathbb{M}_\U$-generic iff $D \leq^* A$ for all $A \in \U$.
\end{proposition}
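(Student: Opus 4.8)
The plan is to prove the two implications separately, since they draw on quite different tools. Throughout I will use that $E$ is a fixed countable vector space in the ground model $\V$, so that $E^{[<\infty]}\subseteq\V$; in particular each finite initial segment $r_N(A_G)$ is a ground-model object, even though $A_G$ itself need not be.

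For the forward direction, suppose $A_G$ is $\mathbb{M}_\U$-generic and fix $A\in\U$. I would show that
\[
\D_A := \{(b,B)\in\mathbb{M}_\U : B/b \leq A\}
\]
is dense. Given any $(c,C)$ with $C\in\U$, use that $\U$ is a filter to pick $C'\in\U$ with $C'\leq C$ and $C'\leq A$, and set $B := c^\frown(C'/c)$ (truncating $C'$ so that $c < C'/c$). Then $B\in\U$, since $B\leq^* C'$ and $\U$ is $\leq^*$-upward closed; moreover $(c,B)\leq(c,C)$ and $B/c = C'/c \leq A$, so $(c,B)\in\D_A$. Meeting $\D_A$ with $G_{A_G}$ produces $(b,B)$ with $A_G\in[b,B]$ and $B/b\leq A$. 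The vectors of $A_G$ past $b$ have support above $\max\supp(b)$ and lie in $\c{B}$, hence lie in $\c{B/b}$, so $A_G/b\leq B/b\leq A$, i.e. $A_G\leq^* A$.

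The converse is the substantial direction. Assume $A_G\leq^* A$ for every $A\in\U$ and fix a dense open $\D\subseteq\mathbb{M}_\U$; I must produce $(b,C)\in G_{A_G}\cap\D$, that is, some $C\in\U$ with $A_G\leq C$ and some $b\sqsubseteq A_G$ with $(b,C)\in\D$. The engine is Lemma~\ref{lem:capturing.possible}: as its proof via Proposition~\ref{prop:open.is.H-ramsey} shows, the side condition $P\in\U$ it produces satisfies the \emph{strong} capturing statement ``$\forall D\in[a,P]\ \exists b\sqsubseteq D\ (b,P)\in\D$'' with $P$ fixed (using downward closure of $\D$ to replace the fusion side by $P$), and this is exactly the $\Pi_1$ form — absolute by the remark following Definition~\ref{def:capture.dense.set} — that I intend to export to $A_G$. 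Working in $\V$, I would first manufacture a single $B^*\in\U$ capturing $\D$ \emph{at every stem}: enumerate $E^{[<\infty]}=\{a_k:k<\omega\}$ and build a $\leq$-decreasing sequence $(B_k)$ in $\U$ by applying Lemma~\ref{lem:capturing.possible} to $(a_k, a_k^\frown(B_{k-1}/a_k))$, arranging that $a_k^\frown(B_k/a_k)$ captures $\D$. Since $\U$ is a selective ultrafilter it has the (p)-property (Proposition~\ref{prop:selective.iff.(p,q)}), so a weak diagonalisation $B^*\in\U$ exists. Because strong capturing is preserved under $\leq$-refinement of the side condition, after passing to tails $a_k^\frown(B^*/a_k)$ still captures $\D$ for every $k$.

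Finally I would descend to the outer model. As $B^*\in\U$ we have $A_G\leq^* B^*$, so fix $N$ with $A_G/N\leq B^*$ and put $a := r_N(A_G)$ and $C := a^\frown(B^*/a)$. Then $a\in\V$, hence $C\in\V$, and $C\in\U$ because $C\leq^* B^*$; moreover $A_G\leq C$, since $a\sqsubseteq A_G$ and $A_G/N\leq B^*/a\leq C$. The statement that $a^\frown(B^*/a)=C$ captures $\D$ holds in $\V$ and is $\Pi_1$ with the real parameter $\{b:(b,C)\in\D\}$, so by Mostowski absoluteness it holds in the outer model; applying it to $D:=A_G\in[a,C]$ yields $b\sqsubseteq A_G$ with $(b,C)\in\D$. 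Since $A_G\in[b,C]$, this $(b,C)$ lies in $G_{A_G}\cap\D$, which is what genericity requires. The main obstacle — and the reason both the diagonalisation over all stems and the absoluteness remark are genuinely needed — is that every condition of $G_{A_G}$ must carry a side condition whose span contains \emph{all} of $A_G$, including its finitely many ``junk'' vectors lying outside a typical element of $\U$; capturing at the empty stem is useless, as the resulting initial segment would not be an initial segment of $A_G$ from the start. Capturing must therefore be prepared in $\V$ at the specific ground-model stem $r_N(A_G)$ that $A_G$ eventually realises, and the resulting fact must then be transported to the genuinely new block sequence $A_G$ via absoluteness.
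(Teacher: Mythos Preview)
Your forward direction is correct and essentially identical to the paper's argument.

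In the backward direction your overall architecture matches the paper's (prepare capturing side conditions for every stem in $\V$, diagonalise inside $\U$, then export to $A_G$ via absoluteness), and you correctly isolate a subtlety that the paper's write-up glosses over: the finitely many ``junk'' vectors of $A_G$ force the eventual witnessing condition to have the form $(b,a^\frown(B^*/a))$ rather than $(b,B^*)$. That observation, and the corresponding definition of $C:=a^\frown(B^*/a)$, is a genuine improvement in clarity.

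However, step~4 as written does not go through. You obtain $B^*$ only as a \emph{weak} diagonalisation of $(B_k)_{k<\omega}$ via the (p)-property, i.e.\ $B^*\leq^* B_k$ for each $k$; but to conclude that $a_k^\frown(B^*/a_k)$ still strongly captures $\D$ you need $a_k^\frown(B^*/a_k)\leq a_k^\frown(B_k/a_k)$, which requires $B^*/a_k\leq B_k$. Nothing ties the index $k$ of the enumeration to the support level $\max\supp(a_k)$, so for a stem $a_k$ of small support but large $k$, the tail $B^*/a_k$ may contain many vectors not in $\c{B_k}$. The phrase ``after passing to tails'' does not repair this: the tail of $B^*$ you would need depends on $k$, and in step~5 you commit to the specific tail $B^*/a$ determined solely by the support of $a=r_N(A_G)$. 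The fix is to index by depth rather than by enumeration, exactly as the paper does: set $C_n\in\U$ with $C_n\leq B_a$ for every $a$ with $\max\supp(a)<n$ (a finite intersection, since $|\mathbb{F}|=2$), and then use full selectivity to diagonalise $(C_n)_{n<\omega}$ within $E$. This guarantees $B^*/a\leq B_a$ (possibly after dropping one further vector of $B^*$) for \emph{every} $a\in E^{[<\infty]}$, which is what your step~7 actually needs.
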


\begin{proof}
    Given a block sequence $D$, we let:
    \begin{align*}
        G := \{(a,A) \in \mathbb{M}_\U : D \in [a,A]\}.
    \end{align*}

    \underline{$\implies$:} Suppose that $D$ is $\mathbb{M}_\U$-generic, i.e. $G$ is a generic filter. For any $A \in \U$ and $a \in E^{[<\infty]}\restrictedto A$, the set:
    \begin{align*}
        \D_A := \{(b,B) \in \mathbb{M}_\U : B \leq^* A \text{ and } b \in E^{[<\infty]}\restrictedto B\}
    \end{align*}
    is dense open in $\mathbb{M}_\U$ --- indeed, for any $(b,B) \in \mathbb{M}_\U$, let $C \in \U$ be such that $C \leq B$ and $C \leq A$. Then $(a,a^\frown(C/a)) \in \D_A$, and $C \leq^* B$ and $C \leq^* B$. Since $G$ is generic, $G \cap \D_A \neq \emptyset$, so we may let $(b,B) \in \D_A$ such that $D \in [b,B]$. This implies that $D \leq^* B \leq^* A$, as desired.

    \underline{$\impliedby$:} Suppose that $D \leq^* A$ for all $A \in \U$. Let $\D \subseteq \mathbb{M}_\U$ be dense open. To show that $D$ is generic, it suffices to show that $D \in [r_n(D),B]$ for some $(r_n(G),B) \in \D$. By Lemma \ref{lem:capturing.possible}, let $A := A_\emptyset \in \U$ be such that $(\emptyset,A)$ captures $\D$, and then for each $a \in E^{[<\infty]}$ we let $A_a \in \U$ be such that $(a,A_a)$ captures $\D$. We then inductively define a $\leq$-decreasing sequence $(A_n)_{n<\omega}$ in $\U$ where $A_n \leq A_a$ for all $d_A(a) \leq n$. Since $\U$ is selective, we may let $B \in \U\restrictedto A$ such that $B/a \leq A_{d_A(a)} \leq A_a$ for all $a \in E^{[<\infty]}\restrictedto B$. 

    We have that $D \leq^* B$, so let $a \in E^{[<\infty]}\restrictedto D \cap E^{[<\infty]} \cap B$  be such that $D/a \leq B/a \leq A_a$. Therefore, $(a,B)$ captures $\D$ in the ground model, so $(a,B)$ captures $\D$ in the generic extension (see the remark after Definition \ref{def:capture.dense.set}). Since $D \in [a,B]$, we have that $(r_n(D),B) \in \D$ for some $n$, as desired.
\end{proof}

\subsection{Proof of Theorem \ref{thm:mathias.thm.vector.spaces}}
Recall that for an uncountable cardinal $\kappa$, the forcing poset $\Coll(\omega,<\!\kappa)$ is the L\'{e}vy collapse, which collapses $\kappa$ to $\omega_1$. We shall follow the proof layout presented in \cite{NN18} (for the proof of Theorem 4.1). The following lemma is due to Solovay, and a proof may be found in \cite{solovay}. For the rest of this section, we fix a ground model $\V$ of $\ZFC$, and all vector spaces are over $\mathbb{F}_2$.

\begin{lemma}
\label{lem:solovay.factor.lemma}
    Let $\kappa$ be an inaccessible cardinal, and let $\P \in \V$ be a forcing poset of size $<\kappa$. Let $G$ be a $\Coll(\omega,<\!\kappa)$-generic filter. If $H \in \V[G]$ is $\P$-generic over $\V$, then there exists a filter $G^*$ which is $\Coll(\omega,<\!\kappa)$-generic over $\V[H]$, and $\V[H][G^*] = \V[G]$.
\end{lemma}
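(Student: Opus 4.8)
The statement is Solovay's classical factor lemma for the Lévy collapse, and the plan is to exhibit $\V[G]$ as a $\Coll(\omega,<\!\kappa)$-generic extension of $\V[H]$ by first localizing $H$ to a bounded part of the collapse and then invoking the absorption property of $\Coll$. Write $\mathbb{B} := \mathrm{RO}(\Coll(\omega,<\!\kappa))$, and observe at the outset that $\Coll(\omega,<\!\kappa)$ is absolutely defined (as finite partial functions), so it denotes the \emph{same} poset in $\V$ and in $\V[H]$; moreover $\kappa$ remains inaccessible in $\V[H]$, since $|\P|<\kappa$ forces with the $\kappa$-cc (preserving the regularity of $\kappa$) and $(2^\alpha)^{\V[H]}<\kappa$ for all $\alpha<\kappa$ (preserving strong-limitness). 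To package the hypothesis $H\in\V[G]$, I would fix a name $\dot H$ forcing that $\dot H$ is a $\P$-generic filter over $\V$ and let $e(p)$ be the Boolean value of $\check p\in\dot H$; this is, in $\V$, a complete embedding $e:\mathrm{RO}(\P)\to\mathbb{B}$ with $H=e^{-1}[G]$. By the standard quotient-forcing analysis, $\V[G]$ is then generic over $\V[H]$ for $\mathbb{B}/e[H]$, so it suffices to identify this quotient, computed in $\V[H]$, with $\Coll(\omega,<\!\kappa)$.

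The next step is localization. Since $\kappa$ is a strong limit, $|\mathrm{RO}(\P)|<\kappa$; each value $e(p)$ is the supremum of a maximal antichain of $\mathbb{B}$, which has size $<\kappa$ by the $\kappa$-cc of the collapse, and every condition occurring has finite support in $\kappa$. As $\kappa$ is regular, the union of all these supports over the fewer than $\kappa$ many $p$ is bounded by some $\gamma<\kappa$, which I enlarge so that $|\P|<\gamma$. Using the factorization $\Coll(\omega,<\!\kappa)=\Coll(\omega,<\!\gamma)\times\Coll(\omega,[\gamma,\kappa))$, under which $\mathrm{RO}(\Coll(\omega,<\!\gamma))$ is a complete subalgebra of $\mathbb{B}$, one gets $\ran(e)\subseteq\mathrm{RO}(\Coll(\omega,<\!\gamma))$; hence $e$ is a complete embedding into $\mathrm{RO}(\Coll(\omega,<\!\gamma))$ and $H\in\V[G\restrictedto\gamma]$, where $G\restrictedto\gamma$ and $G\restrictedto[\gamma,\kappa)$ are the induced generics for the two factors.

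The heart of the argument is the absorption step, which I expect to be the main obstacle. Because $|\P|<\gamma$ and $\Coll(\omega,<\!\gamma)$ collapses $|\P|$ to $\omega$, the Kripke--Solovay absorption theorem — that $\P\times\Coll(\omega,\lambda)$ is forcing-equivalent to $\Coll(\omega,\lambda)$ whenever $|\P|\le\lambda$, together with the weak homogeneity of the collapse giving uniqueness up to forcing equivalence among separative atomless posets that make the relevant cardinal countable — shows that in $\V[H]$ the quotient $\mathrm{RO}(\Coll(\omega,<\!\gamma))/e[H]$ is forcing-equivalent to $\Coll(\omega,<\!\gamma)^{\V[H]}$. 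Consequently $\V[G\restrictedto\gamma]=\V[H][K_0]$ for some $K_0$ that is $\Coll(\omega,<\!\gamma)$-generic over $\V[H]$. This is precisely where the structural homogeneity of the Lévy collapse is used, and I would isolate it as a separate lemma, citing \cite{solovay} for the detailed verification.

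Finally I would reassemble. Since $G\restrictedto[\gamma,\kappa)$ is $\Coll(\omega,[\gamma,\kappa))$-generic over $\V[G\restrictedto\gamma]=\V[H][K_0]$, the mutual-genericity lemma for products shows that $G^*:=K_0\times G\restrictedto[\gamma,\kappa)$ is generic over $\V[H]$ for $\Coll(\omega,<\!\gamma)\times\Coll(\omega,[\gamma,\kappa))=\Coll(\omega,<\!\kappa)$. Moreover $\V[H][G^*]=\V[H][K_0][G\restrictedto[\gamma,\kappa)]=\V[G\restrictedto\gamma][G\restrictedto[\gamma,\kappa)]=\V[G]$, which is exactly the desired conclusion.
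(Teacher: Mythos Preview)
Your argument is the standard proof of Solovay's factor lemma and is correct: localize the complete embedding $e:\mathrm{RO}(\P)\to\mathrm{RO}(\Coll(\omega,<\!\kappa))$ below some $\gamma<\kappa$ using the $\kappa$-cc, invoke the Kripke--Solovay absorption theorem to identify the quotient $\mathrm{RO}(\Coll(\omega,<\!\gamma))/e[H]$ with $\Coll(\omega,<\!\gamma)^{\V[H]}$, and then reassemble via the product decomposition. The paper does not supply its own proof of this lemma; it simply attributes the result to Solovay and refers the reader to \cite{solovay}, so there is nothing to compare against beyond noting that your sketch is exactly the classical argument one would find there.
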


\begin{proof}[Proof of Theorem \ref{thm:mathias.thm.vector.spaces}]
    Let $\H \in \V[G]$ be a selective coideal in $E^{[\infty]}$, and let $\X \in \L(\R)^{\V[G]}$ be a subset of $E^{[\infty]}$. Since every set in $\L(\R)^{\V[G]}$ is ordinal-definable from some real in $\V[G]$, there exists a ternary formula $\phi$, some real $r \in \V[G]$ and a sequence of ordinals $\alpha$ such that for all $B \in (E^{[\infty]})^{\V[G]}$:
    \begin{align*}
        B \in \X \iff \V[G] \models \phi[B,r,\alpha].
    \end{align*}
    We need to show that for all $A \in \H$ and $a \in E^{[<\infty]}\restrictedto A$, there exists some $B \in \H\restrictedto[a,A]$ such that $[a,B] \subseteq \X$ or $[a,B] \subseteq \X^c$. We fix some $A \in \H$ and $a \in E^{[<\infty]}\restrictedto A$.

    Since $\kappa$ is Mahlo, we may fix some inaccessible $\lambda < \kappa$ such that $\H \cap \V[G\restrictedto\lambda] \in \V[G\restrictedto\lambda]$, and $\V[G\restrictedto\lambda] \models \H \cap \V[G\restrictedto\lambda]$ is a selective coideal, and denote $\bar{\H} := \H \cap \V[G\restrictedto\lambda]$. By the $\kappa$-chain condition of $\Coll(\omega,<\!\kappa)$, we may also assume that $A \in \bar{\H}$ and $r \in \V[G\restrictedto\lambda]$. Let $\phi^*$ be the formula such that $\phi^*(x,y,w)$ holds iff $\forces_{\Coll(\omega,<\kappa)} \phi(\dot{x},\check{y},\check{w})$. Let:
    \begin{align*}
        \bar{\X} := \{B \in E^{[\infty]} \cap \V[G\restrictedto\lambda] : V[G\restrictedto\lambda] \models \phi^*[B,r,\alpha]\}.
    \end{align*}
    Note that $\bar{\X} \in \V[G\restrictedto\lambda]$. Furthermore, since the L\'{e}vy collapse is homogeneous, we have that for all $B \in E^{[\infty]} \cap \V[G\restrictedto\lambda]$:
    \begin{align*}
        B \in \X &\iff \V[G] \models \phi[B,r,\alpha] \\
        &\iff \forces_{\Coll(\omega,<\kappa)} \phi[B,r,\alpha] \\
        &\iff B \in \bar{\X}.
    \end{align*}
    Thus, $\bar{\X} = \X \cap \V[G\restrictedto\lambda]$. 

    Let $\U$ be a $(\bar{\H},\leq^*)$-generic filter such that $A \in \U$. Note that by Proposition \ref{prop:semiselective.adds.selective.ultrafilter}, $\U$ is a selective ultrafilter. Since $(\bar{\H},\leq^*)$ adds no reals, we have that $E^{[\infty]} \cap \V[G\restrictedto\lambda][\U] = E^{[\infty]} \cap \V[G\restrictedto\lambda]$. This implies that $\bar{\X} = \X \cap \V[G\restrictedto\lambda][\U]$. Furthermore, we have that:
    \begin{align*}
        \V[G\restrictedto\lambda][\U][G\restrictedto[\lambda,\kappa)] = \V[G\restrictedto\lambda][G\restrictedto[\lambda,\kappa)][\U] = \V[G][\U].
    \end{align*}

    \begin{claim}
        Let $B \in E^{[\infty]} \cap \V[G][\U]$ be $\mathbb{M}_\U$-generic over $\V[G\restrictedto\lambda][\U]$. Then:
        \begin{align*}
            B \in \X \iff V[G\restrictedto\lambda][\U][B] \models \phi^*[B,r,\alpha].
        \end{align*}
    \end{claim}

    \begin{midproof}
        We work in $\V[G][\U]$. Since $\kappa$ is inaccessible, $\Coll(\omega,<\!\lambda) * \dot{\mathbb{M}}_\U$ is a poset in $\V$ of size $<\!\kappa$, so by Lemma \ref{lem:solovay.factor.lemma} applied to the model $\V[G\restrictedto\lambda][\U]$ there exists a filter $G^*$, $\Coll(\omega,<\!\kappa)$-generic over $\V[G\restrictedto\lambda][\U][B]$ such that:
        \begin{align*}
            \V[G][\U] = \V[G\restrictedto\lambda][\U][G\restrictedto[\lambda,\kappa)] = \V[G\restrictedto\lambda][\U][B][G^*].
        \end{align*}
        Therefore:
        \begin{align*}
            B \in \X &\iff \V[G][\U] \models \phi[B,r,\alpha] \\
            &\iff \;\forces \phi(\dot{B},\check{r},\check{\alpha}) \text{ in $\V[G\restrictedto\lambda][\U][B]$} \\
            &\iff \V[G\restrictedto\lambda][\U][B] \models \phi^*[B,r,\alpha],
        \end{align*}
        where the second equivalence follows from the homogeneity of the L\'{e}vy collapse.
    \end{midproof}
    
    We consider the forcing notion $\mathbb{M}_\U$ in $\V[G\restrictedto\lambda][\U]$, and note that $(a,A) \in \mathbb{M}_\U$. By the Prikry property of $\mathbb{M}_\U$ (Proposition \ref{prop:semiselective.implies.Prikry}), there exists some condition $(a,A') \leq_0 (a,A)$ in $\mathbb{M}_\U$ which decides the formula $\phi^*(\dot{A}_H,\check{r},\check{\alpha})$, where $\dot{A}_H$ is the canonical name for an $\mathbb{M}_\U$-generic element of $E^{[\infty]}$. Since $\V[G][\U] \models \U$ is countable and $\U$ is an ultrafilter, there exists some $A_H \in [a,A'] \cap \V[G]$ such that $A_H \leq^* B$ for all $B \in \U$. By Proposition \ref{prop:M-generic.characterisation}, $A_H$ is $\mathbb{M}_\U$-generic over $\V[G\restrictedto\lambda][\U]$. We shall finish the proof by showing that $[a,A_H] \subseteq \X$ or $[a,A_H] \subseteq \X^c$.

    Suppose that $(a,A') \forces \phi^*(\dot{A}_H,\check{r},\check{\alpha})$, and let $B \in [a,A_H]$. By the Mathias property of $\mathbb{M}_\U$, $B$ is also $\mathbb{M}_\U$-generic. Since $B \in [a,A']$, the condition $(a,A')$ is in the $\mathbb{M}_\U$-generic filter generated by $B$, so by the previous claim, $B \in \X$. Therefore, $[a,A_H] \subseteq \X$. Alternatively, if $(a,A') \forces \neg\phi^*(\dot{A}_H,\check{r},\check{\alpha})$, then $[a,A_H] \subseteq \X^c$.
\end{proof}

We may now prove Corollary \ref{cor:no.full.mad.family.in.L(R)}. Given an almost disjoint family $\A$ of subspaces, we let:
\begin{align*}
    \bar{\A} := \{B \in E^{[\infty]} : \c{B} \subseteq V \text{ for some } V \in \A\}.
\end{align*}
Observe that $\H^-(\A) \cap \bar{\A} = \emptyset$.

\begin{proof}[Proof of Corollary \ref{cor:no.full.mad.family.in.L(R)}]
    Suppose for a contradiction that $\A \in \L(\R)^{\V[G]}$ is a full mad family of subspaces. Note that $\bar{\A} \in \L(\R)^{\V[G]}$. By Proposition \ref{prop:H^-(A).is.selective}, $\H(\A)$ is a selective coideal, so by Theorem \ref{thm:mathias.thm.vector.spaces} we have that $\A$ is $\H$-Ramsey. Let $B \in \H(\A)$, and let $C \in \H(\A)\restrictedto A$ be such that $[\emptyset,C] \subseteq \bar{\A}$ or $[\emptyset,C] \subseteq \bar{\A}^c$. Since $C \in \H(\A)$ and $\H(\A) \cap \bar{\A} = \emptyset$, the first case is not possible. On the other hand, since $\A$ is mad, there exists some $V \in \A$ which is compatible with $C$, i.e. there exists some $D \in E^{[\infty]}$ such that $D \leq C$ and $D \leq A$. Then $D \in [\emptyset,C] \cap \bar{\A}$, another contradiction.
\end{proof}

We conclude this section by addressing the difficulty in removing the additional large cardinal hypothesis on $\kappa$. In \cite{NN18}, Neeman--Norwood successfully reduced the required large cardinal strength of $\kappa$ in the original Mathias' theorem \cite{M77} from being Mahlo to just inaccessible, with the additional requirement that the selective coideal $\H$ lies in $\L(\R)^{\V[G]}$. Their approach is as follows:
\begin{enumerate}
    \item For an ultrafilter $\U$ on $\omega$, they defined the \emph{diagonalisation forcing} $\P_\U$, and show that $\P_\U$ satisfies the Prikry and Mathias properties (where $\U$ need not be selective).
    
    \item Let $\Coll(\omega,<\!\kappa)$ be the L\'{e}vy collapse of an inaccessible cardinal, and let $G$ be a $\Coll(\omega,<\!\kappa)$-generic filter. There exists some $\beta < \kappa$ such that $\bar{\H} = \H \cap \V[G\restrictedto\beta] \in \V[G\restrictedto\beta]$. $\bar{\H}$ is a coideal on $\omega$, but it may not be selective. Let $\U \subseteq \bar{\H}$ be any ultrafilter on $\omega$ in $\V[G\restrictedto\beta]$.
    
    \item Repeat the argument of Mathias with Mathias forcing $\mathbb{M}_\U$ replaced by diagonalisation forcing $\P_\U$.
\end{enumerate}
Using the definition of ultrafilter for infinite block sequences (see Definition \ref{def:filter}), it is possible to define a version of diagonalisation forcing for infinite block sequences, and it does satisfy the Prikry and Mathias properties. The issue lies in the second step --- if $\H$ is a selective coideal in $E^{[\infty]}$ in $\V[G]$ (or $\L[G]^{\V[G]}$), it is unclear if $\bar{\H} = \H \cap \V[G\restrictedto\beta]$ is a coideal in $E^{[\infty]}$. It is even less clear if we may obtain an ultrafilter $\U \subseteq \bar{\H}$ in $\V[G\restrictedto\beta]$ --- the only current known construction of ultrafilters in $E^{[\infty]}$ assumes set theoretic hypotheses such as $\CH$ or $\MA$, and define an ultrafilter as the $\leq$-upward closure of a $\leq^*$-decreasing chain of block sequences of size continuum (see Theorem 5.3 of \cite{S18} and \cite{B87}). This requires one to repeatedly obtain weak diagonalisations (as in Definition \ref{def:(p)-semicoideal}) at countable limit ordinal stages of the induction. However, as $\bar{\H}$ need not satisfy the (p)-property, this construction may not be performed inside $\bar{\H}$ to obtain an ultrafilter $\U \subseteq \bar{\H}$. 

\section{Further remarks and open problems}
\label{sec:conclusion}
Although Theorem \ref{thm:main.full.mad.family} asserts the existence of a full mad family of subspaces in $\ZFC$, it remains unknown if every mad family of subspaces in $\ZFC$ is full.

\begin{prob}
    Is there a non-full mad family of subspaces?
\end{prob}

While there are various results about the cardinal $\avec$, the cardinal $\avec^*$ is not well-understood. In particular, while it is known that $\non(\M) \leq \avec$ holds in $\ZFC$, the $\avec^*$ variant of the problem remains unsolved.

\begin{prob}
    Is $\non(\M) \leq \avec^*$ true in $\ZFC$?
\end{prob}

Most of the current approaches to the non-existence of mad families in $[\omega]^\omega$ employ some Ramsey theoretic techniques (such as \cite{M77} \cite{NN18} \cite{ST19}). Consequently, it is difficult to use these approaches to prove the non-existence of mad families of block subspaces over $\mathbb{F} \neq \mathbb{F}_2$. Perhaps the approach highlighted in \cite{T18} may be used to answer the following question:

\begin{prob}
    Let $\kappa$ be an inaccessible cardinal, and let $G$ be $\Coll(\omega,<\!\kappa)$-generic. Are there mad families of subspaces (over any countable field) in $\L(\R)^{\V[G]}$? What if $\kappa$ is Mahlo?
\end{prob}

\section*{Acknowledgements}
The author would like to express his deepest gratitude to Spencer Unger and Iian Smythe for their helpful comments, suggestions and corrections.

\printbibliography[heading=bibintoc,title={References}]

@article{S19,
    title={Madness in vector spaces},
    volume={84},
    number={4},
    journal={The Journal of Symbolic Logic},
    publisher={Cambridge University Press},
    author={Iian Smythe},
    year={2019},
    pages={1590-–1611}
}

@article{S23,
    title={Filters on a countable vector space},
    volume={260},
    number={1},
    journal={Fundamenta Mathematicae},
    author={Iian Smythe},
    year={2023},
    pages={41–-58}
}

@article{Sh11,
    title={MAD Saturated Families and SANE Player},
    volume={63},
    number={6},
    journal={Canadian Journal of Mathematics},
    author={Saharon Shelah},
    year={2011},
    pages={1416–-1435}
}

@article{HS22,
    title = {On the definability of mad families of vector spaces},
    journal = {Annals of Pure and Applied Logic},
    volume = {173},
    number = {4},
    pages = {103079},
    year = {2022},
    issn = {0168--0072},
    author = {Haim Horowitz and Saharon Shelah}
}

@misc{Y24,
    title={Weak A2 spaces, the Kastanas game and strategically Ramsey sets}, 
    author={Clement Yung},
    year={2024},
    eprint={2410.00200},
    archivePrefix={arXiv},
    primaryClass={math.LO} 
}

@book{T10,
  title = {Introduction to Ramsey Spaces},
  author = {Stevo Todor\v{c}evi\'{c}},
  year = {2010},
  volume = {174},
  series = {Annals of Mathematics Studies},
  publisher = {Princeton University Press}
}

@article{NN18,
    title={Happy and mad families in $L(\R)$},
    volume={83},
    number={2},
    journal={The Journal of Symbolic Logic},
    author={Itay Neeman and Zach Norwood},
    year={2018},
    pages={572-–597}
}

@article{DMN15,
	author = {Carlos {Di Prisco} and Jos\'{e} G. Mijares and Jes\'{u}s Nieto},
	journal = {Mathematical Logic Quarterly},
	number = {5},
	pages = {384--396},
	publisher = {Wiley},
	title = {Local Ramsey Theory: An Abstract Approach},
	volume = {63},
	year = {2017}
}

@article{S18,
    author = {Iian Smythe},
    journal = {Transactions of the American Mathematical Society},
    number = {12},
    pages = {pp. 8859--8893},
    publisher = {American Mathematical Society},
    title = {A local Ramsey theory for block sequences},
    volume = {370},
    year = {2018}
}

@article{M77,
    title = {Happy families},
    journal = {Annals of Mathematical Logic},
    volume = {12},
    number = {1},
    pages = {59--111},
    year = {1977},
    author = {Adrian Mathias}
}

@article{G21,
    author = {Osvaldo Guzm\'{a}n},
    title = {On Completely Separable MAD Families},
    journal = {RIMS Kokyuroku},
    year = {2021},
    number = {2198},
    pages = {7--28}
}

@article{R10,
    title={An exact Ramsey principle for block sequences},
    author={Christian Rosendal},
    journal={Collectanea mathematica},
    year={2010},
    volume={61},
    number={10},
    pages={25--36}
}

@article{R09,
    author = {Christian Rosendal},
    journal = {Annales de l’institut Fourier},
    number = {4},
    pages = {1359-1384},
    title = {Infinite asymptotic games},
    volume = {59},
    year = {2009},
}

@article{KW96,
    author = {Athanassios Kamburelis Kamburelis and Bogdan Weglorz},
    journal = {Archive for Mathematical Logic},
    number = {4},
    pages = {263--277},
    title = {Splittings},
    volume = {35},
    year = {1996}
}

@misc{u25,
    title = {Almost disjoint family of vector subspaces},
    author = {user527492 (https://mathoverflow.net/users/544688/user527492)},
    howpublished = {MathOverflow},
    url = {https://mathoverflow.net/q/487733}
}

@article{K00,
    author = {Oren Kolman},
    journal = {The Electronic Journal of Linear Algebra},
    pages = {41--52},
    title = {Almost disjoint families: An application to linear algebra},
    volume = {7},
    year = {2000}
}

@article{MRS14,
    title={Splitting Families and Complete Separability},
    volume={57},
    number={1},
    journal={Canadian Mathematical Bulletin},
    author={Heike Mildenberger and Dilip Raghavan and Juris Stepr\={a}ns},
    year={2014},
    pages={119–-124}
}

@article{B87,
    title={Ultrafilters related to Hindman's finite-unions theorem and its extensions},
    author={Andreas Blass},
    year={1987},
    journal={Contemporary Mathematics},
    volume={65},
    pages={89--124}
}

@article{F98,
    author = {Ilijas Farah},
    year = {1998},
    pages = {79--103},
    title = {Semiselective coideals},
    volume = {45},
    journal = {Mathematika}
}

@article{solovay,
    author = "Robert M. Solovay",
    title = "A model of set-theory in which every set of reals is Lebesgue measurable",
    journal = "Annals of Mathematics",
    series = "Second Series",
    year =  "1970",
    pages={1--56}
}

@book{H22,
    title={Combinatorial Set Theory: With a Gentle Introduction to Forcing},
    author={Lorenz J. Halbeisen},
    isbn={978-3-319-60230-1},
    edition={2},
    doi={10.1007/978-3-319-60231-8},
    publisher={Springer International Publishing},
    year={2022}
}

@article{T18,
    title = {Definability and almost disjoint families},
    journal = {Advances in Mathematics},
    volume = {330},
    pages = {61--73},
    year = {2018},
    issn = {0001-8708},
    author = {Asger T\"{o}rnquist},
}

@article{ST19,
    author = {David Schrittesser and Asger T\"{o}rnquist},
    title = {The Ramsey property implies no mad families},
    journal = {Proceedings of the National Academy of Sciences},
    volume = {116},
    number = {38},
    pages = {18883--18887},
    year = {2019},
}

@Inbook{blasshandbook,
    author="Andreas Blass",
    editor="Matthew Foreman
    and Akihiro Kanamori",
    title="Combinatorial Cardinal Characteristics of the Continuum",
    bookTitle="Handbook of Set Theory",
    year="2010",
    publisher="Springer Netherlands",
    address="Dordrecht",
    pages="395--489",
    isbn="978-1-4020-5764-9",
}

@article{Rag10,
    author = {Dilip Raghavan},
    journal = {Transactions of the American Mathematical Society},
    number = {11},
    pages = {5879--5891},
    publisher = {American Mathematical Society},
    title = {There is a Van Douwen MAD family},
    volume = {362},
    year = {2010}
}

@article{Rag09,
    author = {Dilip Raghavan},
    journal = {Fundamenta Mathematicae},
    number = {3},
    pages = {241--282},
    title = {Maximal almost disjoint families of functions},
    volume = {204},
    year = {2009}
}

@article{B88,
	author = {Andreas Blass},
	journal = {Annals of Pure and Applied Logic},
	number = {3},
	pages = {215--255},
	publisher = {Elsevier},
	title = {Selective Ultrafilters and Homogeneity},
	volume = {38},
	year = {1988}
}

@article{B11,
    author = {Tristan Bice},
    year = {2011},
    pages = {791--801},
    title = {MAD Families of Projections on $l^2$ and Real-Valued Functions on $\omega$},
    volume = {50},
    journal = {Archive for Mathematical Logic},
    number = {7--8}
}

@article{W08,
    author = {Eric Wofsey},
    journal = {Proceedings of the American Mathematical Society},
    number = {2},
    pages = {719--726},
    publisher = {American Mathematical Society},
    title = {$P(\omega)/\mathrm{fin}$ and Projections in the Calkin Algebra},
    volume = {136},
    year = {2008}
}

@article{SS11,
    author = {Saharon Shelah and Juris Stepr{\={a}}ns},
    title = {Masas in the Calkin algebra without the continuum hypothesis},
    journal = {Journal of Applied Analysis},
    volume = {17},
    number = {1},
    year = {2011},
    pages = {69--89}
}

@article{BS91, 
    title={Independence and consistency proofs in quadratic form theory}, 
    volume={56}, 
    number={4}, 
    journal={Journal of Symbolic Logic}, 
    author={James E. Baumgartner and Otmar Spinas}, 
    year={1991}, 
    pages={1195--1211}
}

\end{document}